\documentclass{birkjour}

\usepackage[utf8]{inputenc}
\usepackage[T1]{fontenc}
\usepackage[english]{babel}
\usepackage{mathtools,amsmath,amssymb,amsfonts,amsthm,mathrsfs}
\mathtoolsset{showonlyrefs}
\usepackage{enumitem}

\usepackage{geometry}
\geometry{left=13.4mm, right=13.4mm}

\usepackage{graphicx,color}
\usepackage{tikz,pgfplots,pgf}
\usepackage[font=small]{caption}

\usepackage[bookmarksnumbered,colorlinks=true,urlcolor=black,linkcolor=black,citecolor=black,%pagebackref,
bookmarks, breaklinks]{hyperref}
\makeatletter
\newcommand{\leqnomode}{\tagsleft@true\let\veqno\@@leqno}
\makeatother

\newtheorem{theorem}{Theorem}[section]
\newtheorem{lemma}{Lemma}[section]
\newtheorem{proposition}{Proposition}[section]

\theoremstyle{definition}

\theoremstyle{definition}
\newtheorem{remark}{Remark}[section]

\numberwithin{equation}{section}

\begin{document}

\title[Periodic solutions to superlinear indefinite planar systems]{Periodic solutions to \\superlinear indefinite planar systems: \\ a topological degree approach}

\author[G.~Feltrin]{Guglielmo Feltrin}

\address{
Department of Mathematics, Computer Science and Physics, University of Udine\\
Via delle Scienze 206, 33100 Udine, Italy}

\email{guglielmo.feltrin@uniud.it}

\author[J.~C.~Sampedro]{Juan Carlos Sampedro}

\address{
Institute of Interdisciplinary Mathematics,\\
Department of Mathematical Analysis and Applied Mathematics, \\
Complutense University of Madrid,\\
Plaza de las Ciencias 3, 28040 Madrid, Spain}

\email{juancsam@ucm.es}

\author[F.~Zanolin]{Fabio Zanolin}

\address{
Department of Mathematics, Computer Science and Physics, University of Udine\\
Via delle Scienze 206, 33100 Udine, Italy}

\email{fabio.zanolin@uniud.it}

\thanks{Work performed under the auspices of the Grup\-po Na\-zio\-na\-le per l'Anali\-si Ma\-te\-ma\-ti\-ca, la Pro\-ba\-bi\-li\-t\`{a} e le lo\-ro Appli\-ca\-zio\-ni (GNAMPA) of the Isti\-tu\-to Na\-zio\-na\-le di Al\-ta Ma\-te\-ma\-ti\-ca (INdAM). The first author is grateful to Prof.~Juli\'{a}n L\'{o}pez-G\'{o}mez and to the Complutense University of Madrid for the kind hospitality during the period in which part of the work has been written. The second author is grateful to the other authors and to the University of Udine for allowing him to perform his predoctoral stay there. He has been supported by the grants PRE2019\_1\_0220 and EP\_2022\_1\_0063 of the Basque Country Government. The results have been presented by the third author in the ``International Workshop on Nonlinear Differential Problems'', Taormina-Giardini Naxos, September 19--22, 2022.\\
\textbf{Preprint -- November 2022}}

\subjclass{34B15, 34B18, 34C25, 47H11}

\keywords{Periodic problem, Neumann problem, planar system, indefinite weight, positive solutions, superlinear nonlinearity, coincidence degree}

\dedicatory{}

\date{}

\begin{abstract}
We deal with a planar differential system of the form
\begin{equation*}
\begin{cases}
\, u' = h(t,v),
\\
\, v' = - \lambda a(t) g(u),
\end{cases}
\end{equation*}
where $h$ is $T$-periodic in the first variable and strictly increasing in the second variable, $\lambda>0$, $a$ is a sign-changing $T$-periodic weight function and $g$ is superlinear. Based on the coincidence degree theory, in dependence of $\lambda$, we prove the existence of $T$-periodic solutions $(u,v)$ such that $u(t)>0$ for all $t\in\mathbb{R}$. Our results generalize and unify previous contributions about Butler's problem on positive periodic solutions for second-order differential equations (involving linear or $\phi$-Laplacian-type differential operators).
\end{abstract}

\maketitle

\section{Introduction}\label{section-1}

In this paper, we investigate the problem of existence of $T$-periodic solutions $(u,v)$, with $u(t)>0$ for all $t\in\mathbb{R}$, of the planar system
\begin{equation}\label{syst-intro-k}
\begin{cases}
\, u' = h(t,v),
\\
\, v' = k(t,u),
\end{cases}
\end{equation}
where $h,k\colon\mathbb{R}\times\mathbb{R}\to\mathbb{R}$ are $T$-periodic functions in the first variable.

The main motivation for this problem comes from the study of positive $T$-periodic solutions for the strongly nonlinear scalar differential equation
\begin{equation}\label{eq-LF}
\mathscr{L} u = \mathscr{F}(t,u),
\end{equation}
where $\mathscr{L}$ is a nonlinear differential operator of the form $\mathscr{L} u := -(\Phi(t,u'))'$ and $\mathscr{F}$ is $T$-periodic in the first variable. When $\Phi(t,s)=\xi$ is invertible to $s = h(t,\xi)$, then we can rewrite equation \eqref{eq-LF} equivalently as
\begin{equation}\label{syst-LF}
\begin{cases}
\, u' = h(t,v),
\\
\, v' = - \mathscr{F}(t,u).
\end{cases}
\end{equation}
In this article we are interested in nonlinearities of the form
\begin{equation}\label{eq-ag-intro}
\mathscr{F}(t,u)=a(t)g(u),
\end{equation}
where $a\colon\mathbb{R}\to\mathbb{R}$ is a $T$-periodic function and $g\colon\mathopen{[}0,+\infty\mathclose{[}\to\mathopen{[}0,+\infty\mathclose{[}$ is such that
\begin{equation*}
g(0)=0, \qquad g(s)>0, \; \text{ for every $s\in\mathopen{]}0,+\infty\mathclose{[}$.}
\end{equation*}
If we look for non-trivial $T$-periodic solutions $(u,v)$ of \eqref{syst-LF}, then 
\begin{equation*}
\int_{0}^{T} \mathscr{F}(t,u(t))\,\mathrm{d}t=0
\end{equation*}
and thus in the special case \eqref{eq-ag-intro} a necessary condition for the existence of a $T$-periodic solution $(u,v)$ of \eqref{syst-LF} which is positive in the $u$-component is that the weight $a$ changes its sign.
Accordingly to a terminology which is of common use after \cite{HeKa-1980}, the associated periodic boundary value problem enters in the class of \textit{indefinite problems}. In the last decades this kind of differential problems has been widely studied, both in the ODE and in the PDE setting. 
Concerning the case of second-order differential equations with $\mathscr{L}(u)=-u''$ (or $\mathscr{L}(u)=-\Delta u$) we refer the reader to the classical papers \cite{AlTa-1996, AmLG-1998, BaPoTe-1988, BeCDNi-1994, BeCDNi-1995,GRLG-2000,LG-1997,LG-2000}, till to very recent contributions \cite{ClSz-2022, FeGi-2020, FeSoTe-2022, FeLG-2022, KaRQUm-2021, LGOm-2022, MaReZa-2021,Te-2020} and the references therein; see also \cite{Fe-2018} for a wide bibliography on the subject.

One of the first attempts in the study of the periodic solutions to the second-order superlinear equation with indefinite weight $-u'' = a(t) g(u)$ was given by G.~J.~Butler in \cite{Bu-1976}. 
Besides proving the existence of infinitely many $T$-periodic solutions which oscillate, Butler also raised the question of the existence of non-oscillatory/positive $T$-periodic solutions when $g(s)=|s|^{\alpha}\mathrm{sign}(s)$ for $\alpha>1$ and $\int_{0}^{T} a(t)\,\mathrm{d}t<0$.
Such a problem was solved in the affirmative in \cite{FeZa-2015} using a topological degree approach, namely Mawhin's coincidence degree. Previous solutions to Butler's conjecture for the Neumann problem can be found in \cite{BeCDNi-1994,BeCDNi-1995}.

There is a large literature concerning the existence of nontrivial solutions to superlinear Hamiltonian systems with sign-changing coefficients, see for instance \cite{BoLiSc-2017,TaWu-2004} and the references therein. However, fewer results about positive solutions are available for equations generalizing Butler's model. Other works in this area, like \cite{GKW-2008,To-2003}, provided the existence of positive periodic solutions for nonlinear perturbation of the linear differential operator $\mathscr{L}(u)=-u''+\rho(t)u$, by exploiting the properties of the associated Green's function and fixed point theorems on positive cones of Banach spaces. However, we stress that for the Butler's example the geometry is completely different.

The aim of this article is to show the effectiveness of the coincidence degree approach to study the planar system \eqref{syst-intro-k} in order to generalize and unify some previous results to a wide class of quasi-linear ODEs of the form \eqref{eq-LF}.
In doing so, we will also introduce a new concept of \textit{superlinearity} at zero and at infinity for planar systems which generalizes the previously known conditions when $\mathscr{L}$ is a $\phi$-Laplacian operator.

When \eqref{eq-LF} takes the form
\begin{equation*}
-(\phi(u'))'=\mathscr{F}(t,u),
\end{equation*}
where $\phi\colon I \to J$ is an increasing homeomorphism between two open intervals $I$ and $J$,
a topological framework has been proposed in \cite{BeDOMe-2006,BeMa-2007,GHMaZa-1993,MaMa-1998}.
This requires a preliminary analysis of the strongly nonlinear equation $-(\phi(u'))'= h(t)$ in suitable function spaces.
Here, we present a related but slightly different approach which consists in inverting $\phi$ on the real line and subsequently in studying the associated planar system \eqref{syst-LF} for $h(t,v):=\phi^{-1}(v)$.
Clearly, in this manner, we can set in the framework of system \eqref{syst-LF} also more general nonlinear differential operators of $\phi$-Laplacian type.
Such an approach was already considered in \cite{FeZa-2017} and, more recently, in \cite{DoZa-2020,MaReZa-2021} with respect to the study of oscillatory solutions, and in \cite{FoKlSf-2021,FoTo-2021} to generalize the concept of upper and lower solutions in the frame of planar systems.

Following the above premises and motivations, we are lead now to study the problem of positive $T$-periodic solutions to the planar system \eqref{syst-LF} with $h(t,\cdot)$ a strictly increasing function (as it will be the inverse of a suitable homeomorphism $\phi$). More precisely, recalling \eqref{eq-LF} and \eqref{eq-ag-intro}, we consider the parameter-dependent system
\begin{equation}\label{syst-intro}
\begin{cases}
\, u' = h(t,v),
\\
\, v' = -\lambda a(t)g(u).
\end{cases}
\end{equation}
The choice to introduce a parameter $\lambda>0$ is made convenient in order to present some general results for a broader class of $h$ and $g$. This will be clear from the statement of the main results. Moreover, it will be also useful to embed \eqref{syst-intro} into a system of the form
\begin{equation}\label{syst-intro-f}
\begin{cases}
\, u' = h(t,v),
\\
\, v' = - f(t,u),
\end{cases}
\end{equation}
with $f(t,u)=\lambda a(t)g(u)$ for $u\geq0$ and $f(t,u)<0$ for $u<0$.

The strategy to find the periodic solutions $(u,v)$ of \eqref{syst-intro} with $u>0$ is based on the following steps.
\begin{enumerate}[leftmargin=65pt,labelsep=8pt]
\item[\textit{First step.}] Transform the periodic problem associated with system \eqref{syst-intro-f} into an equivalent fixed point problem in function spaces.
\item[\textit{Second step.}] Prove that there is a nonzero degree on a small ball centered at the origin.
\item[\textit{Third step.}] Prove that the degree is zero on a larger ball centered at the origin.
\item[\textit{Forth step.}] Prove that a non-trivial $T$-periodic solution $(u,v)$ of system \eqref{syst-intro-f} is positive in the $u$-component.
\end{enumerate}

The first step is provided by an application of coincidence degree theory, by transforming the $T$-periodic problem for system \eqref{syst-intro-f} into an equivalent operator equation
\begin{equation*}
L z = N z, \quad z=(u,v)\in X,
\end{equation*}
where $X=\mathcal{C}_{T}$ is the Banach space of continuous $T$-periodic functions with range in $\mathbb{R}^{2}$. See Section~\ref{section-3} for the details.

The second and the third step follow a classical technique introduced by Nussbaum \cite{Nu-1975}, see also \cite{Am-1976, deF-1982}, which is commonly used in the search of positive solutions using the Krasnosel'skii--Amann approach of nonlinear operators defined in positive cones in Banach spaces.
In our case we do not apply the theory of positive operators, since it is sufficient to produce a non-trivial $T$-periodic solution $(u,v)$. 
In more detail, for the second step we rely on an approach similar to the one considered in \cite{FeZa-2015} which requires some mild extra assumptions on $g$ in a right neighborhood of the origin (like smoothness or a condition of ``slowly oscillation''). We also need to introduce a condition of ``superlinear/below linear'' growth at zero which extends the classical hypothesis $g(s)/s\to 0$ as $s\to 0^{+}$.
More specifically, our assumption reads as
\begin{equation}\label{main-cond-zero-INTRO}
\lim_{s\to 0^{+}} \dfrac{h(t, K g(s))}{s} = 0,
\quad \text{uniformly in $t$, for every $K\in\mathbb{R}$.}
\end{equation}
The third step appears to be more delicate from the technical point of view because, due to the general nature of $h$, some eigenvalue-type estimates obtained in \cite{FeZa-2015} are no more available. Nonetheless we can provide the required estimates into two different cases: $\lambda>0$ large (this allows to cover also the case in which $h$ is bounded) and $\lambda>0$ arbitrary, by adding a new condition of superlinear growth at infinity, which extends the classical hypothesis $g(s)/s\to +\infty$ as $s\to +\infty$. Such new condition reads as
\begin{equation}\label{main-cond-infinity-INTRO}
\lim_{s\to+\infty} \mathrm{sign}(K) \frac{h(t,Kg(s))}{s} = +\infty, 
\quad \text{uniformly in $t$, for every $K\in\mathbb{R}\setminus\{0\}$.}
\end{equation}
The abstract degree framework for the second and third steps is illustrated in Section~\ref{section-3}, while the technical estimates are given in Section~\ref{section-4}.

As a consequence of the second and third step, and the additivity property of the degree, we deduce that the degree is non zero on an annular domain, thus a non-trivial $T$-periodic solution $(u,v)$ of the planar system \eqref{syst-intro-f} exists.

Finally, the crucial step is the last one, which requires to check that $u(t)>0$ for all $t$. For this we need a maximum principle for \eqref{syst-intro-f} suitably adapted to the case of periodic boundary value problem. The weak form of the maximum principle (leading to $u(t)\geq0$ for all $t\in\mathbb{R}$) is almost obvious, while the strong form requires a more delicate analysis.
We propose two versions of such strong maximum principle: one which fits with the case $a\in L^{1}$ and extends to system \eqref{syst-intro-f} a previous result from \cite{MaNjZa-1995} (see Proposition~\ref{strong-max-p}), and a second sharper version (along the line of \cite{PuSe-2007} and \cite{LGOm-2023}) which deals better with $a\in L^{\infty}$ (see Proposition~\ref{strong-max-p-2}). We believe that such results may have some independent interest also with respect to possible future applications to different boundary value problems for planar systems. However, we have collected these results in the appendix since they appear to be of auxiliary type and are not related to the main core of the present paper (see also Section~\ref{section-2.1}).

As a consequence of the above procedure, we can state and prove the following result.

\begin{theorem}\label{th-main-INTRO}
Let $h\colon\mathbb{R}\times\mathbb{R}\to\mathbb{R}$ be a continuous function, $T$-periodic in the first variable, with $h(t,0)\equiv0$, and such that $h(t,\cdot)$ is strictly monotone increasing.
Let $a\colon\mathbb{R}\to\mathbb{R}$ be a sign-changing (piecewise) continuous $T$-periodic function with a finite number of zeros such that $\int_{0}^{T} a(t)\,\mathrm{d}t<0$. Let $g\colon\mathopen{[}0,+\infty\mathclose{[}\to\mathopen{[}0,+\infty\mathclose{[}$ be a continuous function, regularly oscillating at zero, with $g(0)=0$, $g(s)>0$ for $s>0$.
Then, the following conclusions hold.
\begin{itemize}
\item[(i)] If the pair $(h,g)$ satisfies \eqref{main-cond-zero-INTRO}, then system \eqref{syst-intro} has at least one $T$-periodic solution $(u,v)$ with $u>0$, provided that $\lambda>0$ is sufficiently large.
\item[(ii)] If the pair $(h,g)$ satisfies \eqref{main-cond-zero-INTRO} and \eqref{main-cond-infinity-INTRO}, then system \eqref{syst-intro} has at least one $T$-periodic solution $(u,v)$ with $u>0$, for every $\lambda>0$.
\end{itemize}
\end{theorem}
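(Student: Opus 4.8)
The plan is to realise the four-step scheme announced above through Mawhin's coincidence degree, reducing the whole argument to two degree computations plus a positivity step. \emph{First}, I would recast the $T$-periodic problem for the extended system \eqref{syst-intro-f} as a coincidence equation $Lz=Nz$ in $X=\mathcal{C}_{T}$, where $z=(u,v)$, $Lz:=z'$ on the subspace of $C^{1}$ $T$-periodic functions, and $Nz:=(h(\cdot,v),-f(\cdot,u))$. Since $\ker L=\mathbb{R}^{2}$ (the constants) and $\operatorname{Im}L=\{w:\int_{0}^{T}w=0\}$ has codimension two, $L$ is a Fredholm operator of index zero, and $N$ is $L$-completely continuous by the continuity of $h$ and $f$ together with the compactness of the right inverse of $L$ (via Arzelà–Ascoli). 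Hence the coincidence degree $D_{L}(L-N,\Omega)$ is well defined for every bounded open $\Omega$ with $Lz\neq Nz$ on $\partial\Omega$.

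\emph{Second}, to obtain a nonzero degree on a small ball $B_{r}$ I would exploit the below-linear behaviour at zero \eqref{main-cond-zero-INTRO}. Following the Krasnosel'skii--Amann--Nussbaum technique \cite{Nu-1975}, I would show that the homotopy $Lz=\vartheta Nz$, $\vartheta\in\mathopen{[}0,1\mathclose{]}$, has no nontrivial solution on $\partial B_{r}$ once $r$ is small: condition \eqref{main-cond-zero-INTRO} makes the $u$-equation dominate near the origin, so that $\|z\|=r$ is incompatible with the coupled estimates. Regular oscillation of $g$ at zero is what makes the comparison with $s\mapsto h(t,Kg(s))$ uniform. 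Homotopy invariance then reduces the computation to a simple map on $\ker L$ and yields $D_{L}(L-N,B_{r})=1$.

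\emph{Third} --- the step I expect to be the main obstacle --- I would prove $D_{L}(L-N,B_{R})=0$ for $R$ large. Fixing a suitable $p$ (for instance $p=(0,1)$, a constant in the $v$-direction) I would study the family $Lz=Nz+\alpha p$, $\alpha\geq0$, and establish: (a) an a priori bound $\|z\|\leq R$ holding uniformly for $\alpha$ in a bounded range; and (b) the nonexistence of solutions for $\alpha$ large, which follows by integrating the $v$-equation over a period and contradicting the forced mean value $\int_{0}^{T}f(t,u)=\alpha T$. Combining (a) and (b) with homotopy invariance gives the vanishing degree. Part (a) is exactly the delicate point flagged in the Introduction: since $h$ is general, the eigenvalue-type estimates of \cite{FeZa-2015} are unavailable, so the bound must come either from $\lambda$ large --- which lets one absorb the contribution of $h$, even when $h$ is bounded, and gives conclusion (i) --- or from the superlinear growth at infinity \eqref{main-cond-infinity-INTRO}, which forces $u$ to stay bounded for every $\lambda>0$ because $\operatorname{sign}(K)\,h(t,Kg(s))/s\to+\infty$, and gives conclusion (ii).

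\emph{Fourth}, by excision and additivity $D_{L}(L-N,B_{R}\setminus\overline{B_{r}})=0-1\neq0$, so \eqref{syst-intro-f} admits a nontrivial $T$-periodic solution $(u,v)$ in the annulus. It then remains to check that $u>0$. An elementary sign/integration argument gives the weak maximum principle $u\geq0$, so that $(u,v)$ actually solves the original system \eqref{syst-intro}. Finally, the strong maximum principles of Propositions~\ref{strong-max-p} and~\ref{strong-max-p-2} upgrade this to $u(t)>0$ for all $t$: were $u$ to vanish at some point it would vanish identically, forcing $v$ to be constant and hence $(u,v)$ trivial, against the nontriviality secured by the degree. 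This closes the scheme.
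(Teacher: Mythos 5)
Your overall architecture (coincidence degree in $\mathcal{C}_T$, nonzero degree on a small set via the homotopy $Lz=\vartheta Nz$ and Mawhin's reduction to $\ker L$, zero degree on a large set via Lemma~\ref{lemma-2.2}, then weak/strong maximum principles) is exactly the paper's strategy, and steps one, two and four are essentially correct sketches of it (the small-ball degree is $-1$ rather than $+1$ in the paper's conventions, but that sign is immaterial for the excision argument). The genuine gap is in your third step: the choice of the constant perturbation $p=(0,1)$, i.e.\ the family $v'=-f(t,u)+\alpha$, breaks the machinery in two places. First, the weak maximum principle (Proposition~\ref{weak-max-p}) no longer applies: it needs $v'<0$ wherever $u<0$, but $-f(t,s)+\alpha=s+\alpha$ is positive for $s\in\mathopen{]}-\alpha,0\mathclose{[}$, so you cannot reduce to solutions with $u\geq 0$. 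Second, and more seriously, your item (a) --- the a priori bound on $\partial\Omega$ uniform in $\alpha$ on a bounded range --- is precisely the content of Theorems~\ref{large-solution-1} and~\ref{large-solution-2}, and their proofs require the perturbation to be $-\alpha(0,w)$ with $w\geq 0$ \emph{supported in the positivity set} $\bigcup_{n}J_{n}$ of $a$ (hypothesis \ref{cond-a-star}). That support condition is what lets one re-run Lemma~\ref{lem-maximum} for the perturbed system and localize the maximum point of $u$ in some $J_{n}$, after which the decay estimates (driven by $A^{*}(\delta)>0$ and the largeness of $\lambda$, resp.\ by \eqref{main-cond-infinity-INTRO}) produce the contradiction. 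With your perturbation the term $+\alpha(t-t^{*})$ enters the estimate $v(t)=v(t^{*})-\lambda\int_{t^{*}}^{t}a\,g(u)+\alpha(t-t^{*})$ with the wrong sign and can cancel the negative contribution that forces $u$ to drop; flipping to $p=(0,-1)$ repairs the weak maximum principle but still ruins the maximum-localization step, because the perturbation then acts on the intervals where $a\leq 0$, where Lemma~\ref{lem-maximum}'s argument needs $v'\geq 0$.

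Two smaller points. The integration argument you give for nonexistence at large $\alpha$ is fine, but note that the exceptional set for the degree-zero computation cannot be a ball $B_{R}$: the a priori estimates only pin down $\|u\|_{\infty}$, and $\|v\|_{\infty}$ must then be bounded \emph{a posteriori} by $\lambda\|a\|_{L^{1}}\overline{g}(R)+\alpha_{0}\|w\|_{L^{1}}$, which forces the rectangle $\Omega_{R,R'}=\{\|u\|_{\infty}<R,\ \|v\|_{\infty}<R'\}$ with $R'$ possibly much larger than $R$ (and $R'>r_{0}$, so that $\Omega_{r_{0}}\subseteq\Omega_{R,R'}$ for the excision). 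Also, in your second step the exclusion on $\partial\Omega_{r_{0}}$ only concerns solutions with $u>0$ and small $\|u\|_{\infty}$ (the paper's condition \ref{cond-H-r0}, proved in Lemma~\ref{lem-small-sol} via the rescaling $w_{n}=u_{n}/\|u_{n}\|_{\infty}$ and regular oscillation); to cover all boundary points of $\Omega_{r_{0}}$ you must first invoke the weak and strong maximum principles for the $\vartheta$-dependent system, which is where \eqref{main-cond-zero-INTRO} enters a second time. These are fixable by following the paper's bookkeeping, but the perturbation in step three must be changed as indicated or the degree-zero claim is unproven.
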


Theorem~\ref{th-main-INTRO} is a simplified version of our main results in Section~\ref{section-2} (see Theorem~\ref{th-main} and Theorem~\ref{th-main-2} for the general statements). Actually, we can assume $a\in L^{1}_{\mathrm{loc}}$ and such that it even eventually vanish in some subintervals of its domain. The precise technical sign hypothesis on $a$ is given in \ref{cond-a-star} in Section~\ref{section-2}. 

The assumption of regular oscillation of $g$ at zero is always satisfied when $g(s)$ has a power growth as $s\to0^{+}$. Such condition also includes many other possible behaviours of $g(s)$ as $s\to 0^{+}$. Regular oscillation hypotheses are commonly found in the qualitative theory of ODEs and are related to the famous Karamata-type conditions (cf.~\cite{Se-1976} and the references therein).
Alternatively we can also impose a suitable smoothness hypothesis for $g$ in a neighborhood of zero (see Remark~\ref{rem-2.2}).

In order to better clarify the meaning of our generalized superlinear conditions at zero and at infinity, let us consider the special case
\begin{equation}\label{syst-intro-alphabeta}
\begin{cases}
\, u' = b(t) |v|^{\alpha-1} v,
\\
\, v' = - a(t) |u|^{\beta-1} u,
\end{cases}
\end{equation}
with $\alpha,\beta>0$ and $b(t)>0$ for all $t\in\mathbb{R}$. Then, both \eqref{main-cond-zero-INTRO} and \eqref{main-cond-infinity-INTRO} are satisfied provided that $\alpha \beta >1$.
Applying the results in \cite{CiGaMa-2000}, one can check that the condition $\alpha \beta >1$ implies that the autonomous planar system
\begin{equation}\label{syst-intro-alphabeta-aut}
\begin{cases}
\, u' = B |v|^{\alpha-1} v,
\\
\, v' = - A |u|^{\beta-1} u,
\end{cases}
\end{equation}
with $A,B$ positive real numbers, defines a global center at the origin with the period map $\tau(E)$ of the orbits at energy level $E$ satisfying
\begin{equation*}
\lim_{E\to 0} \tau(E) = +\infty, \qquad \lim_{E\to +\infty} \tau(E) = 0,
\end{equation*}
which is the typical dynamical property associated with the superlinear systems. Observe also that for \eqref{syst-intro-alphabeta} we have $g(s)=s^{\beta}$ for $s>0$, so that $g$ is strictly increasing and therefore the average hypothesis $\int_{0}^{T} a(t)\,\mathrm{d}t<0$ turns out to be a necessary condition for the existence of solutions positive in the $u$-component. Hence, the example given by \eqref{syst-intro-alphabeta}, although elementary, shows that our assumptions in Theorem~\ref{th-main-INTRO} are sharp.
We also remark that the Hamiltonian associated to \eqref{syst-intro-alphabeta-aut} is
\begin{equation*}
H(u,v) = A \dfrac{|u|^{\beta+1}}{\beta+1} + B \dfrac{|v|^{\alpha+1}}{\alpha+1}
\end{equation*}
and, according to the terminology used in \cite{YaLo-2007}, it is called $(q,p)$-quasi homogenous of quasi-degree $pq$ for $p=\beta+1$, $q=\alpha+1$, due to the fact that $H(\lambda^{q}u, \lambda^{p}v) = \lambda^{pq} H(u,v)$ for every $\lambda>0$. Notice that the superlinear condition $\mu:=1-p^{-1}-q^{-1}>0$ considered in \cite[p.~1039]{FaFo-2022} is equivalent to our assumption $\alpha\beta>1$. For system \eqref{syst-intro-alphabeta} conditions for stability/instability of the origin were studied in \cite{Li-2006}, under additional hypotheses on $\alpha,\beta$ and on the weight coefficients $a,b$. It is interesting to mention that our condition $b(t)>0$ for every $t$ and $\int_{0}^{T} a(t)\,\mathrm{d}t<0$ are consistent with the instability result obtained in \cite{Li-2006}.

The main existence theorems for system \eqref{syst-intro} allow an immediate application to $\phi$-Laplacian differential equations of the form
\begin{equation}\label{eq-intro-phi}
(\phi(u'))'+\lambda a(t)g(u)=0.
\end{equation}
Here, possible different kind of $\phi$ are given by the $p$-Laplacian $\phi(s) = \phi_{p}(s) := |s|^{p-2}s$ (with $p>1$), the $(p,q)$-Laplacian $\phi(s) = |s|^{p-2}s+|s|^{q-2}s$ (with $1< q < p < +\infty$), or the differential operator associated with the relativistic acceleration
\begin{equation*}
\phi(s) = \dfrac{s}{\sqrt{1-s^{2}}},
\end{equation*}
just to mention a few more commonly studied examples (see \cite{BaDAPa-2016,BoFe-20,BoFeZa-2021,FeSoZa-2019,GHMaZa-1997,GHMaZa-2011,KaKnKw-1991,MaMa-1998,ToZa-2021} for previous studies on this kind of equations).
The fact that we allow a time-dependence in the function $h$ permits to consider differential operators
\begin{equation*}
u \mapsto -(\phi(t,u'))'
\end{equation*}
and consequently we can deal also with $p(t)$-Laplacian equations like
\begin{equation*}
(|u'|^{p(t)-2}u')'+\lambda a(t)g(u)=0
\end{equation*}
(see also \cite{BaCh-2013,BCMP-2021,FaFa-2003} for recent contributions in this framework).
A non-exhaustive choice of these examples is given in Section~\ref{section-5}, just to show the wide range of applicability of our results. In any case, it may be interesting to observe that the new superlinear conditions at zero and at infinity can be expressed in the setting of equation \eqref{eq-intro-phi} as
\begin{equation*}
\limsup_{s\to 0} \dfrac{g(s)}{\phi(s)} =0, 
\qquad
\limsup_{s\to +\infty} \dfrac{g(s)}{\phi(s)} = +\infty, 
\end{equation*}
for $\phi\colon \mathbb{R} \to \mathbb{R}$ an odd increasing homeomorphism satisfying the $\sigma$-conditions at zero and at infinity (cf.~\cite{GHMaZa-2011}). We refer to Section~\ref{section-5} for all the applications and details.

As a final remark, we observe that even if we investigate system \eqref{syst-intro-f} which is a planar Hamiltonian system, we stress that our method based on the topological degree theory applies to non-Hamiltonian systems as well. Actually, as in \cite{FeZa-2015}, we could deal with the planar system
\begin{equation*}
\begin{cases}
\, u' = h(t,u,v),
\\
\, v' = - f(t,u),
\end{cases}
\end{equation*}
with $h(t,u,\cdot)$ having the same kind of properties as $h(t,\cdot)$ in system \eqref{syst-intro-f}. In order to avoid unnecessarily burdening of the technical parts, we do not examine this general problem.

As a final observation, we stress that all the results of this paper extend to the Neumann boundary value problem (see Remark~\ref{rem-2.3}). We do not consider here this case in order to avoid unnecessary repetitions.

\section{Hypotheses and statement of the main result}\label{section-2}

In this section, we introduce the differential problem we deal with and we state the main result obtained. We consider the planar system
\begin{equation}\leqnomode
\label{eq:system-S}
\begin{cases}
\, u' = h(t,v),
\\
\, v' = -\lambda a(t)g(u).
\end{cases}
\tag{$\mathcal{S}$}
\end{equation}
We proceed by listing some technical assumptions on the functions $h$, $g$, and $a$.

\medskip

\noindent
\textbf{Hypotheses on $h$.}
We assume that $h\colon\mathbb{R}\times\mathbb{R}\to\mathbb{R}$ is continuous, $T$-periodic in the first variable, and such that
\begin{enumerate}[leftmargin=28pt,labelsep=10pt,label=\textup{$(h_{0})$}]
\item $h(t,0)=0$, for every $t\in\mathbb{R}$;
\label{cond-h-0}
\end{enumerate}
\begin{enumerate}[leftmargin=28pt,labelsep=10pt,label=\textup{$(h_{1})$}]
\item for every $t\in\mathbb{R}$, the function $s \mapsto h(t,s)$ is strictly increasing.
\label{cond-h-2}
\end{enumerate}
As a consequence of \ref{cond-h-0} and \ref{cond-h-2}, we deduce that
\begin{equation}\label{cond-h-1}
h(t,s) s >0, \quad \text{for every $t\in\mathbb{R}$ and $s\in\mathbb{R}\setminus\{0\}$.}
\end{equation}
For future consideration, thanks to the $T$-periodicity of $h$ in the $t$-variable, we fix two strictly increasing functions $\underline{h},\overline{h} \colon \mathbb{R} \to \mathbb{R}$ such that 
\begin{equation*}
\underline{h}(0)=\overline{h}(0)=0, \qquad 
0 \leq \underline{h}(s)s \leq h(t,s)s \leq \overline{h}(s)s, 
\quad \text{for all $s\in\mathbb{R}$.}
\end{equation*}
Accordingly, we define
\begin{equation*}
\underline{h}(s):=\min_{t\in\mathopen{[}0,T\mathclose{]}}h(t,s),
\quad
\overline{h}(s):=\max_{t\in\mathopen{[}0,T\mathclose{]}}h(t,s),
\quad
\text{for $s\geq 0$,}
\end{equation*}
and
\begin{equation*}
\underline{h}(s):=\max_{t\in\mathopen{[}0,T\mathclose{]}}h(t,s),
\quad
\overline{h}(s):=\min_{t\in\mathopen{[}0,T\mathclose{]}}h(t,s),
\quad
\text{for $s< 0$.}
\end{equation*}
Incidentally, notice that the conditions on $h$ are quite natural if we think that $h$ can be considered as the inverse of an invertible map $\phi(t,\cdot)$.

\medskip

\noindent
\textbf{Hypotheses on $g$.}
We assume that $g\colon\mathopen{[}0,+\infty\mathclose{[}\to\mathopen{[}0,+\infty\mathclose{[}$ is continuous and such that
\begin{equation}\leqnomode
\label{eq:cond-g-star}
g(0)=0, \qquad g(s)>0, \; \text{ for every $s\in\mathopen{]}0,+\infty\mathclose{[}$.}
\tag{$g_{*}$}
\end{equation}
Moreover, we introduce the function $\overline{g}\colon \mathopen{]}0,+\infty\mathclose{[} \to \mathopen{]}0,+\infty\mathclose{[}$ defined by
\begin{equation*}
\overline{g}(s) := \max_{\xi\in\mathopen{[}0,s\mathclose{]}} g(\xi).
\end{equation*}
Similarly, we introduce the function $\underline{g}\colon \mathopen{]}0,+\infty\mathclose{[} \to \mathopen{]}0,+\infty\mathclose{[}$ defined by
\begin{equation*}
\underline{g}(s) := \min_{\xi\in\mathopen{[}\frac{s}{2},s\mathclose{]}} g(\xi).
\end{equation*}
We also recall that a function $g\colon\mathopen{[}0,+\infty\mathclose{[}\to \mathopen{[}0,+\infty\mathclose{[}$ satisfying \eqref{eq:cond-g-star} is said to be \textit{regularly oscillating at zero} if
\begin{equation*}
\lim_{\substack{s\to 0^{+}\\\omega\to 1}} \dfrac{g(\omega s)}{g(s)} = 1.
\end{equation*}
We refer the reader to \cite{BGT-1987,FeZa-2015,Se-1976} for a discussion on this concept and its relevance in real analysis.

\medskip

\noindent
\textbf{Hypotheses on $a$.}
We assume that $a\colon\mathbb{R}\to\mathbb{R}$ is a locally integrable $T$-periodic function such that
\begin{equation}\leqnomode
\label{eq:cond-a-diesis}
\int_{0}^{T} a(t) \,\mathrm{d}t <0.
\tag{$a_{\#}$}
\end{equation}
Furthermore, we assume that
\begin{enumerate}[leftmargin=27pt,labelsep=10pt,label=\textup{$(a_{*})$}]
\item there exist $N \geq 1$ closed and pairwise disjoint intervals $J_{n}$ in the quotient space $\mathbb{R}/T\mathbb{Z}$ such that
\begin{align*}
\qquad\qquad a>0, \; \text{ for a.e.~$t\in J_{n}$,}
\end{align*}
for every $n=1,\ldots,N$.
\label{cond-a-star}
\end{enumerate}
As a consequence, $a\leq 0$ for a.e.~$t\in (\mathbb{R}/T\mathbb{Z}) \setminus \bigcup_{n=1}^{N} J_{n}$. To simplify the exposition, we set
\begin{equation*}
J_{n}= \mathopen{[} \sigma_{n},\tau_{n} \mathclose{]},\quad  n=1,\ldots,N,
\end{equation*}
where $\sigma_{1}<\tau_{1}<\sigma_{2}<\tau_{2}<\ldots<\sigma_{N}<\tau_{N}<\sigma_{N+1} = \sigma_{1} + T$. 

For future consideration, we introduce the constant
\begin{equation}\label{def-gamma}
\gamma := \min_{n=1,\ldots,N} |J_{n}|
\end{equation}
and, for each $\delta\in\mathopen{]}0,\gamma\mathclose{]}$, the function
\begin{equation}\label{def-A-star}
A^{*}(\delta):= \inf \biggl{\{} \int_{J} a(t)\,\mathrm{d}t \colon J \text{\,interval}, \textstyle J\subseteq \bigcup_{n=1}^{N}J_{n}, |J|= \delta \biggr{\}}.
\end{equation}
We stress that $A^{*}(\delta)>0$. Indeed, if $J= \mathopen{[}\omega,\omega+\delta\mathclose{]}$ (for some $\omega\in\mathbb{R}$) with $J \subseteq J_{n} = \mathopen{[}\sigma_{n},\tau_{n}\mathclose{]}$ for some $n\in\{1,\ldots,N\}$, then the continuous map
\begin{equation*}
\mathopen{[}\sigma_{n}, \tau_{n}-\delta\mathclose{]} \ni \omega \mapsto A_{n}(\omega+\delta)-A_{n}(\omega) = \int_{J} a(t)\,\mathrm{d}t >0
\end{equation*}
achieves a positive minimum $a^{*}_{n}(\delta)$, where $A_{n}(\xi):=\int_{\sigma_{n}}^{\xi} a(t)\,\mathrm{d}t$, $\xi\in J_{n}$, is the primitive of the weight $a$ on $J_{n}$. Then, we have $A^{*}(\delta) = \min_{n} a^{*}_{n}(\delta)>0$.

\medskip

We say that the couple $(u,v)$ is a \textit{$T$-periodic solution} of \eqref{eq:system-S} if $u$ and $v$ are absolutely continuous functions, $T$-periodic, and solve the two differential equations in \eqref{eq:system-S} almost everywhere in $\mathbb{R}$. Moreover, we notice that if $(u,v)$ is a solution of \eqref{eq:system-S}, then $u\in\mathcal{C}^{1}(\mathbb{R})$, due to the continuity of the function $h$.

\begin{remark}\label{rem-2.1}
We notice that condition \eqref{eq:cond-a-diesis} is a natural assumption, since if we deal with a continuously differentiable increasing function $g$, then \eqref{eq:cond-a-diesis} is necessary for the existence of solutions $(u,v)$ of \eqref{eq:system-S} with $u>0$. Indeed, an integration leads to
\begin{align*}
\lambda \int_{0}^{T} a(t)\, \mathrm{d}t &= -\int_{0}^{T} \dfrac{v'(t)}{g(u(t))}\, \mathrm{d}t 
= \biggl{[} \dfrac{v(t)}{g(u(t))}\biggr{]}_{t=0}^{t=T} - \int_{0}^{T} \dfrac{v(t)g'(u(t))u'(t)}{(g(u(t)))^{2}}\, \mathrm{d}t 
\\
&= - \int_{0}^{T} \dfrac{h(t,v(t))v(t)g'(u(t))}{(g(u(t)))^{2}}\, \mathrm{d}t <0,
\end{align*}
where the last inequality follow by \eqref{cond-h-1} and the monotonicity of $a$.
\hfill$\lhd$
\end{remark}

\medskip

The main results of this paper read as follows.

\begin{theorem}\label{th-main}
Let $h\colon\mathbb{R}\times\mathbb{R}\to\mathbb{R}$ be a continuous function, $T$-periodic in the first variable, and satisfying \ref{cond-h-0} and \ref{cond-h-2}.
Let $a\colon\mathbb{R}\to\mathbb{R}$ be a locally integrable $T$-periodic function satisfying \eqref{eq:cond-a-diesis} and \ref{cond-a-star}. Let $g\colon\mathopen{[}0,+\infty\mathclose{[}\to\mathopen{[}0,+\infty\mathclose{[}$ be a continuous function, regularly oscillating at zero, satisfying \eqref{eq:cond-g-star} and 
\begin{equation}\label{main-cond-zero}
\lim_{s\to 0^{+}} \dfrac{h(t, K g(s))}{s} = 0,
\quad \text{uniformly in $t$, for every $K\in\mathbb{R}$.}
\end{equation}
Then, there exists $\lambda^{*}>0$ such that for every $\lambda>\lambda^{*}$, system \eqref{eq:system-S} has at least one $T$-periodic solution $(u,v)$ with $u(t)>0$ for all $t\in\mathbb{R}$.
\end{theorem}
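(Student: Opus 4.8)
The plan is to carry out the four-step scheme announced in Section~\ref{section-1}. First I would embed \eqref{eq:system-S} into the auxiliary system \eqref{syst-intro-f}, with $f(t,u)=\lambda a(t)g(u)$ for $u\ge 0$ and $f(t,u)<0$ for $u<0$, and recast the $T$-periodic problem as a coincidence equation $Lz=Nz$ on $X=\mathcal{C}_{T}$. Here $L$ is the differentiation operator $Lz=z'$ acting on the absolutely continuous $T$-periodic pairs $z=(u,v)$, and $N$ is the Nemytskii operator $(Nz)(t)=\bigl(h(t,v(t)),\,-f(t,u(t))\bigr)$. Then $L$ is Fredholm of index zero, with $\ker L$ the constants and $\operatorname{Im}L$ the zero-mean functions, and $N$ is $L$-completely continuous, so that the $T$-periodic solutions of \eqref{syst-intro-f} are exactly the zeros of $L-N$. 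Denoting by $\mathrm{D}_{L}(L-N,\Omega)$ the associated coincidence degree, the goal is to produce a nontrivial zero by showing, via the additivity property $\mathrm{D}_{L}(L-N,B_{R}\setminus\overline{B_{r}})=\mathrm{D}_{L}(L-N,B_{R})-\mathrm{D}_{L}(L-N,B_{r})$, that the degree on a suitable annulus $B_{R}\setminus\overline{B_{r}}$ is nonzero.

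For the inner sphere (Step 2), I would show that there is $r_{0}>0$ such that, for every $r\in(0,r_{0}]$, the homotopy $Lz=\vartheta Nz$, $\vartheta\in[0,1]$, has no solution $z$ with $\|z\|=r$; homotopy invariance then makes $\mathrm{D}_{L}(L-N,B_{r})$ equal to the degree of a nondegenerate reference field, hence $\pm1\ne0$. This nonexistence of small nontrivial solutions is exactly where \eqref{main-cond-zero} and the regular oscillation of $g$ enter. Indeed, if such a $z$ existed with $u\not\equiv0$, integrating the second equation over a period would give $\int_{0}^{T}a(t)g(u(t))\,\mathrm{d}t=0$; on the other hand, \eqref{main-cond-zero} controls $v$ in terms of $g(u)$ and forces $u'=\vartheta h(t,v)$ to be of order $o(\|u\|_{\infty})$, so that, using regular oscillation to compare the values of $g(u)$ on the scale of $\|u\|_{\infty}$ (cf.~the technical estimates of Section~\ref{section-4}), the oscillation of $u$ becomes negligible and $\int_{0}^{T}a(t)g(u(t))\,\mathrm{d}t$ keeps the sign of $\int_{0}^{T}a<0$, a contradiction. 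That the only constant solution is $z=0$ follows from $h(t,\cdot)$ being strictly increasing with $h(t,0)=0$ and from $a\not\equiv0$.

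The main obstacle is the outer sphere (Step 3): proving $\mathrm{D}_{L}(L-N,B_{R})=0$ for $R$ large, once $\lambda$ is large. Since the eigenvalue estimates of \cite{FeZa-2015} are unavailable for a general $h$, I would instead run the classical pushing homotopy $Lz=Nz+\mu\mathbf{e}$, with $\mathbf{e}\in\ker L$ a fixed constant, and establish two facts: no solution lies on $\partial B_{R}$ for any $\mu\ge0$, and no solution exists in $\overline{B_{R}}$ once $\mu$ is large; both together force $\mathrm{D}_{L}(L-N,B_{R})=0$. The required a priori bound is produced on the positivity intervals: fixing $\delta\in(0,\gamma]$ and a subinterval $J\subseteq\bigcup_{n}J_{n}$ of length $\delta$, as long as $u$ stays positive the equation $v'=-\lambda a(t)g(u)$ forces $v$ to drop by at least an amount comparable to $\lambda A^{*}(\delta)\,\underline{g}(\,\cdot\,)$, and through $u'=h(t,v)$ together with the lower control $\underline{h}$ this compels the trajectory to leave any prescribed tube as soon as $\lambda$ exceeds a threshold $\lambda^{*}$. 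Taking $\lambda>\lambda^{*}$ makes this drop-and-escape mechanism effective even when $h$ is bounded, which is precisely the gain of the large-$\lambda$ regime. Quantifying this so that solutions are excluded on $\partial B_{R}$ uniformly in $\mu\ge0$, while simultaneously handling the constant solutions of the pushed equation, is the delicate point, and it is what pins down the value of $\lambda^{*}$.

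Finally (Step 4), additivity yields a nontrivial $T$-periodic solution $z^{*}=(u,v)$ of \eqref{syst-intro-f} lying in the annulus, and it remains to upgrade it to a solution of \eqref{eq:system-S} positive in the $u$-component. Since $f(t,u)<0$ for $u<0$, the weak form of the maximum principle gives $u\ge0$ (so that $(u,v)$ actually solves \eqref{eq:system-S}), and the strong maximum principle in the periodic setting, namely Proposition~\ref{strong-max-p} or the sharper Proposition~\ref{strong-max-p-2}, excludes interior zeros, whence $u(t)>0$ for all $t\in\mathbb{R}$. This completes the argument, with $\lambda^{*}$ the threshold emerging from Step 3.
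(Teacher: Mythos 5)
Your overall architecture coincides with the paper's: coincidence degree on $X=\mathcal{C}_T$, a nonzero degree on a small set via the homotopy $Lz=\vartheta Nz$ together with Mawhin's lemma and the regular-oscillation contradiction (this is exactly the paper's Lemma~\ref{lem-small-sol}), a zero degree on a large set via a pushing perturbation (Lemma~\ref{lemma-2.2}), and the maximum principles to upgrade the nontrivial solution to a positive one. The genuine gap is in your Step 3. You push with a \emph{constant} vector $\mathbf{e}\in\ker L$, whereas the paper perturbs by $-\alpha(0,w)$ with $w\geq 0$ a non-constant function vanishing identically on $(\mathbb{R}/T\mathbb{Z})\setminus\bigcup_{n}J_{n}$. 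That support condition is not cosmetic: the drop-and-escape mechanism of Theorem~\ref{large-solution-1} is launched from the fact (Lemma~\ref{lem-maximum}, extended to the perturbed system) that the maximum of $u$ is attained \emph{inside} a positivity interval $J_{n}$, and the proof of this fact uses that on the set where $a\leq 0$ the perturbed equation still gives $v'=-\lambda a\,g(u)-\alpha w\geq 0$ a.e., so that $v\geq v(t^{*})=0$ past a maximum point and $u$ would keep increasing, a contradiction. With a constant push, say $(0,-\mu c)$, this sign information is destroyed: on an interval where $a\equiv 0$ one gets $v(t)=-\mu c\,(t-t^{*})<0$, the maximum of $u$ can no longer be relocated into $\bigcup_{n}J_{n}$, and the estimate through $A^{*}$ and $\underline{g}(R)$ — which needs $u\geq R/2$ on a subinterval of some $J_{n}$ where $\int a>0$ — cannot start. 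Hence for intermediate $\mu$ (below the threshold giving global nonexistence, above zero) you cannot exclude solutions on the boundary, i.e.\ the admissibility of your homotopy is unproved; it is precisely to make it provable that the paper tailors the direction $w$ to the positivity set.

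There is a second, related defect: you work on a ball $B_{R}$ of $X$, but the a priori bound (the paper's property $(\mathscr{H}_{R}^{\lambda^{*}})$) only controls the $u$-component, and under the hypotheses of Theorem~\ref{th-main} there is no control of $\overline{g}(R)$ in terms of $R$, so points of $\partial B_{R}$ with $\|u\|_{\infty}<R$ and $\|v\|_{\infty}=R$ are not excluded for any value of the push parameter. The paper avoids this by using the asymmetric set $\Omega_{R,R'}=\{(u,v)\colon \|u\|_{\infty}<R,\ \|v\|_{\infty}<R'\}$ with $R'>\lambda\|a\|_{L^{1}}\overline{g}(R)+\alpha_{0}\|w\|_{L^{1}}$, where $\alpha_{0}$ is the threshold beyond which no solutions with $\|u\|_{\infty}\leq R$ exist at all (by integrating the second equation over a period). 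Finally, a smaller gap in your Step 2: the identity $\int_{0}^{T}a(t)g(u(t))\,\mathrm{d}t=0$ presupposes $u\geq 0$, which for solutions of the homotopy with the extended nonlinearity $f$ must first be obtained from the weak maximum principle, and the reduction to solutions with $u>0$ (needed to invoke $(\mathscr{H}_{r_{0}})$ as stated) uses Proposition~\ref{strong-max-p}, whose hypothesis \eqref{cond-strong-max-p} follows from \eqref{main-cond-zero}; the paper carries out this reduction before any degree computation, and your sketch needs it there, not only in Step 4.
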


\begin{theorem}\label{th-main-2}
Let $h\colon\mathbb{R}\times\mathbb{R}\to\mathbb{R}$ be a continuous function, $T$-periodic in the first variable, and satisfying \ref{cond-h-0} and \ref{cond-h-2}. 
Let $a\colon\mathbb{R}\to\mathbb{R}$ be a locally integrable $T$-periodic function satisfying \eqref{eq:cond-a-diesis} and \ref{cond-a-star}. Let $g\colon\mathopen{[}0,+\infty\mathclose{[}\to\mathopen{[}0,+\infty\mathclose{[}$ be a continuous function, regularly oscillating at zero, satisfying \eqref{eq:cond-g-star}, \eqref{main-cond-zero} and 
\begin{equation}\label{main-cond-infinity}
\lim_{s\to+\infty} \mathrm{sign}(K) \frac{h(t,Kg(s))}{s} = +\infty, 
\quad \text{uniformly in $t$, for every $K\in\mathbb{R}\setminus\{0\}$.}
\end{equation}
Then, for every $\lambda>0$, system \eqref{eq:system-S} has at least one $T$-periodic solution $(u,v)$ with $u(t)>0$ for all $t\in\mathbb{R}$.
\end{theorem}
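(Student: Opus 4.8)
The plan is to follow the four-step scheme of the Introduction; the only structural difference with respect to Theorem~\ref{th-main} is that the additional hypothesis \eqref{main-cond-infinity} must be used to carry out the third step for \emph{every} $\lambda>0$ rather than only for $\lambda$ large. First I would set up the coincidence degree framework of Section~\ref{section-3}: embed the $T$-periodic problem for the auxiliary system \eqref{syst-intro-f}, where $f(t,u)=\lambda a(t)g(u)$ for $u\ge 0$ and $f(t,u)<0$ for $u<0$, into the operator equation $Lz=Nz$ in $X=\mathcal{C}_{T}$, with $z=(u,v)$. Here $L$ is the derivative operator on $T$-periodic absolutely continuous pairs, so that $\ker L$ is the set of constant pairs $(c_{1},c_{2})\in\mathbb{R}^{2}$ and $\operatorname{Im}L=\{w\in X:\int_{0}^{T}w(t)\,\mathrm{d}t=0\}$; thus $L$ is Fredholm of index zero and the superposition operator $Nz=(h(\cdot,v),-f(\cdot,u))$ is $L$-compact on bounded sets. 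Producing a nontrivial zero of $L-N$ is then equivalent to producing a nontrivial $T$-periodic solution of \eqref{syst-intro-f}, and such a zero will be detected through the Mawhin coincidence degree $D_{L}(L-N,\Omega)$ on suitable open bounded sets $\Omega$.

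For the second step I would prove that there is $r_{0}>0$ with $D_{L}(L-N,B(0,r))\neq 0$ for all $r\in\mathopen{]}0,r_{0}\mathclose{]}$. Following the Krasnosel'skii--Nussbaum technique, the below-linear condition \eqref{main-cond-zero} together with the regular oscillation of $g$ at zero gives the control of $h(t,Kg(s))/s$ near $s=0$ needed to make the homotopy $Lz=\vartheta Nz+(1-\vartheta)QNz$ (with $Q$ a continuous projection associated with $L$) admissible on $\partial B(0,r)$ for small $r$; the reduction formula then identifies the degree, up to sign, with a Brouwer degree on $\ker L$, where the sign condition \eqref{eq:cond-a-diesis} forces the relevant planar vector field to have nonzero (indeed $\pm 1$) index. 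The supporting estimates are those of Section~\ref{section-4}.

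The third step is the delicate point and the main obstacle. I would show that there is $R_{0}>r_{0}$ with $D_{L}(L-N,B(0,R))=0$ for all $R\ge R_{0}$, uniformly in the fixed but arbitrary $\lambda>0$. The scheme is to push the equation by a nonnegative forcing term, considering $Lz=Nz+\alpha w$ with $w\ge0$ fixed and $\alpha\ge 0$, and to prove both that no member of this family has a solution on a large sphere $\partial B(0,R)$ and that for $\alpha$ large there is no solution at all in $\overline{B(0,R)}$; homotopy invariance then yields $D_{L}(L-N,B(0,R))=0$. Where \cite{FeZa-2015} could invoke linear eigenvalue estimates---available only when $h(t,v)=v$---here one must instead extract from \eqref{main-cond-infinity} a time-map/rotation estimate: on a positive-weight interval $J_{n}$ a solution of large amplitude has so fast an angular speed that the $u$-component is forced to vanish inside $J_{n}$, which is incompatible with the solution lying on $\partial B(0,R)$ for the whole forcing family. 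Making this quantitative, uniformly in $t$ and valid for every $\lambda>0$ (rather than only $\lambda$ large), is the technical heart of the argument.

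Finally, by additivity of the degree, $D_{L}(L-N,B(0,R)\setminus\overline{B(0,r)})=D_{L}(L-N,B(0,R))-D_{L}(L-N,B(0,r))\neq 0$, so $L-N$ has a nontrivial zero in the annulus; equivalently \eqref{syst-intro-f} admits a nontrivial $T$-periodic solution $(u,v)$. It remains to check that $u(t)>0$ for all $t$. The weak maximum principle, using that $f(t,u)<0$ for $u<0$, first gives $u\ge 0$, and the strong maximum principle of Proposition~\ref{strong-max-p} (or the sharper Proposition~\ref{strong-max-p-2}) upgrades this to $u(t)>0$ for all $t\in\mathbb{R}$. Since $u>0$ we have $f(t,u)=\lambda a(t)g(u)$, so $(u,v)$ solves the original system \eqref{eq:system-S}, which completes the proof.
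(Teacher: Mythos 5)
Your overall architecture is exactly the paper's: the coincidence-degree setup of Section~\ref{section-3}, a nonzero degree near the origin via Mawhin's reduction (Lemma~\ref{lemma_Mawhin}) together with the small-solution exclusion \ref{cond-H-r0}, a zero degree on a large set via the forcing family $L(u,v)=N(u,v)-\alpha(0,w)$ and Lemma~\ref{lemma-2.2} (with the large-amplitude exclusion \ref{cond-H-lambda-00}, i.e.\ Theorem~\ref{large-solution-2}, now valid for every fixed $\lambda>0$ thanks to \eqref{main-cond-infinity}), then additivity and the two maximum principles. The mechanism you describe for the large-amplitude estimate (fast drop of $v$ on a positivity interval $J_{n}$ forcing $u$ to cross zero inside $J_{n}$) is also the paper's, although the contradiction there is with the nonnegativity $u\geq 0$ supplied by the weak maximum principle, not with membership in a sphere.

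There is, however, a concrete gap in your third step. You claim $\mathrm{D}_{L}(L-N,B(0,R))=0$ by excluding solutions of the forced equation on $\partial B(0,R)$, but the exclusion you invoke only concerns the $u$-component: \ref{cond-H-lambda-00} rules out solutions with $\|u\|_{\infty}=R$ and says nothing about boundary points where $\|v\|_{\infty}=R$ while $\|u\|_{\infty}<R$. These cannot be ignored, and they cannot be handled by a ball of a single radius: the only available control on $v$ comes from integrating the second equation of \eqref{syst-alpha-2} from a zero of $v$ (a maximum point of $u$), which gives $\|v\|_{\infty}\leq \lambda\|a\|_{L^{1}}\overline{g}(\|u\|_{\infty})+\alpha\|w\|_{L^{1}}$, a bound of order $\overline{g}(R)$ that for superlinear $g$ eventually exceeds $R$. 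This is precisely why the paper does not use a ball but the asymmetric rectangle $\Omega_{R,R'}=\bigl\{(u,v)\in X \colon \|u\|_{\infty}<R,\ \|v\|_{\infty}<R'\bigr\}$: one first fixes $R$ from \ref{cond-H-lambda-00}, then the threshold $\alpha_{0}>\lambda\|a\|_{L^{1}}\overline{g}(R)/\|w\|_{L^{1}}$ beyond which no solution with $\|u\|_{\infty}\leq R$ exists at all, and only afterwards chooses $R'>\max\bigl\{r_{0},\lambda\|a\|_{L^{1}}\overline{g}(R)+\alpha_{0}\|w\|_{L^{1}}\bigr\}$, so that neither face of $\partial\Omega_{R,R'}$ can carry a solution of the forced family for any $\alpha\geq 0$. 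Note that your small-ball step does not suffer from the analogous problem, because \ref{cond-H-r0} excludes \emph{every} amplitude $r\in\mathopen{]}0,r_{0}\mathclose{]}$ and the homotopy solutions are strictly positive there by the maximum principles, so a solution on the $v$-face would still have some excluded $u$-amplitude; in the large-set step the exclusion holds only for large amplitudes, so the missing a priori bound on $v$ must be supplied separately, as the paper does.
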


\begin{remark}\label{rem-2.2}
Theorem~\ref{th-main} and Theorem~\ref{th-main-2} remain valid if we replace the hypothesis that $g$ is regularly oscillating with the hypothesis that $g$ is continuously differentiable and such that
\begin{equation*}
\limsup_{s\to 0^{+}} \dfrac{g'(s)s}{g(s)} \leq C_{g},
\quad \text{for some $C_{g}\geq0$.}
\end{equation*}
This will become clear in Section~\ref{section-4.1}.
\hfill$\lhd$
\end{remark}

\begin{remark}[Neumann boundary conditions]\label{rem-2.3}
We observe that the existence results stated in Theorem~\ref{th-main} and Theorem~\ref{th-main-2} are also valid when dealing with system \eqref{eq:system-S} associated with the boundary conditions $v(0)=v(T)=0$ (or equivalently $u'(0)=u'(T)=0$).
The analysis in the present paper can be adapted to this case following the same strategy by fixing the technicalities as in \cite[Chapter~3]{Fe-2018}. Concerning the related maximum principles we also refer to Remark~\ref{rem-A.1}.
\hfill$\lhd$
\end{remark}

\subsection{Maximum principles}\label{section-2.1}

The proofs of Theorem~\ref{th-main} and its variants are based on the topological degree theory and accordingly we will introduce operators defined in Banach spaces. With this aim, we extend the nonlinear functions in \eqref{eq:system-S} on the whole real line. Accordingly, we define $f\colon\mathbb{R}\times\mathbb{R}\to\mathbb{R}$
\begin{equation*}
f(t,s) :=
\begin{cases}
\, \lambda a(t) g(s), &\text{if $s\in\mathopen{[}0,+\infty\mathclose{[}$,}
\\
\, -s, &\text{if $s\in\mathopen{]}-\infty,0\mathclose{[}$,}
\end{cases}
\end{equation*}
and consider the planar system
\begin{equation}\leqnomode
\label{eq:system-S-tilde}
\begin{cases}
\, u' = h(t,v),
\\
\, v' = -f(t,u).
\end{cases}
\tag{$\tilde{\mathcal{S}}$}
\end{equation}
By the weak maximum principle (cf.~Proposition~\ref{weak-max-p}), since $-f(t,s)<0$ for a.e.~$t\in\mathbb{R}$ and for all $s\in\mathopen{]}-\infty,0\mathclose{[}$, we have that a $T$-periodic solution $(u,v)$ of \eqref{eq:system-S-tilde} is such that $u(t)\geq 0$ for all $t\in\mathbb{R}$ and thus $(u,v)$ is also a $T$-periodic solution of \eqref{eq:system-S}.

Now, we present a strong maximum principle which ensures that the first component of a non-trivial $T$-periodic solution of \eqref{eq:system-S-tilde} is a positive function. 

\begin{proposition}\label{strong-max-p}
Let $h\colon\mathbb{R}\times\mathbb{R}\to\mathbb{R}$ be a continuous function, $T$-periodic in the first variable, and satisfying \ref{cond-h-0} and \ref{cond-h-2}. Let $a\colon\mathbb{R}\to\mathbb{R}$ be a locally integrable $T$-periodic function satisfying \ref{cond-a-star}. Let $g\colon\mathopen{[}0,+\infty\mathclose{[}\to\mathopen{[}0,+\infty\mathclose{[}$ be a continuous function satisfying \eqref{eq:cond-g-star}.
Assume that for every constant $K>0$ there exists $\varepsilon>0$ and $\beta>0$ such that
\begin{equation}\label{cond-strong-max-p}
h(t, K g(s)) \leq \beta s, \quad \text{for all $t\in\mathbb{R}$ and $s\in\mathopen{[}0,\varepsilon\mathclose{[}$.}
\end{equation}
If $(u,v)$ is a non-trivial $T$-periodic solution of \eqref{eq:system-S-tilde}, then $u(t)>0$ for all $t\in\mathbb{R}$.
\end{proposition}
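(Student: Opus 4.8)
The plan is to argue by contradiction: I will show that if the first component $u$ of a nontrivial solution vanishes at some point, then it must vanish identically, which in turn forces the whole couple to be trivial. Before getting to the heart of the matter I would record two elementary reductions. By the weak maximum principle (Proposition~\ref{weak-max-p}) any $T$-periodic solution of \eqref{eq:system-S-tilde} satisfies $u(t)\geq 0$ for all $t$, and $u$ is of class $\mathcal{C}^{1}$ since $u'=h(t,v)$ is continuous. Moreover, if $u\equiv 0$ then $h(t,v(t))=u'(t)=0$ for every $t$, so \ref{cond-h-0} and \ref{cond-h-2} (strict monotonicity of $h(t,\cdot)$ together with $h(t,0)=0$) give $v\equiv 0$; hence a nontrivial solution has $u\not\equiv 0$, and it suffices to exclude the situation in which both $\{u>0\}$ and $\{u=0\}$ are nonempty.

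So assume both sets are nonempty. Since $u$ is $T$-periodic and $\{u=0\}\neq\emptyset$, the open set $\{u>0\}$ cannot contain an unbounded component, so I may select a bounded connected component $\mathopen{]}c,d\mathclose{[}$ of it, for which $u(c)=0$ and $u>0$ on $\mathopen{]}c,d\mathclose{[}$. As $c$ is an interior minimum of the $\mathcal{C}^{1}$ function $u\geq 0$, we have $u'(c)=h(c,v(c))=0$, whence $v(c)=0$, once more by strict monotonicity of $h(c,\cdot)$. Working at a left endpoint of a positivity component is the key structural choice: it will let me rely on a forward-in-time estimate only, and thus avoid requiring any lower bound on $h(t,\cdot)$ at negative arguments.

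The core is then a forward Gronwall estimate on a short interval $[c,c+\delta]$. Writing $W(t):=\max_{[c,t]}u$, which is continuous, nondecreasing, and satisfies $W(c)=0$, and integrating the second equation of \eqref{eq:system-S-tilde} from $c$, I obtain $|v(t)|\leq \lambda C_{a}\,\overline{g}(W(t))$, where $C_{a}:=\|a\|_{L^{1}(0,T)}$ and $\overline{g}$ is the increasing majorant of $g$ introduced above. Monotonicity of $h(t,\cdot)$ then yields $u'(t)=h(t,v(t))\leq h\bigl(t,K\,\overline{g}(W(t))\bigr)$ with $K:=\lambda C_{a}>0$. Now the hypothesis \eqref{cond-strong-max-p} enters: choosing $\delta$ so small that $W(c+\delta)<\varepsilon$ and writing $\overline{g}(W(t))=g(\xi^{*})$ for some $\xi^{*}\in[0,W(t)]$ (by continuity of $g$ on a compact interval), the bound \eqref{cond-strong-max-p} applied at $s=\xi^{*}$ gives $u'(t)\leq \beta\xi^{*}\leq \beta W(t)$. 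Integrating from $c$ and using that $W$ is nondecreasing produces $W(t)\leq \beta(t-c)W(t)$, so $W\equiv 0$ on $[c,c+\delta]$ as soon as $\beta\delta<1$; this contradicts $u>0$ on $\mathopen{]}c,d\mathclose{[}$, completing the argument.

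I expect the main obstacle to be precisely the interface between the hypothesis and the estimate: condition \eqref{cond-strong-max-p} controls $h(t,Kg(s))$ in terms of $g$ and only for nonnegative arguments, whereas the natural a priori bound on $v$ produces the majorant $\overline{g}$ rather than $g$ itself. The reconciliation is the elementary identity $\overline{g}(W)=g(\xi^{*})$ with $\xi^{*}\leq W<\varepsilon$, which converts the majorant back into a genuine value of $g$ at an admissible point; together with the left-endpoint reduction (which removes the need for two-sided control of $h$), this makes the one-sided hypothesis exactly sufficient.
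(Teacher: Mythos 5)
Your proof is correct and takes essentially the same route as the paper's: argue by contradiction at a zero of $u$ chosen as the left endpoint of a positivity interval (where $u'$, hence $v$, also vanishes), bound $v$ forward in time by an integral of $a$ against $g(u)$, apply hypothesis \eqref{cond-strong-max-p} at an admissible point below $\varepsilon$, and conclude on a short interval via $\beta\delta<1$. Your running maximum $W(t)$ and the point $\xi^{*}$ are just notational variants of the paper's maximum points $\hat{t}$ (of $u$) and $\bar{t}$ (of $g\circ u$), and your constant $K=\lambda\|a\|_{L^{1}}$ in place of the paper's $\lambda\|a^{-}\|_{L^{1}}$ is immaterial since the hypothesis is assumed for every $K>0$.
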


\begin{proof}
Let $(u,v)$ be a non-trivial $T$-periodic solution of \eqref{eq:system-S-tilde}. By Proposition~\ref{weak-max-p}, we have that $u(t)\geq 0$ for all $t\in\mathbb{R}$ and thus $(u,v)$ solves \eqref{eq:system-S}. By contradiction, we suppose that there exists $t^{*}\in\mathbb{R}$ such that $u(t^{*})=0$. We notice that $u'(t^{*})=0$ and consequently $h(t^{*},v(t^{*}))=0$ (by exploiting the first equation in \eqref{eq:system-S-tilde}) and $v(t^{*})=0$ (by \ref{cond-h-0} and \ref{cond-h-2}).
Since $u\not\equiv0$ (otherwise $(u,v)\equiv(0,0)$), we have that there exists $t_{1}\in\mathbb{R}$ with $t_{1}>t^{*}$ and $u(t_{1})>0$. We define $t_{0}:=\max\{t\in\mathopen{[}t^{*},t_{1}\mathclose{]}\colon u(t)=0\}$. Therefore, $u(t_{0})=u'(t_{0})=v(t_{0})=0$ and $u(t)>0$ for all $t\in\mathopen{]}t_{0},t_{1}\mathclose{]}$. Thus, we have
\begin{equation*}
v(t) = v(t_{0}) - \lambda \int_{t_{0}}^{t} a(\xi) g(u(\xi)) \,\mathrm{d}\xi 
\leq \lambda \int_{t_{0}}^{t} a^{-}(\xi) g(u(\xi)) \,\mathrm{d}\xi,
\quad \text{for all $t\in\mathopen{[}t_{0},t_{1}\mathclose{]}$,}
\end{equation*}
and, by \ref{cond-h-2},
\begin{align*}
u(t) &= u(t_{0}) + \int_{t_{0}}^{t} h(\xi,v(\xi)) \,\mathrm{d}\xi 
\\
&\leq \int_{t_{0}}^{t} h \biggl{(}\xi,\lambda \int_{t_{0}}^{\xi} a^{-}(s) g(u(s)) \,\mathrm{d}s \biggr{)} \,\mathrm{d}\xi,
\quad \text{for all $t\in\mathopen{[}t_{0},t_{1}\mathclose{]}$.}
\end{align*}
Choose $K:= \lambda \|a^{-}\|_{L^{1}(t_{0},t_{1})}$ and consider $\varepsilon, \beta>0$ satisfying inequality \eqref{cond-strong-max-p}. Let $\delta>0$ be such that $t_{0}+\delta<t_{1}$, $\delta \beta < 1$, and $u(t)<\varepsilon$ for each $t\in\mathopen{[}t_{0},t_{0}+\delta\mathclose{]}$.
Let $\hat{t}$ be a maximum point of $u$ in $\mathopen{[}t_{0},t_{0}+\delta\mathclose{]}$ and $\bar{t}$ a maximum point of $g\circ u$ in $\mathopen{[}t_{0},t_{0}+\delta\mathclose{]}$.
Then, we deduce
\begin{align*}
u(\hat{t}) 
&\leq \int_{t_{0}}^{t_{0}+\delta} h \biggl{(}\xi,\lambda \int_{t_{0}}^{\xi} a^{-}(s) g(u(s)) \,\mathrm{d}s \biggr{)} \,\mathrm{d}\xi
\\
&\leq \int_{t_{0}}^{t_{0}+\delta} h (\xi,\lambda \|a^{-}\|_{L^{1}(t_{0},t_{1})} g(u(\bar{t})) ) \,\mathrm{d}\xi
\leq \int_{t_{0}}^{t_{0}+\delta} \beta u(\bar{t}) \,\mathrm{d}\xi \leq \delta \beta u(\hat{t}) < u(\hat{t}),
\end{align*}
a contradiction.
The proof is complete.
\end{proof}

When $a$ is essentially bounded (at least in the intervals where it is negative), a different version of the above strong maximum principle can be stated by exploiting Proposition~\ref{strong-max-time-maps}. 
Preliminarily, let us introduce the primitives $\underline{H}$ and $G$ of $\underline{h}$ and $g$, respectively, that is
\begin{equation*}
\underline{H}(s):=\int_{0}^{s}\underline{h}(\xi) \,\mathrm{d}\xi, 
\quad
G(s):=\int_{0}^{s} g(\xi) \,\mathrm{d}\xi, 
\end{equation*}
and denote by $\underline{H}^{-1}_{\mathrm{l}}$ and $\underline{H}^{-1}_{\mathrm{r}}$ the left and right inverse of $\underline{H}$, respectively.

\begin{proposition}\label{strong-max-p-2}
Let $h\colon\mathbb{R}\times\mathbb{R}\to\mathbb{R}$ be a continuous function, $T$-periodic in the first variable, and satisfying \ref{cond-h-0} and \ref{cond-h-2}. Let $a\in L^{1}_{\mathrm{loc}}(\mathbb{R})\cap L^{\infty}(\mathbb{R})$ be $T$-periodic function satisfying \ref{cond-a-star}. Let $g\colon\mathopen{[}0,+\infty\mathclose{[}\to\mathopen{[}0,+\infty\mathclose{[}$ be a continuous function satisfying \eqref{eq:cond-g-star}.
If at least one of the integrals
\begin{equation}\label{integr-strong-max-p-2}
\int_{0}^{\varepsilon} \dfrac{\mathrm{d}s}{\overline{h}(\underline{H}^{-1}_{\mathrm{l}}(\lambda \|a^{-}\|_{\infty}G(s)))}, 
\quad
\int_{0}^{\varepsilon} \dfrac{\mathrm{d}s}{\overline{h}(\underline{H}^{-1}_{\mathrm{r}}(\lambda \|a^{-}\|_{\infty}G(s)))}
\end{equation}
diverges (for every $\varepsilon>0$ sufficiently small), then every non-trivial $T$-periodic solution $(u,v)$ of \eqref{eq:system-S-tilde} is such that $u(t)>0$ for all $t\in\mathbb{R}$.
\end{proposition}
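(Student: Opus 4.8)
The plan is to argue by contradiction following the scheme of Proposition~\ref{strong-max-p}, but to replace the below-linear bound \eqref{cond-strong-max-p} by an Osgood-type argument built on an energy comparison with the autonomous system obtained by freezing the weight at $\|a^{-}\|_{\infty}$ (this is the only place where the extra hypothesis $a\in L^{\infty}$ is used). By the weak maximum principle (Proposition~\ref{weak-max-p}) a non-trivial $T$-periodic solution $(u,v)$ satisfies $u\geq 0$ and solves \eqref{eq:system-S}. Suppose $u(t^{*})=0$ for some $t^{*}$; exactly as in Proposition~\ref{strong-max-p} one deduces $v(t^{*})=0$ as well, and since $u\not\equiv 0$ the open set $\{u>0\}$ is a non-empty union of bounded intervals. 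I would fix one maximal positivity interval $(t_{0},t_{1})$, so that $u(t_{0})=u(t_{1})=0$, $u>0$ on $(t_{0},t_{1})$, and $u'(t_{0})=u'(t_{1})=0$, whence $v(t_{0})=v(t_{1})=0$ by \ref{cond-h-0} and \ref{cond-h-2}.

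First I would extract the sign information encoded in \ref{cond-a-star}. Just to the right of $t_{0}$ one must have $v>0$: if $v\leq 0$ on a right-neighbourhood then $u'=h(t,v)\leq 0$ there, forcing $u\leq u(t_{0})=0$, a contradiction; and since $v(t_{0})=0$, positivity of $v$ forces $v'\geq 0$, i.e.\ $a\leq 0$ a.e.\ near $t_{0}^{+}$, which is consistent with \ref{cond-a-star}. Hence there is $s_{0}\in(t_{0},t_{1}]$ with $u>0$ and $v>0$ on $(t_{0},s_{0})$; symmetrically, to the left of $t_{1}$ one finds an interval on which $u>0$ and $v<0$. These two configurations are precisely what the two one-sided inverses $\underline{H}^{-1}_{\mathrm{r}}$ and $\underline{H}^{-1}_{\mathrm{l}}$ describe, namely the increasing, respectively decreasing, branch of the V-shaped primitive $\underline{H}$ (note $\underline{h}(s)s>0$ for $s\neq0$, so $\underline{H}$ is strictly decreasing on $(-\infty,0]$ and strictly increasing on $[0,+\infty)$). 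It will suffice to reach a contradiction from whichever branch makes the corresponding integral in \eqref{integr-strong-max-p-2} diverge.

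Assume the first integral diverges and work on $(t_{0},s_{0})$. There $v'=-\lambda a\, g(u)\leq \lambda\|a^{-}\|_{\infty}g(u)$, while $u'=h(t,v)\geq \underline{h}(v)$ because $v\geq 0$. Multiplying these and using $\underline{h}(v)\geq 0$, I would estimate
\[
\frac{\mathrm{d}}{\mathrm{d}t}\underline{H}(v) = \underline{h}(v)\,v' \leq \underline{h}(v)\,\lambda\|a^{-}\|_{\infty}g(u) \leq u'\,\lambda\|a^{-}\|_{\infty}g(u) = \frac{\mathrm{d}}{\mathrm{d}t}\bigl(\lambda\|a^{-}\|_{\infty}G(u)\bigr),
\]
and integrating from $t_{0}$, where $u=v=0$, obtain the energy inequality $\underline{H}(v(t))\leq \lambda\|a^{-}\|_{\infty}G(u(t))$. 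Inverting on the increasing branch gives $v\leq \underline{H}^{-1}_{\mathrm{r}}(\lambda\|a^{-}\|_{\infty}G(u))$, hence by monotonicity of $\overline{h}$,
\[
u' = h(t,v) \leq \overline{h}(v) \leq \overline{h}\bigl(\underline{H}^{-1}_{\mathrm{r}}(\lambda\|a^{-}\|_{\infty}G(u))\bigr).
\]
Since $u'>0$ on $(t_{0},s_{0})$, separating variables and substituting $\sigma=u(t)$ yields
\[
s-t_{0} \geq \int_{t_{0}}^{s}\frac{u'(t)\,\mathrm{d}t}{\overline{h}(\underline{H}^{-1}_{\mathrm{r}}(\lambda\|a^{-}\|_{\infty}G(u(t))))} = \int_{0}^{u(s)}\frac{\mathrm{d}\sigma}{\overline{h}(\underline{H}^{-1}_{\mathrm{r}}(\lambda\|a^{-}\|_{\infty}G(\sigma)))}
\]
for every $s\in(t_{0},s_{0})$. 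Choosing $s$ so close to $t_{0}$ that $u(s)$ lies in the range where the integral diverges makes the right-hand side $+\infty$ while the left-hand side is finite, a contradiction. If instead the second integral diverges, the same computation run backward from $t_{1}$ on the interval where $v<0$ (now $\underline{h}(v)\leq 0$, and one inverts on the decreasing branch $\underline{H}^{-1}_{\mathrm{l}}$, with the roles of $\overline{h},\underline{h}$ adapted to negative arguments) produces the contradiction. Either way we are done, since by hypothesis at least one integral diverges.

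The main obstacle is the bookkeeping of signs: the energy comparison only closes because $v$ keeps a constant sign on the one-sided interval adjacent to the boundary zero of $u$, and pairing the correct branch of $\underline{H}^{-1}$ with the correct travel direction is the genuinely delicate point, where \ref{cond-a-star} and the definitions of $\underline{h},\overline{h}$ on the two half-lines are essential. I expect that this whole comparison can be packaged cleanly into the autonomous time-map estimate of Proposition~\ref{strong-max-time-maps}: freezing $a$ at $\|a^{-}\|_{\infty}$ turns \eqref{eq:system-S-tilde} into an autonomous Hamiltonian comparison system whose transit time from the origin is governed exactly by the integrals in \eqref{integr-strong-max-p-2}, so that divergence of either one forces an infinite transit time and hence the impossibility of $u$ touching zero.
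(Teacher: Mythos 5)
Your architecture is the same as the paper's: weak maximum principle (Proposition~\ref{weak-max-p}), contradiction at a zero $t_{0}$ of $u$, sign analysis of $a$ near $t_{0}$, and then the energy comparison $\underline{H}(v)\leq\lambda\|a^{-}\|_{\infty}G(u)$ followed by separation of variables against the divergent integral in \eqref{integr-strong-max-p-2}. The paper simply delegates this last block to Proposition~\ref{strong-max-time-maps}, applied with $k(t,s)=-\lambda a(t)g(s)$ and $\overline{k}(s)=\lambda\|a^{-}\|_{\infty}g(s)$ --- exactly the packaging you anticipate in your final paragraph; your inlined computation is essentially that proposition's proof, and your pairing of the branches $\underline{H}^{-1}_{\mathrm{r}}$, $\underline{H}^{-1}_{\mathrm{l}}$ with the two endpoints of a maximal positivity interval is correct and is what makes divergence of just one integral sufficient in the periodic setting (cf.\ Remark~\ref{rem-A.1}).

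There is, however, a genuine gap, and it sits exactly at the step you yourself call ``the genuinely delicate point''. Your chain is: ($v\leq0$ on a right-neighbourhood of $t_{0}$ is impossible) $\Rightarrow$ ($v>0$ just to the right of $t_{0}$) $\Rightarrow$ (``positivity of $v$ forces $v'\geq0$, i.e.\ $a\leq0$ a.e.\ near $t_{0}^{+}$''). Both implications fail. The first argument only shows that $v$ takes positive values at points arbitrarily close to $t_{0}$; it does not exclude sign oscillation of $v$. The second is a non sequitur: a continuous function with $v(t_{0})=0$ and $v>0$ on $\mathopen{]}t_{0},t_{0}+\delta\mathclose{]}$ need not be nondecreasing (consider $v(t)=(t-t_{0})^{2}\bigl(2+\sin(1/(t-t_{0}))\bigr)$), so you cannot read off the a.e.\ sign of $v'=-\lambda a\,g(u)$ --- hence of $a$ --- from the pointwise sign of $v$; the implication must run the other way. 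This matters because your energy estimate genuinely needs $v\geq0$ on the whole interval: without it, both $\underline{h}(v)\geq0$ and $h(t,v)\geq\underline{h}(v)$ break down. The repair is the paper's direct use of \ref{cond-a-star}: since the $J_{n}$ are finitely many closed intervals, for $\delta$ small either $a>0$ a.e.\ on $\mathopen{[}t_{0},t_{0}+\delta\mathclose{]}$ or $a\leq0$ a.e.\ there; in the first case $v(t)=-\lambda\int_{t_{0}}^{t}a(\xi)g(u(\xi))\,\mathrm{d}\xi<0$ on $\mathopen{]}t_{0},t_{0}+\delta\mathclose{]}$ (using $g(u)>0$), hence $u'=h(t,v)<0$ and $u<u(t_{0})=0$, a contradiction; so the second case holds, and then $v'\geq0$ a.e.\ yields $v\geq0$, which is all your estimate requires (strict positivity of $v$ is never needed). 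With this step fixed --- and with the substitution taken as a limit from $t_{0}+\epsilon$, since the integrand is singular at $\sigma=0$ --- the rest of your argument is sound and coincides with the paper's route.
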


\begin{proof}
Let $(u,v)$ be a non-trivial $T$-periodic solution of \eqref{eq:system-S-tilde}. By Proposition~\ref{weak-max-p}, we have that $u(t)\geq 0$ for all $t\in\mathbb{R}$ and thus $(u,v)$ solves \eqref{eq:system-S}. By contradiction, we suppose that there exists $t_{0}\in\mathbb{R}$ such that $u(t_{0})=0$. 
Without loss of generality we can suppose that $t_{0}$ is such that $u(t)>0$ in $\mathopen{]}t_{0},t_{0}+\delta\mathclose{]}$ or in $\mathopen{[}t_{0}-\delta,t_{0}\mathclose{[}$, for some $\delta>0$. Let us consider the first alternative (the other is analogous reasoning backwards). Recalling hypothesis \ref{cond-a-star}, we prove that we can take $\delta>0$ sufficiently small such that $a\leq0$ a.e.~in $\mathopen{[}t_{0},t_{0}+\delta\mathclose{]}$. Indeed, if $a>0$ a.e.~in $\mathopen{[}t_{0},t_{0}+\delta\mathclose{]}$ (for all $\delta>0$ small), then $v(t) = -\lambda \int_{t_{0}}^{t} a(\xi) g(u(\xi))\,\mathrm{d}\xi < 0$, for all $t\in\mathopen{]}t_{0},t_{0}+\delta\mathclose{]}$,
and so $u'(t)<0$ (by \eqref{cond-h-1} and the first equation in \eqref{eq:system-S-tilde}) and $u(t)<0$ for all $t\in\mathopen{]}t_{0},t_{0}+\delta\mathclose{]}$ (since $u(t_{0})=0$), a contradiction. Notice also that $a\not\equiv0$ in $\mathopen{[}t_{0},t_{0}+\delta\mathclose{]}$, otherwise $u\equiv0$ in $\mathopen{[}t_{0},t_{0}+\delta\mathclose{]}$ (since $u(t_{0})=0$), contradicting the assumptions.

We conclude that $0 \leq -\lambda a(t) g(u(t)) = \lambda a^{-}(t) g(u(t)) \leq \lambda \|a^{-}\|_{\infty} G(u(t))$ for a.e.~$t\in\mathopen{]}t_{0},t_{0}+\delta\mathclose{]}$. We can now apply Proposition~\ref{strong-max-time-maps}, with $k(t,s)=-\lambda a(t) g(s)$ and $\overline{k}(s)=\lambda \|a^{-}\|_{\infty}G(s)$, to complete the proof.
\end{proof}

In Appendix~\ref{appendix-A} we give a more general version of the maximum principles which may have an independent interest. The simplified form given in Proposition~\ref{strong-max-p} is however enough for our applications in the next sections.

\section{Abstract setting and strategy of the proof}\label{section-3}

We start by defining the suitable spaces and operators for the application of the coincidence degree theory by Mawhin (we refer the reader to \cite{GaMa-1977,Ma-1979,Ma-1993} for more details).

Given $T>0$, let $X:= \mathcal{C}_{T}$ be the Banach space of continuous $T$-periodic functions $(u,v)\colon\mathbb{R}\to \mathbb{R}^{2}$, endowed with the norm
\begin{equation*}
\|(u,v)\|_{\infty}:=\max \bigl{\{}\|u\|_{\infty}, \|v\|_{\infty} \bigr{\}},
\qquad \text{where $\|w\|_{\infty}:=\max_{t\in\mathbb{R}} |w(t)|$,}
\end{equation*}
and let $Z:= L^{1}_{T}$ be the Banach space of locally integrable and $T$-periodic functions $(x,y)\colon\mathbb{R}\to\mathbb{R}^{2}$ with the $L^{1}$-norm 
\begin{equation*}
\|(x,y)\|_{L^{1}_{T}}:=\max \biggl{\{}\int_{0}^{T} |x(t)|\,\mathrm{d}t, \int_{0}^{T} |y(t)|\,\mathrm{d}t \biggr{\}}.
\end{equation*}
On the set
\begin{equation*}
\mathrm{dom}\,L := \Bigl{\{} (u,v)\in X \colon (u,v) \text{ is absolutely continuous} \Bigr{\}}
\end{equation*}
we define the linear operator $L\colon \mathrm{dom}\,L \to Z$ as
\begin{equation*}
[L(u,v)](t) := (u'(t),v'(t)),\quad t\in \mathbb{R}.
\end{equation*}
We observe that $\ker L \equiv {\mathbb{R}^{2}}$ is made up of the constant functions $(u,v)$ and
\begin{equation*}
\mathrm{Im}\,L = \biggl\{(x,y)\in Z \colon \int_{0}^{T} x(t) \, \mathrm{d}t=0, \  \int_{0}^{T} y(t) \, \mathrm{d}t=0\biggr\}.
\end{equation*}
Next, we introduce the projections $P\colon X \to \ker L$ and $Q\colon Z \to \mathrm{coker}\,L$ as
\begin{equation*}
P, Q \colon (u,v) \mapsto \left(\dfrac{1}{T}\int_{0}^{T}u(t) \, \mathrm{d}t,\dfrac{1}{T}\int_{0}^{T}v(t) \, \mathrm{d}t\right).
\end{equation*}
Hence, $\mathrm{coker}\,L \equiv {\mathbb{R}^{2}}$ and $\ker P$
is made up of the periodic functions $(u,v)$ with mean value zero.
Moreover, the right inverse linear operator $K_{P}$ is the map which,
to every $(x,y)\in \mathrm{Im}\, L$, associates the unique solution $(u,v)$ of
\begin{equation*}
\begin{cases}
\, u'(t)=x(t),
\\
\,v'(t)=y(t),
\end{cases}
\quad 
\begin{cases}
\, u(0)=u(T), 
\\
\, v(0)=v(T),
\end{cases}
\quad \int_{0}^{T} u(t) \, \mathrm{d}t = 0, \; \int_{0}^{T} v(t) \, \mathrm{d}t = 0.
\end{equation*}
At last, we fix the identity vector field in ${\mathbb{R}^{2}}$ as a linear isomorphism $J \colon \mathrm{coker}\,L \to \ker L$. 

Let us denote by $N \colon X \to Z$ the Nemytskii operator induced by $h$ and $-f$, that is
\begin{equation*}
[N(u,v)](t):= (h(t,v(t)),-f(t,u(t))), \quad t\in \mathbb{R}.
\end{equation*}

All the structural assumptions required by Mawhin's theory (that is $L$ is Fredholm of index zero and $N$ is $L$-completely continuous) are satisfied by standard facts. As a consequence, all the solution of \eqref{eq:system-S-tilde} are solutions of the coincidence equation
\begin{equation}\label{eq-2.11}
L(u,v) = N(u,v), \quad (u,v)\in \mathrm{dom}\,L,
\end{equation}
and viceversa; moreover \eqref{eq-2.11} is equivalent to the fixed point problem
\begin{equation}\label{eq-2.3}
(u,v) = \Phi(u,v):= P(u,v) + JQN(u,v) + K_{P}(\mathrm{Id}-Q)N(u,v), \quad (u,v)\in X,
\end{equation}
and we can apply the Leray--Schauder degree theory to the operator equation \eqref{eq-2.3}. We recall that, for an open and bounded set $\Omega\subseteq X$ such that
\begin{equation*}
L(u,v) \neq N(u,v), \quad \text{for all $(u,v)\in \mathrm{dom}\,L \cap \partial\Omega$,}
\end{equation*}
the \textit{coincidence degree of $L$ and $N$ in $\Omega$} is defined as
\begin{equation*}
\mathrm{D}_{L}(L-N,\Omega):= \mathrm{deg}_{\mathrm{LS}}(\mathrm{Id}-\Phi,\Omega,0),
\end{equation*}
where ``$\mathrm{deg}_{\mathrm{LS}}$'' denotes the Leray--Schauder degree; in the sequel we also denote by ``$\mathrm{deg}_{\mathrm{B}}$'' the Brouwer degree. Notice that $\mathrm{D}_{L}$ is independent on the choice of $P$ and $Q$, and also of $J$, provided that we have fixed an orientation on $\ker L$ and $\mathrm{coker}\,L$ and considered for $J$ only orientation-preserving isomorphisms. The coincidence degree has all the standard properties of the Leray--Schauder degree, in particular, if it holds that $\mathrm{D}_{L}(L-N,\Omega)\neq0$, then \eqref{eq-2.11} has at least one solution in $\Omega$.

In the sequel, we will make use of the following two results regarding the computation of the degree via the homotopy invariance property of $\mathrm{D}_{L}$ (cf.~\cite{FeZa-2015,Ma-1969,Ma-1972}).

\begin{lemma}[Mawhin, 1972]\label{lemma_Mawhin}
Let $L$ and $N$ be as above and let $\Omega\subseteq X$ be an open and bounded set.
Suppose that
\begin{equation*}
L(u,v) \neq \vartheta N(u,v), \quad \text{for all $(u,v)\in \mathrm{dom}\,L \cap \partial\Omega$ and $\vartheta\in\mathopen{]}0,1\mathclose{]}$,}
\end{equation*}
and
\begin{equation*}
QN(u,v)\neq0, \quad \text{for all $(u,v)\in \partial\Omega \cap \ker L$.}
\end{equation*}
Then
\begin{equation*}
\mathrm{D}_{L}(L-N,\Omega) = \mathrm{deg}_{\mathrm{B}}(-JQN|_{\ker L},\Omega \cap \ker L,0).
\end{equation*}
\end{lemma}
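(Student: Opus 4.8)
This is Mawhin's classical continuation-type lemma, reducing the coincidence degree to a finite-dimensional Brouwer degree on the kernel. The strategy is to apply the homotopy invariance property of the coincidence degree along the linear homotopy $\vartheta \mapsto \vartheta N$ for $\vartheta \in \mathopen{[}0,1\mathclose{]}$, and then to identify the degree at $\vartheta=0$ with a degree computed on $\ker L$.

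\emph{First step: admissibility of the homotopy.}
I would introduce the homotopy $\mathscr{H}\colon \mathopen{[}0,1\mathclose{]} \times X \to X$ defined by
\begin{equation*}
\mathscr{H}(\vartheta,(u,v)) := P(u,v) + JQN(u,v) + \vartheta K_{P}(\mathrm{Id}-Q)N(u,v),
\end{equation*}
so that $\mathscr{H}(1,\cdot) = \Phi$ and $\mathscr{H}(0,\cdot) = P + JQN$. Since $N$ is $L$-completely continuous, $\mathscr{H}$ is a compact homotopy and the Leray--Schauder degree $\mathrm{deg}_{\mathrm{LS}}(\mathrm{Id}-\mathscr{H}(\vartheta,\cdot),\Omega,0)$ is well defined and constant in $\vartheta$ provided $0 \notin (\mathrm{Id}-\mathscr{H}(\vartheta,\cdot))(\partial\Omega)$ for every $\vartheta$. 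The crux is therefore to verify that no fixed point of $\mathscr{H}(\vartheta,\cdot)$ lies on $\partial\Omega$. I would show that a fixed point $(u,v) = \mathscr{H}(\vartheta,(u,v))$ with $\vartheta \in \mathopen{]}0,1\mathclose{]}$ forces, after applying $L$ and using the splitting of $Z$ into $\mathrm{Im}\,L$ and its complement together with $LK_{P} = \mathrm{Id}$ on $\mathrm{Im}\,L$, that $QN(u,v)=0$ and $L(u,v) = \vartheta N(u,v)$; this is excluded by the first hypothesis. For $\vartheta=0$, a fixed point reduces to $(u,v) = P(u,v) + JQN(u,v)$, which gives $(u,v)\in\ker L$ and $QN(u,v)=0$, excluded by the second hypothesis. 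Thus the homotopy is admissible on $\partial\Omega$ for all $\vartheta\in\mathopen{[}0,1\mathclose{]}$.

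\emph{Second step: reduction to the kernel.}
By homotopy invariance,
\begin{equation*}
\mathrm{D}_{L}(L-N,\Omega) = \mathrm{deg}_{\mathrm{LS}}(\mathrm{Id}-\Phi,\Omega,0) = \mathrm{deg}_{\mathrm{LS}}(\mathrm{Id}-P-JQN,\Omega,0).
\end{equation*}
The operator $\mathrm{Id}-P-JQN$ maps $X$ into $\ker L$ modulo the projection structure: writing $X = \ker L \oplus \ker P$ and noting that $\mathrm{Id}-P$ is the projection onto $\ker P$ while $JQN$ takes values in $\ker L$, the map $\mathrm{Id}-P-JQN$ is a compact perturbation of the identity whose nonlinear part is finite-dimensional (ranging in $\ker L \cong \mathbb{R}^{2}$). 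I would then invoke the standard reduction property of the Leray--Schauder degree for maps whose compact part has finite-dimensional range: the degree equals the Brouwer degree of the restriction to $\ker L$. Concretely, one checks that $(\mathrm{Id}-P-JQN)(u,v)=0$ forces $(u,v)\in\ker L$, so all zeros lie in $\ker L \cap \Omega$, and the reduction formula yields
\begin{equation*}
\mathrm{deg}_{\mathrm{LS}}(\mathrm{Id}-P-JQN,\Omega,0) = \mathrm{deg}_{\mathrm{B}}(-JQN|_{\ker L},\Omega\cap\ker L,0).
\end{equation*}

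\emph{Main obstacle.}
The principal technical point is the second step: justifying that the Leray--Schauder degree of $\mathrm{Id}-P-JQN$ collapses to the Brouwer degree of $-JQN$ restricted to $\ker L$, with the correct sign and orientation. This requires the finite-dimensional reduction property of Leray--Schauder degree together with the careful bookkeeping that $\mathrm{Id}-P$ contributes trivially on $\ker L$ (where $P$ is the identity) so that the restriction of $\mathrm{Id}-P-JQN$ to $\ker L$ is exactly $-JQN|_{\ker L}$. Everything else is a routine verification using the defining relations $LK_{P}=\mathrm{Id}|_{\mathrm{Im}\,L}$, $QL=0$, and the direct sum decompositions of $X$ and $Z$; since this is a known result of Mawhin, I would keep the argument brief and cite the standard references already listed.
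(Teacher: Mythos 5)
Your proposal is correct, and it reconstructs precisely the classical argument behind this result: the paper itself offers no proof of Lemma~\ref{lemma_Mawhin}, quoting it as Mawhin's 1972 theorem with references (\cite{FeZa-2015,Ma-1969,Ma-1972}), and your homotopy $\mathscr{H}(\vartheta,\cdot)=P+JQN+\vartheta K_{P}(\mathrm{Id}-Q)N$ followed by the finite-dimensional reduction property of the Leray--Schauder degree is exactly the standard proof found in those sources. One small precision: in the admissibility step, applying $L$ to the fixed-point identity only yields $L(u,v)=\vartheta(\mathrm{Id}-Q)N(u,v)$; to extract $QN(u,v)=0$ (and hence $L(u,v)=\vartheta N(u,v)$) you must also apply $P$ to that identity, using $PK_{P}=0$ and the fact that $P$ is the identity on $\ker L$, so that the $\ker L$-component reads $JQN(u,v)=0$. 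This is the same bookkeeping you already use at $\vartheta=0$ and in the reduction step, so the gap is purely expository; with that adjustment the argument is complete.
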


\begin{lemma}\label{lemma-2.2}
Let $L$ and $N$ be as above and let $\Omega\subseteq X$ be an open and bounded set.
Suppose that $(w_{1},w_{2})\neq0$ is a vector such that
\begin{equation*}
L(u,v) \neq N(u,v) + \alpha (w_{1},w_{2}), \quad \text{for all $(u,v)\in \mathrm{dom}\,L \cap \partial\Omega$ and $\alpha \geq 0$.}
\end{equation*}
Then
\begin{equation*}
\mathrm{D}_{L}(L-N,\Omega) = 0.
\end{equation*}
\end{lemma}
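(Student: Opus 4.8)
The plan is to deduce the vanishing of the degree from the homotopy invariance of $\mathrm{D}_{L}$, exploiting the fact that the admissible direction $(w_{1},w_{2})$ is available for \emph{every} $\alpha\geq0$, so that the parameter may be pushed all the way to $+\infty$. First I would introduce, for $\alpha\geq0$, the operator $N_{\alpha}\colon X\to Z$ given by $N_{\alpha}(u,v):=N(u,v)+\alpha(w_{1},w_{2})$, where $(w_{1},w_{2})$ is regarded as a constant element of $Z=L^{1}_{T}$. Since it differs from $N$ by a fixed constant, $N_{\alpha}$ is still $L$-completely continuous, and the coincidence equation $L(u,v)=N_{\alpha}(u,v)$ is equivalent to the fixed point problem $(u,v)=\Phi_{\alpha}(u,v)$ with
\begin{equation*}
\Phi_{\alpha}(u,v):=P(u,v)+JQN_{\alpha}(u,v)+K_{P}(\mathrm{Id}-Q)N_{\alpha}(u,v).
\end{equation*}
In particular $\Phi_{0}=\Phi$ and $\mathrm{D}_{L}(L-N,\Omega)=\mathrm{deg}_{\mathrm{LS}}(\mathrm{Id}-\Phi_{0},\Omega,0)$.

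The key step is to show that $L(u,v)=N_{\alpha}(u,v)$ has \emph{no} solution in $\overline{\Omega}$ once $\alpha$ is large enough. For this I would project onto $\mathrm{coker}\,L$: any solution must satisfy the solvability condition $QN_{\alpha}(u,v)=0$, that is, $QN(u,v)+\alpha(w_{1},w_{2})=0$, where I used that $Q$ leaves the constant vector $(w_{1},w_{2})$ fixed. Since $\Omega$ is bounded there is $R>0$ with $\|(u,v)\|_{\infty}\leq R$ on $\overline{\Omega}$; the continuity of $h$ then bounds $\int_{0}^{T}|h(t,v(t))|\,\mathrm{d}t$, while the explicit form of $f$ together with $a\in L^{1}_{\mathrm{loc}}$ gives $\int_{0}^{T}|f(t,u(t))|\,\mathrm{d}t\leq\lambda\|a\|_{L^{1}(0,T)}\max_{s\in\mathopen{[}0,R\mathclose{]}}g(s)+RT$. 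Hence there is a constant $C>0$ with $|QN(u,v)|\leq C$ uniformly on $\overline{\Omega}$, and the solvability condition forces $\alpha\,|(w_{1},w_{2})|\leq C$. Choosing $\bar{\alpha}>C/|(w_{1},w_{2})|$ (legitimate since $(w_{1},w_{2})\neq0$) then rules out any solution in $\overline{\Omega}$ for $\alpha=\bar{\alpha}$, so $\Phi_{\bar{\alpha}}$ is fixed-point free on $\overline{\Omega}$ and, by the solution property of the degree, $\mathrm{deg}_{\mathrm{LS}}(\mathrm{Id}-\Phi_{\bar{\alpha}},\Omega,0)=0$.

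Finally I would invoke homotopy invariance along $\alpha\in\mathopen{[}0,\bar{\alpha}\mathclose{]}$. The standing hypothesis that $L(u,v)\neq N(u,v)+\alpha(w_{1},w_{2})$ on $\mathrm{dom}\,L\cap\partial\Omega$ for every $\alpha\geq0$ ensures that $\mathrm{Id}-\Phi_{\alpha}$ never vanishes on $\partial\Omega$; since $(\alpha,(u,v))\mapsto\Phi_{\alpha}(u,v)$ is a compact homotopy, the Leray--Schauder degree is constant in $\alpha$, whence
\begin{equation*}
\mathrm{D}_{L}(L-N,\Omega)=\mathrm{deg}_{\mathrm{LS}}(\mathrm{Id}-\Phi_{0},\Omega,0)=\mathrm{deg}_{\mathrm{LS}}(\mathrm{Id}-\Phi_{\bar{\alpha}},\Omega,0)=0.
\end{equation*}
The only point requiring genuine care — rather than a deep difficulty — is the uniform a priori bound on $QN$ over $\overline{\Omega}$: because $a$ is merely $L^{1}_{\mathrm{loc}}$, one estimates the second component of $N$ in the $L^{1}$-norm (not the sup-norm), making sure the weight does not spoil integrability. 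Everything else is the classical ``sweep the parameter to infinity'' mechanism behind this degree-vanishing lemma.
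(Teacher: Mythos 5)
Your overall strategy --- embed the problem in the family $L(u,v)=N(u,v)+\alpha(w_{1},w_{2})$, show that for $\alpha$ large there are no solutions in $\overline{\Omega}$, and conclude by homotopy invariance plus the existence property of the degree --- is exactly the classical argument behind this lemma (the paper itself does not prove it, but quotes it from \cite{Ma-1969,Ma-1972,FeZa-2015}). Your a priori bound $|QN(u,v)|\leq C$ on $\overline{\Omega}$ and the verification that the homotopy is admissible on $\partial\Omega$ are handled correctly.

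There is, however, a genuine gap: you interpret $(w_{1},w_{2})$ as a \emph{constant} element of $Z$, and your key step uses $Q(w_{1},w_{2})=(w_{1},w_{2})\neq0$, so that the solvability condition $QN(u,v)+\alpha Q(w_{1},w_{2})=0$ forces $\alpha\leq C/|(w_{1},w_{2})|$. This does not prove the lemma in the form the paper needs it: in Section~3 the lemma is applied with $(w_{1},w_{2})=-(0,w)$, where $w$ is a non-constant, merely locally integrable $T$-periodic function supported in $\bigcup_{n}J_{n}$. For a general nonzero $(w_{1},w_{2})\in Z$ the mean value $Q(w_{1},w_{2})$ may vanish, and then your projection onto $\mathrm{coker}\,L$ gives no information on $\alpha$ at all (the solvability condition reduces to $QN(u,v)=0$, which is independent of $\alpha$), so the ``no solutions for large $\alpha$'' step collapses. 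The missing case is handled with the complementary part of the splitting: if $Q(w_{1},w_{2})=0$, then $(w_{1},w_{2})\in\mathrm{Im}\,L$ and $K_{P}(w_{1},w_{2})\neq0$ (a $T$-periodic zero-mean function whose derivative vanishes a.e.\ is identically zero), and the fixed point identity gives
\begin{equation*}
\alpha\,K_{P}(w_{1},w_{2})=(u,v)-P(u,v)-JQN(u,v)-K_{P}(\mathrm{Id}-Q)N(u,v),
\end{equation*}
whose right-hand side is bounded in $X$ uniformly for $(u,v)\in\overline{\Omega}$ and $\alpha\geq0$; hence $\alpha\|K_{P}(w_{1},w_{2})\|_{\infty}\leq C'$ and again no solutions exist in $\overline{\Omega}$ for $\alpha$ large. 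With this second case added (or, alternatively, under the explicit extra hypothesis $Q(w_{1},w_{2})\neq0$, which does hold in the paper's application since there $w\geq0$ and $w\not\equiv0$), your argument becomes a complete proof.
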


\medskip

Given this abstract setting, we can now present the strategy of the proof of Theorem~\ref{th-main} and Theorem~\ref{th-main-2}.
Let us first consider the parameter-dependent coincidence equation
\begin{equation*}
L(u,v) = \vartheta N(u,v), \quad (u,v)\in \mathrm{dom}\,L, \quad \vartheta\in\mathopen{[}0,1\mathclose{]},
\end{equation*}
which is equivalent to the $T$-periodic problem associated with
\begin{equation}\label{sys-tilde-S-theta}
\begin{cases}
\, u' = \vartheta h(t,v),
\\
\, v' = -\vartheta f(t,u).
\end{cases}
\end{equation}
Let $(u,v)$ be a $T$-periodic solution of \eqref{sys-tilde-S-theta} for some $\vartheta \in \mathopen{[}0,1\mathclose{]}$.  By the weak maximum principle (cf.~Proposition~\ref{weak-max-p}), since $-f(t,s)<0$ for a.e.~$t\in\mathbb{R}$ and for all $s\in\mathopen{]}-\infty,0\mathclose{[}$, we have that $u(t)\geq 0$ for every $t\in \mathbb{R}$. Moreover, by hypothesis \eqref{main-cond-zero}, from the strong maximum principle (cf.~Proposition~\ref{strong-max-p}), it follows that $u(t) > 0$ for all $t\in\mathbb{R}$. Therefore, we can focus our attention on the $T$-periodic solutions $(u,v)$ of
\begin{equation}\label{sys-tilde-S-theta}
\begin{cases}
\, u' = \vartheta h(t,v),
\\
\, v' = -\vartheta a(t)g(u),
\end{cases}
\end{equation}
with $u(t)>0$ for all $t\in\mathbb{R}$.

In Section~\ref{section-4.1}, we show that the hypothesis \eqref{main-cond-zero} of Theorem~\ref{th-main} (and of Theorem~\ref{th-main-2}) ensures the following crucial property for the application of Lemma~\ref{lemma_Mawhin}.
\begin{enumerate}[leftmargin=35pt,labelsep=10pt,label=\textup{$(\mathscr{H}_{r_{0}})$}]
\item There exists $r_{0}>0$ such that for all $r\in\mathopen{]}0,r_{0}\mathclose{]}$ and for all $\vartheta \in\mathopen{]}0,1\mathclose{]}$ there is no $T$-periodic solution $(u,v)$ of \eqref{sys-tilde-S-theta} such that $u(t)>0$ for all $t\in\mathbb{R}$ and $\|u\|_{\infty}=r$.
\label{cond-H-r0}
\end{enumerate}
As a consequence, for the open and bounded set
\begin{equation*}
\Omega_{r_{0}}:= \bigl{\{} (u,v)\in X \colon \|(u,v)\|_{\infty} < r_{0} \bigr{\}},
\end{equation*}
it holds that
\begin{equation*}
L(u,v) \neq \vartheta N(u,v),
\quad \text{for all $(u,v)\in \mathrm{dom}\,L \cap \partial\Omega_{r_{0}}$ and $\vartheta\in \mathopen{]}0,1\mathclose{]}$.}
\end{equation*}

Consider now $(u,v)\in \partial\Omega_{r_{0}} \cap \ker L$. In this case, $(u,v)\equiv (U,V)\in {\mathbb{R}^{2}}$, with $|U| = r_{0}$ or $|V|=r_{0}$, and
\begin{equation*}
-JQN(U,V) = \left(-\dfrac{1}{T} \int_{0}^{T} h(t,V)\, \mathrm{d}t, \dfrac{1}{T} \int_{0}^{T} f(t,U)\, \mathrm{d}t\right) =: (-h^{\#}(V),f^{\#}(U)).
\end{equation*}
Notice also that $\Omega_{r_{0}}\cap \ker L = \mathopen{]}-r_{0},r_{0}\mathclose{[}\times\mathopen{]}-r_{0},r_{0}\mathclose{[}$.
By the definition of $f$, we have that
\begin{equation*}
f^{\#}(s)= \dfrac{1}{T} \int_{0}^{T} f(t,s) \, \mathrm{d}t =
\begin{cases}
\, - \dfrac{g(s)}{T} \displaystyle\int_{0}^{T} a(t) \, \mathrm{d}t, & \text{if $s >  0$;} 
\\
\, s, & \text{if $s \leq 0$.}
\end{cases}
\end{equation*}
Therefore, by hypotheses \ref{cond-h-2}, \eqref{eq:cond-g-star} and \eqref{eq:cond-a-diesis}, it follows that $QNu\neq 0$ for each $u\in \partial\Omega_{r_{0}} \cap \ker L$ and, moreover,
\begin{equation*}
\mathrm{deg}_{\mathrm{B}} ((-h^{\#},f^{\#}), \mathopen{]}-r_{0},r_{0}\mathclose{[}\times \mathopen{]}-r_{0},r_{0}\mathclose{[},0) =-1,
\end{equation*}
by a careful study of the signs of $(-h^{\#},f^{\#})$ in each of the four quadrants.
By Lemma~\ref{lemma_Mawhin} we conclude that
\begin{equation}\label{eq-deg1}
\mathrm{D}_{L}(L-N,\Omega_{r_{0}}) = -1.
\end{equation}

Secondly we study the parameter-dependent operator equation
\begin{equation}\label{eq-2.14}
L(u,v) = N(u,v) - \alpha (0,w), \quad (u,v)\in \mathrm{dom}\,L,
\end{equation}
for $\alpha \geq 0$ and a non-negative locally integrable $T$-periodic function $w$ with $w\equiv 0$ in $(\mathbb{R}/T\mathbb{Z}) \setminus \bigcup_{n=1}^{N}J_{n}$. Equation \eqref{eq-2.14} is equivalent to the $T$-periodic problem associated with
\begin{equation}\label{sys-tilde-S-alpha}
\begin{cases}
\, u'=h(t,v), \\
\, v'=-\lambda f(t,u(t))-\alpha w(t).
\end{cases}
\end{equation}
Let $(u,v)$ be a $T$-periodic solution of \eqref{sys-tilde-S-alpha} for some $\alpha \in \mathopen{[}0,+\infty\mathclose{[}$. Observing that $-f(t,s)-\alpha w(t) = s -\alpha w(t) < 0$ for a.e.~$t\in\mathbb{R}$ and for all $s\in\mathopen{]}-\infty,0\mathclose{[}$, by the weak maximum principle, we deduce that $u(t)\geq0$ for all $t\in\mathbb{R}$. Hence, we can deal with the $T$-periodic solutions $(u,v)$ of
\begin{equation}\label{syst-alpha-2}
\begin{cases}
\, u'=h(t,v), \\
\, v'=-\lambda a(t) g(u(t)) -\alpha w(t),
\end{cases}
\end{equation}
with $u(t)\geq 0$ for all $t\in\mathbb{R}$.
We stress that in this case we do not exploit the strong maximum principle, but we just need the weak version.

The main results of Section~\ref{section-4.2} assure the application of Lemma~\ref{lemma-2.2}. 
Indeed, in Section~\ref{section-4.2} we prove that under the hypotheses of Theorem~\ref{th-main} we can choose $R>r_{0}$ such that:
\begin{enumerate}[leftmargin=39pt,labelsep=10pt,label=\textup{$(\mathscr{H}_{R}^{\lambda^{*}})$}]
\item There exists $\lambda^{*}=\lambda^{*}(R)$ such that, for every $\alpha\geq 0$, for every non-negative locally integrable $T$-periodic function $w$ with $w\equiv 0$ in $(\mathbb{R}/T\mathbb{Z})\setminus\bigcup_{n=1}^{N}J_{n}$, for every $\lambda>\lambda^{*}$, there are no $T$-periodic solutions $(u,v)$ of system \eqref{syst-alpha-2} with $0\leq u(t) \leq \|u\|_{\infty}=R$ for all $t\in\mathbb{R}$;
\label{cond-H-lambda}
\end{enumerate}
while under the hypotheses of Theorem~\ref{th-main-2} we have the following:
\begin{enumerate}[leftmargin=39pt,labelsep=10pt,label=\textup{$(\mathscr{H}^{\lambda})$}]
\item For every $\lambda>0$, there exists $R>0$ such that, for every $\alpha\geq 0$, for every non-negative locally integrable $T$-periodic function $w$ with $w\equiv 0$ in $(\mathbb{R}/T\mathbb{Z})\setminus\bigcup_{n=1}^{N}J_{n}$, there are no $T$-periodic solutions $(u,v)$ of system \eqref{syst-alpha-2} with $0\leq u(t) \leq \|u\|_{\infty}=R$ for all $t\in\mathbb{R}$.
\label{cond-H-lambda-00}
\end{enumerate}
For now on, we fix $R$ and $\lambda$ as above.
An integration in $\mathopen{[}0,T\mathclose{]}$ of the second equation in \eqref{syst-alpha-2} leads to
\begin{equation*}
\lambda \int_{0}^{T} a(t) g(u(t))\,\mathrm{d}t = - \alpha \int_{0}^{T} w(t) \, \mathrm{d}t = - \alpha \|w\|_{L^{1}}.
\end{equation*}
Therefore, we deduce that there are no $T$-periodic solutions $(u,v)$ of \eqref{syst-alpha-2} with $\|u\|_{\infty}\leq R$ if $\alpha\geq \alpha_{0}$, where
\begin{equation*}
\alpha_{0} > \dfrac{\lambda \|a\|_{L^{1}} \overline{g}(R)}{\|w\|_{L^{1}}}.
\end{equation*}
Moreover, for $\alpha\in\mathopen{[}0,\alpha_{0}\mathclose{]}$, let $(u,v)$ be a $T$-periodic solutions of \eqref{syst-alpha-2} with $\|u\|_{\infty}\leq R$ and denote by $t^{*}\in\mathbb{R}$ a maximum point of $u$. Clearly $u'(t^{*})=0$ and thus $v(t^{*})=0$, by hypotheses \ref{cond-h-0} and \ref{cond-h-2}.
Integrating again the second equation in \eqref{syst-alpha-2}, we have
\begin{align*}
|v(t)|
&= \biggl{|} v(t^{*})+\lambda \int_{t^{*}}^{t} a(\xi) g(u(\xi))\,\mathrm{d}\xi + \alpha \int_{t^{*}}^{t} w(\xi) \, \mathrm{d}\xi\biggr{|}
\\
&\leq \lambda \|a\|_{L^{1}} \overline{g}(R) + \alpha_{0} \|w\|_{L^{1}},
\end{align*}
for all $t\in\mathbb{R}$.
Let
\begin{equation*}
R' > \max\bigl{\{} r_{0}, \lambda \|a\|_{L^{1}} \overline{g}(R) + \alpha_{0} \|w\|_{L^{1}}\bigr{\}}.
\end{equation*}
Hence, for the open and bounded set
\begin{equation*}
\Omega_{R,R'}:= \bigl{\{} (u,v)\in X \colon \|u\|_{\infty}<R, \|v\|_{\infty} < R' \bigr{\}},
\end{equation*}
it holds that
\begin{equation*}
L(u,v) \neq  N(u,v) -\alpha (0,w),
\quad \text{for all $(u,v)\in \mathrm{dom}\,L \cap \partial\Omega_{R,R'}$ and $\alpha\geq 0$.}
\end{equation*}
According to Lemma~\ref{lemma-2.2} we have that
\begin{equation}\label{eq-deg0}
\mathrm{D}_{L}(L-N,\Omega_{R,R'}) = 0.
\end{equation}
In conclusion, from \eqref{eq-deg1}, \eqref{eq-deg0}, and the additivity property of the coincidence degree, we find that
\begin{equation*}
\mathrm{D}_{L}(L-N, \Omega_{R,R'} \setminus \mathrm{cl}(\Omega_{r_{0}})) = 1.
\end{equation*}
This ensures the existence of a solution $(\tilde{u},\tilde{v})$ to \eqref{eq-2.11} with $(\tilde{u},\tilde{v})\in \Omega_{R,R'} \setminus \mathrm{cl}(\Omega_{r_{0}})$. Hence, $(\tilde{u},\tilde{v})$ is a non-trivial solution of system \eqref{eq:system-S}.
Since $(\tilde{u},\tilde{v})\not\equiv(0,0)$, by the strong maximum principle we have that $\tilde{u}(t)>0$ for all $t\in \mathbb{R}$. The proofs of Theorem~\ref{th-main} and of Theorem~\ref{th-main-2} are complete.
\hfill\qed

\section{Qualitative results}\label{section-4}

In this section, we present some qualitative results concerning ``small'' and ``large'' solutions to \eqref{sys-tilde-S-theta} and \eqref{syst-alpha-2}, respectively. As illustrated in Section~\ref{section-3}, these results are essential for the application of the coincidence degree theory to obtain the existence of periodic solutions to \eqref{eq:system-S}. 

\subsection{Small solutions}\label{section-4.1}

The following result shows that the ``superlinear condition'' \eqref{main-cond-zero} of Theorem~\ref{th-main} guarantees the non-existence of ``small'' $T$-periodic solutions $(u,v)$ of the planar system \eqref{eq:system-S} with $u(t)>0$ for all $t\in\mathbb{R}$, more precisely the validity of \ref{cond-H-r0}. Notice that condition \ref{cond-a-star} is not needed.

\begin{lemma}\label{lem-small-sol}
Let $h\colon\mathbb{R}\times\mathbb{R}\to\mathbb{R}$ be a continuous function, $T$-periodic in the first variable, and satisfying \ref{cond-h-0} and \ref{cond-h-2}.
Let $a\colon\mathbb{R}\to\mathbb{R}$ be a locally integrable $T$-periodic function satisfying \eqref{eq:cond-a-diesis}. Let $g\colon\mathopen{[}0,+\infty\mathclose{[}\to\mathopen{[}0,+\infty\mathclose{[}$ be a continuous function, regularly oscillating at zero, satisfying \eqref{eq:cond-g-star} and
\begin{equation}\label{eq-cond-lem-small-sol}
\lim_{s\to 0^{+}} \dfrac{h(t,\pm \|a\|_{L^{1}} g(s))}{s} = 0, \quad \text{uniformly in $t$.}
\end{equation}
Then, \ref{cond-H-r0} holds.
\end{lemma}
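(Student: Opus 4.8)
The plan is to argue by contradiction, producing a sequence of putative small positive solutions and showing that, after rescaling, they must collapse to a positive constant, which is incompatible with the sign condition \eqref{eq:cond-a-diesis}. Suppose \ref{cond-H-r0} fails. Then there exist sequences $r_{n}\to 0^{+}$, $\vartheta_{n}\in\mathopen{]}0,1\mathclose{]}$, and $T$-periodic solutions $(u_{n},v_{n})$ of \eqref{sys-tilde-S-theta} with $u_{n}(t)>0$ for all $t$ and $\|u_{n}\|_{\infty}=r_{n}$. Let $t_{n}^{*}$ be a maximum point of $u_{n}$; since $u_{n}\in\mathcal{C}^{1}$ and $\vartheta_{n}>0$, the first equation forces $h(t_{n}^{*},v_{n}(t_{n}^{*}))=0$, whence $v_{n}(t_{n}^{*})=0$ by \ref{cond-h-0} and \ref{cond-h-2}. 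Only \eqref{eq:cond-a-diesis} among the hypotheses on $a$ will be used, in accordance with the remark preceding the statement.

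First I would estimate $v_{n}$. Integrating the second equation from $t_{n}^{*}$ and using $0<u_{n}\le r_{n}$ together with the definition of $\overline{g}$ gives $\|v_{n}\|_{\infty}\le\vartheta_{n}\|a\|_{L^{1}}\,\overline{g}(r_{n})$. Next, writing $\overline{g}(r_{n})=g(\rho_{n})$ for some $\rho_{n}\in\mathopen{]}0,r_{n}\mathclose{]}$ (the maximum of the continuous $g$ over $[0,r_{n}]$ is attained, and is positive), the monotonicity \ref{cond-h-2} of $h(t,\cdot)$ and $\vartheta_{n}\le 1$ yield
\[
\frac{|u_{n}'(t)|}{r_{n}}=\frac{\vartheta_{n}\,|h(t,v_{n}(t))|}{r_{n}}\le\frac{1}{\rho_{n}}\max\Bigl\{h\bigl(t,\|a\|_{L^{1}}g(\rho_{n})\bigr),\,-h\bigl(t,-\|a\|_{L^{1}}g(\rho_{n})\bigr)\Bigr\},
\]
where I used $\rho_{n}\le r_{n}$. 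Since $\rho_{n}\to 0^{+}$, hypothesis \eqref{eq-cond-lem-small-sol} makes the right-hand side tend to $0$ uniformly in $t$, i.e.\ $\|u_{n}'\|_{\infty}/r_{n}\to 0$. This is precisely where the superlinear-at-zero assumption is spent, and the small trick of replacing $\overline{g}(r_{n})$ by $g(\rho_{n})$ with $\rho_{n}\le r_{n}$ is what reconciles the $\overline{g}$ of the a priori bound with the $g$ appearing in \eqref{eq-cond-lem-small-sol}.

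The heart of the argument is a normalization. Set $w_{n}:=u_{n}/r_{n}$, so that $\|w_{n}\|_{\infty}=1$, $w_{n}>0$, $w_{n}(t_{n}^{*})=1$, and $\|w_{n}'\|_{\infty}\to 0$ by the previous step. Shifting the maximum to a fixed point and integrating $w_{n}'$ gives $\|w_{n}-1\|_{\infty}\le T\|w_{n}'\|_{\infty}\to 0$; hence $u_{n}(t)/r_{n}\to 1$ uniformly in $t$. Finally, integrating the second equation over a period gives $\int_{0}^{T}a(t)g(u_{n}(t))\,\mathrm{d}t=0$, so dividing by $g(r_{n})>0$ leads to $\int_{0}^{T}a(t)\,\frac{g(u_{n}(t))}{g(r_{n})}\,\mathrm{d}t=0$. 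Writing $u_{n}(t)=\bigl(u_{n}(t)/r_{n}\bigr)\,r_{n}$ with $u_{n}(t)/r_{n}\to 1$ uniformly and $r_{n}\to 0^{+}$, the regular oscillation of $g$ at zero yields $g(u_{n}(t))/g(r_{n})\to 1$ uniformly in $t$; passing to the limit (using $a\in L^{1}$) contradicts \eqref{eq:cond-a-diesis}, since the integral is identically $0$ while it converges to $\int_{0}^{T}a(t)\,\mathrm{d}t<0$.

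The main obstacle is obtaining the \emph{uniform} convergence $u_{n}/r_{n}\to 1$, rather than mere boundedness: regular oscillation only controls $g(\omega s)/g(s)$ in the regime $\omega\to 1$, so the derivative estimate $\|u_{n}'\|_{\infty}/r_{n}\to 0$ must be strong enough to flatten the rescaled solutions to the constant $1$. Once this is secured, the a priori bound on $v_{n}$ and the averaging of the second equation are routine.
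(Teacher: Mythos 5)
Your proof is correct and follows essentially the same route as the paper's: contradiction, normalization $w_{n}=u_{n}/r_{n}$, a derivative estimate that applies \eqref{eq-cond-lem-small-sol} at a point of $\mathopen{]}0,r_{n}\mathclose{]}$ where $g$ is maximized (you use the maximizer $\rho_{n}$ of $g$ on $\mathopen{[}0,r_{n}\mathclose{]}$ with $\overline{g}(r_{n})=g(\rho_{n})$, the paper uses the maximum point of $g\circ u_{n}$ — the same denominator-replacement trick), then uniform convergence $w_{n}\to 1$ and the averaged second equation combined with regular oscillation to contradict \eqref{eq:cond-a-diesis}. No gaps; the differences are purely cosmetic.
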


\begin{proof}
The proof follows a similar scheme of the one of \cite[Theorem~3.1]{FeZa-2015}.
By contradiction, we suppose that for all $n\in\mathbb{N}\setminus\{0\}$ there exist $r_{n}\in\mathopen{]}0,\frac{1}{n}\mathclose{[}$, $\vartheta_{n}\in \mathopen{]}0,1\mathclose{]}$, and a $T$-periodic solution $(u_{n},v_{n})$ of \eqref{sys-tilde-S-theta} (with $\vartheta=\vartheta_{n}$) such that $u_{n}(t)>0$ for all $t\in\mathbb{R}$ and $\|u_{n}\|_{\infty}=r_{n}$.
We define
\begin{equation*}
w_{n}(t) := \dfrac{u_{n}(t)}{\|u_{n}\|_{\infty}}, \quad t\in\mathbb{R},
\end{equation*}
and claim that
\begin{equation}\label{claim-w}
\lim_{n\to+\infty} w_{n}(t) = 1,
\quad
\text{uniformly in $t$.}
\end{equation}
By Rolle's theorem, let $\hat{t}_{n}\in\mathbb{R}$ be such that $u_{n}'(\hat{t}_{n})=0$. Therefore, by \ref{cond-h-0}, \ref{cond-h-2}, and the first equation in \eqref{sys-tilde-S-theta}, $v_{n}(\hat{t}_{n})=0$. Then
\begin{equation*}
v_{n}(t) = v_{n}(\hat{t}_{n}) - \int_{\hat{t}_{n}}^{t} \vartheta_{n} a(\xi) g(u_{n}(\xi)) \,\mathrm{d}\xi 
= - \vartheta_{n} \int_{\hat{t}_{n}}^{t} a(\xi) g(u_{n}(\xi)) \,\mathrm{d}\xi, 
\quad \text{for all $t\in\mathbb{R}$,}
\end{equation*}
and so
\begin{equation*}
w_{n}'(t) = \dfrac{u_{n}'(t)}{\|u_{n}\|_{\infty}}
= \vartheta_{n} \dfrac{h(t, v_{n}(t))}{\|u_{n}\|_{\infty}}
= \vartheta_{n} \dfrac{h(t, -\vartheta_{n} \int_{\hat{t}_{n}}^{t} a(\xi) g(u_{n}(\xi)) \,\mathrm{d}\xi)}{\|u_{n}\|_{\infty}},
\quad \text{for all $t\in\mathbb{R}$.}
\end{equation*}
As a consequence, if $\bar{t}_{n}\in \mathbb{R}$ denotes the maximum of $g\circ u_{n}$, we obtain
\begin{align*}
\|w_{n}'\|_{\infty} 
&\leq \max_{t\in\mathbb{R}} \dfrac{|h(t, -\vartheta_{n} \int_{\hat{t}_{n}}^{t} a(\xi) g(u_{n}(\xi)) \,\mathrm{d}\xi)|}{\|u_{n}\|_{\infty}}
\\
&\leq \max_{t\in\mathbb{R}} \biggl{\{} \dfrac{h(t,\|a\|_{L^{1}}g(u_{n}(\bar{t}_{n})))}{r_{n}},-\dfrac{h(t,-\|a\|_{L^{1}}g(u_{n}(\bar{t}_{n})))}{r_{n}} \biggr{\}} \\
&\leq \max_{t\in\mathbb{R}} \biggl{\{} \dfrac{h(t,\|a\|_{L^{1}}g(u_{n}(\bar{t}_{n})))}{u_{n}(\bar{t}_{n})},-\dfrac{h(t,-\|a\|_{L^{1}}g(u_{n}(\bar{t}_{n})))}{u_{n}(\bar{t}_{n})} \biggr{\}},
\end{align*}
since $u_{n}(\bar{t}_{n}) \leq r_{n} = \|u_{n}\|_{\infty}$.
Therefore, by \eqref{eq-cond-lem-small-sol}, we have
\begin{equation*}
\lim_{n\to+\infty} w_{n}'(t) = 0,
\quad
\text{uniformly in $t$.}
\end{equation*}
Next, since $\|w_{n}\|_{\infty}=1$, there exists $\tilde{t}_{n}\in\mathbb{R}$ such that $w_{n}(\tilde{t}_{n})=1$ and thus
\begin{equation*}
w_{n}(t) = w_{n}(\tilde{t}_{n}) + \int_{\tilde{t}_{n}}^{t} w_{n}'(\xi)\,\mathrm{d}\xi 
= 1 + \int_{\tilde{t}_{n}}^{t} w_{n}'(\xi)\,\mathrm{d}\xi , 
\quad \text{for all $t\in\mathbb{R}$,}
\end{equation*}
and, since $w_{n}'(t)\to0$ uniformly as $n\to+\infty$, \eqref{claim-w} is proved.

Next, since $\vartheta_{n}>0$, we observe that
\begin{align*}
0 
&= -\dfrac{1}{\vartheta_{n}} \int_{0}^{T} v_{n}'(t)\,\mathrm{d}t
= \int_{0}^{T} a(t)g(u_{n}(t))\,\mathrm{d}t
\\
&=\int_{0}^{T} \bigl{[}a(t)g(r_{n}) + a(t) \bigl{(}g(r_{n} w_{n}(t)) - g(r_{n})\bigr{)}\bigr{]}\,\mathrm{d}t.
\end{align*}
Since $g(r_{n})>0$, from \eqref{eq:cond-a-diesis} we deduce that
\begin{align*}
0 &< -\int_{0}^{T} a(t) \,\mathrm{d}t
= \int_{0}^{T} a(t) \dfrac{g(r_{n} w_{n}(t)) - g(r_{n})}{g(r_{n})}\,\mathrm{d}t
\\
& \leq \|a\|_{L^{1}} \max_{t\in\mathbb{R}} \biggl{|} \dfrac{g(r_{n} w_{n}(t))}{g(r_{n})}-1\biggr{|} 
= \|a\|_{L^{1}} \biggl{|} \dfrac{g(r_{n} \hat{w}_{n})}{g(r_{n})}-1\biggr{|},
\end{align*}
where $\hat{w}_{n} := w_{n}(t_{n})$ for some $t_{n}\in\mathbb{R}$. A contradiction is obtained for $n$ sufficiently large, since $g$ is regularly oscillating at zero. 
The proof is complete.
\end{proof}

We can give an analogous result without assuming the regular oscillation at zero on $g$. In fact, this condition can be replaced just by asking $g$ to be continuously differentiable in a right neighborhood of zero and such that
\begin{equation}\label{hp-super}
\limsup_{s\to 0^{+}} \dfrac{g'(s)s}{g(s)} \leq C_{g},
\quad \text{for some $C_{g}\geq0$.}
\end{equation}
The precise result is the following.

\begin{lemma}\label{lem-small-sol-2}
Let $h\colon\mathbb{R}\times\mathbb{R}\to\mathbb{R}$ be a continuous function, $T$-periodic in the first variable, and satisfying \ref{cond-h-0} and \ref{cond-h-2}.
Let $a\colon\mathbb{R}\to\mathbb{R}$ be a continuous $T$-periodic function satisfying \eqref{eq:cond-a-diesis}. Let $g\colon\mathopen{[}0,+\infty\mathclose{[}\to\mathopen{[}0,+\infty\mathclose{[}$ be continuously differentiable in a right neighborhood of zero and satisfying \eqref{eq:cond-g-star}, \eqref{hp-super} and
\begin{equation}\label{cond-lem-small-sol-2}
\lim_{s\to 0^{+}} \dfrac{h(t, K g(s))}{s} = 0, \quad \text{uniformly in $t$, for every $K\in\mathbb{R}$.}
\end{equation}
Then, \ref{cond-H-r0} holds.
\end{lemma}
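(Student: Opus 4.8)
The plan is to mirror the proof of Lemma~\ref{lem-small-sol} almost verbatim, replacing only the final contradiction argument, which was the one place where regular oscillation of $g$ was used. So I would argue by contradiction: suppose \ref{cond-H-r0} fails, obtaining sequences $r_{n}\to 0^{+}$, $\vartheta_{n}\in\mathopen{]}0,1\mathclose{]}$, and $T$-periodic solutions $(u_{n},v_{n})$ of \eqref{sys-tilde-S-theta} with $u_{n}>0$ and $\|u_{n}\|_{\infty}=r_{n}$. Setting $w_{n}:=u_{n}/\|u_{n}\|_{\infty}$ as before, the hypothesis \eqref{cond-lem-small-sol-2} (taking $K=\pm\|a\|_{L^{1}}$) gives exactly \eqref{eq-cond-lem-small-sol}, so the first half of the proof carries over unchanged and yields $w_{n}'\to 0$ uniformly, hence $w_{n}(t)\to 1$ uniformly in $t$, i.e.\ claim \eqref{claim-w} holds. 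Likewise, integrating the second equation gives $0<-\int_{0}^{T}a(t)\,\mathrm{d}t=\int_{0}^{T}a(t)\frac{g(r_{n}w_{n}(t))-g(r_{n})}{g(r_{n})}\,\mathrm{d}t$ exactly as before.

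The only step requiring a new argument is the final one: showing that $\frac{g(r_{n}w_{n}(t))-g(r_{n})}{g(r_{n})}\to 0$ uniformly in $t$, which previously used regular oscillation but must now be derived from \eqref{hp-super}. The idea is that \eqref{hp-super} controls the logarithmic derivative of $g$ near zero, so I would estimate the ratio $g(\omega s)/g(s)$ for $\omega$ near $1$ by integrating $g'/g$. Concretely, for $s$ small and $\omega\in\mathopen[\tfrac12,1\mathclose]$ one writes
\begin{equation*}
\log\dfrac{g(s)}{g(\omega s)} = \int_{\omega s}^{s} \dfrac{g'(\sigma)}{g(\sigma)}\,\mathrm{d}\sigma,
\end{equation*}
and \eqref{hp-super} gives $g'(\sigma)/g(\sigma)\leq (C_{g}+1)/\sigma$ for $\sigma$ in a sufficiently small right neighborhood of zero, whence
\begin{equation*}
0 \leq \log\dfrac{g(s)}{g(\omega s)} \leq (C_{g}+1)\int_{\omega s}^{s}\dfrac{\mathrm{d}\sigma}{\sigma} = (C_{g}+1)\log\dfrac{1}{\omega}.
\end{equation*}
Since $w_{n}(t)\in\mathopen[\tfrac12,1\mathclose]$ for $n$ large by \eqref{claim-w}, this bounds $g(r_{n})/g(r_{n}w_{n}(t))$ uniformly in terms of $w_{n}(t)^{-(C_{g}+1)}$, and as $w_{n}(t)\to 1$ uniformly the ratio $g(r_{n}w_{n}(t))/g(r_{n})\to 1$ uniformly in $t$. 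That forces $\max_{t}\bigl|g(r_{n}w_{n}(t))/g(r_{n})-1\bigr|\to 0$, contradicting the strict positive lower bound $0<-\int_{0}^{T}a(t)\,\mathrm{d}t$ obtained above.

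I expect the main obstacle to be purely bookkeeping: verifying that $w_{n}(t)$ lands in a fixed compact subinterval of $\mathopen]0,+\infty\mathclose[$ (say $\mathopen[\tfrac12,1\mathclose]$) for all large $n$, so that $r_{n}w_{n}(t)$ stays in the right neighborhood of zero where $g$ is $\mathcal{C}^{1}$ and \eqref{hp-super} applies, and doing so uniformly in $t$. Once \eqref{claim-w} is in hand this is immediate, so the argument is essentially a one-line replacement of the regular-oscillation step by a Gronwall/logarithmic-derivative estimate flowing from \eqref{hp-super}. Indeed \eqref{hp-super} is precisely the differential condition that \emph{implies} regular oscillation at zero (a Karamata-type equivalence), so the substitution is natural and the rest of the proof is identical to that of Lemma~\ref{lem-small-sol}.
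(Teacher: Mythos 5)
Your reduction to Lemma~\ref{lem-small-sol} is a genuinely different route from the paper's, but as written it has a real gap in the one step that matters. The chain of inequalities
\begin{equation*}
0 \leq \log\dfrac{g(s)}{g(\omega s)} \leq (C_{g}+1)\log\dfrac{1}{\omega}
\end{equation*}
is only half justified: the upper bound does follow from \eqref{hp-super}, but the lower bound $0\leq\log\bigl(g(s)/g(\omega s)\bigr)$ amounts to assuming that $g$ is nondecreasing near $0$, which is not a hypothesis. More fundamentally, \eqref{hp-super} is a \emph{one-sided} condition: it yields $g(\omega s)/g(s)\geq \omega^{C_{g}+1}$ for $\omega\in\mathopen{[}\tfrac12,1\mathclose{]}$ and $s$ small, but it gives no upper bound on this ratio, because $g'(s)s/g(s)$ may be arbitrarily negative. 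Consequently \eqref{hp-super} does \emph{not} imply regular oscillation at zero, so your claimed ``Karamata-type equivalence'' is false. For instance, take $g(s)=s^{2}h(s)$ where $h$ oscillates between $1$ and $2$: as $s$ decreases, $h$ climbs from $1$ to $2$ across short intervals $\mathopen{[}\omega_{n}s_{n},s_{n}\mathclose{]}$ with $\omega_{n}\to1$, and relaxes back like a fixed power of $s$ elsewhere. On the steep stretches $g'(s)s/g(s)$ is very negative (harmless for \eqref{hp-super}), elsewhere it equals $3$, yet $g(\omega_{n}s_{n})/g(s_{n})\to2$. And with only the lower bound $g(r_{n}w_{n}(t))/g(r_{n})\geq w_{n}(t)^{C_{g}+1}$, the identity $\int_{0}^{T}a(t)\,g(r_{n}w_{n}(t))/g(r_{n})\,\mathrm{d}t=0$ produces no contradiction: the ratio can be large precisely where $a>0$.

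For comparison, the paper does not work with $w_{n}=u_{n}/\|u_{n}\|_{\infty}$ at all: it substitutes $z_{n}=v_{n}/(\vartheta_{n}g(u_{n}))$, derives the scalar equation $z_{n}'=-a(t)-\vartheta_{n}\frac{h(t,\vartheta_{n}g(u_{n})z_{n})}{g(u_{n})}\,g'(u_{n})z_{n}$, proves the a priori bound $\|z_{n}\|_{\infty}\leq M$ by a maximal-interval argument, and then integrates this equation over a period to contradict \eqref{eq:cond-a-diesis}. It is worth noting that the paper's estimates bound $\bigl|g'(u_{n})u_{n}/g(u_{n})\bigr|$ by $C_{g}+1$, i.e.\ they effectively read \eqref{hp-super} as a two-sided bound on the logarithmic derivative. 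Under that two-sided reading your argument can be repaired in one line — replace your display by $\bigl|\log\bigl(g(s)/g(\omega s)\bigr)\bigr|\leq(C_{g}+1)\log(1/\omega)$, which does give $g(r_{n}w_{n}(t))/g(r_{n})\to1$ uniformly in $t$ — and your reduction to Lemma~\ref{lem-small-sol} then becomes a valid and arguably cleaner alternative to the paper's proof. But with \eqref{hp-super} taken literally (an upper limsup only), your key step fails, and no argument of this type can close the gap.
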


\begin{proof}
By contradiction, we suppose that for all $n\in\mathbb{N}\setminus\{0\}$ there exist $r_{n}\in\mathopen{]}0,\frac{1}{n}\mathclose{[}$, $\vartheta_{n}\in \mathopen{]}0,1\mathclose{]}$, and a $T$-periodic solution $(u_{n},v_{n})$ of \eqref{sys-tilde-S-theta} (with $\vartheta=\vartheta_{n}$) such that $u_{n}(t)>0$ for all $t\in\mathbb{R}$ and $\|u_{n}\|_{\infty}=r_{n}$.
Let $\bar{n}\in\mathbb{N}\setminus\{0\}$ be such that $g\in\mathcal{C}^{1}(\mathopen{]}0,1/\bar{n}\mathclose{[})$ and consider $n\geq \bar{n}$.
We define
\begin{equation*}
z_{n}(t) := \dfrac{v_{n}(t)}{\vartheta_{n} g(u_{n}(t))}, \quad t\in\mathbb{R},
\end{equation*}
then
\begin{align}
z_{n}'(t) 
&= \dfrac{v_{n}'(t)}{\vartheta_{n} g(u_{n}(t))} - \dfrac{v_{n}(t)}{\vartheta_{n} (g(u_{n}(t)))^{2}} g'(u_{n}(t))u_{n}'(t), 
\\
&= - a(t)-\dfrac{\vartheta_{n} h(t,v_{n}(t))v_{n}(t)}{\vartheta_{n} (g(u_{n}(t)))^{2}} g'(u_{n}(t))
\\
&= - a(t)- \vartheta_{n} \dfrac{h(t,\vartheta_{n} g(u_{n}(t)) z_{n}(t))}{g(u_{n}(t))} g'(u_{n}(t)) z_{n}(t),
\label{eq-zn}
\end{align}
for almost every $t\in\mathbb{R}$.
By Rolle's theorem, let $\hat{t}_{n}\in\mathbb{R}$ be such that $u_{n}'(\hat{t}_{n})=0$. Therefore, by \ref{cond-h-0}, \ref{cond-h-2}, and the first equation in \eqref{sys-tilde-S-theta}, $v_{n}(\hat{t}_{n})=0$. Then $z_{n}(\hat{t}_{n})=0$. 
Let $M$ and $\delta$ be such that
\begin{equation*}
M > \|a\|_{L^{1}}, \qquad 0 < \delta < \dfrac{M-\|a\|_{L^{1}}}{TM (C_{g}+1)}.
\end{equation*}
By \eqref{cond-lem-small-sol-2}, we fix $\varepsilon>0$ such that
\begin{equation*}
\dfrac{|h(t, \pm M g(s))|}{s} < \delta,
\quad \text{for all $t\in\mathbb{R}$ and $s\in\mathopen{]}0,\varepsilon\mathclose{]}$.}
\end{equation*}
and
\begin{equation*}
\dfrac{g'(s)s}{g(s)} \leq C_{g}+1, \quad \text{for all $s\in\mathopen{]}0,\varepsilon\mathclose{]}$.}
\end{equation*}
Let $n>1/\varepsilon$ and thus $u_{n}(t)\in\mathopen{]}0,\varepsilon\mathclose{[}$ for all $t\in\mathbb{R}$. We claim that
\begin{equation}\label{claim-zn}
\|z_{n}\|_{\infty} \leq M.
\end{equation}
By contradiction, if it is not true, let $I_{n}$ be the maximal interval of the form $\mathopen{[}\hat{t}_{n},\tau_{n}\mathclose{]}$ where $|z_{n}(t)| \leq M$. Then, $z_{n}(\hat{t}_{n})=0$ and $|z_{n}(\tau_{n})|=M$.
Integrating \eqref{eq-zn} on $I_{n}$ and passing to the absolute value, we have
\begin{align*}
M &= |z_{n}(\tau_{n})| \leq \|a\|_{L^{1}} + \vartheta_{n} \biggl{|} \int_{I_{n}} \dfrac{h(t,\vartheta_{n} g(u_{n}(t)) z_{n}(t))}{g(u_{n}(t))} g'(u_{n}(t)) z_{n}(t)\,\mathrm{d}t \biggr{|}
\\
&= \|a\|_{L^{1}} + \biggl{|} \int_{I_{n}} \dfrac{h(t,\vartheta_{n} g(u_{n}(t)) z_{n}(t))}{u_{n}(t)}  \dfrac{g'(u_{n}(t)) u_{n}(t)}{g(u_{n}(t))} z_{n}(t)\,\mathrm{d}t \biggr{|}
\\
&\leq \|a\|_{L^{1}} + \delta T M (C_{g}+1)  <M,
\end{align*}
a contradiction. The claim \eqref{claim-zn} is proved.

Next, integrating \eqref{eq-zn} on $\mathopen{[}0,T\mathclose{]}$, we obtain
\begin{align*}
0 &< - \int_{0}^{T} a(t) \, \mathrm{d}t = \vartheta_{n} \int_{0}^{T} \dfrac{h(t,\vartheta_{n} g(u_{n}(t)) z_{n}(t))}{u_{n}(t)}\dfrac{g'(u_{n}(t)) u_{n}(t)}{g(u_{n}(t))} z_{n}(t)\, \mathrm{d}t,
\\
&\leq M (C_{g}+1) \max_{s\in\mathopen{[}0,r_{n} \mathclose{]}} \int_{0}^{T} \dfrac{|h(t,\pm M g(s))|}{s}\,\mathrm{d}t,
\end{align*}
and, using \eqref{cond-lem-small-sol-2}, a contradiction is reached for $n\to+\infty$.
\end{proof}

\subsection{Large solutions}\label{section-4.2}

Preliminarily we show that the maximum of a non-trivial $T$-periodic solution of \eqref{eq:system-S} is reached in a positivity interval $J_{n}$.

\begin{lemma}\label{lem-maximum}
Let $h\colon\mathbb{R}\times\mathbb{R}\to\mathbb{R}$ be a continuous function, $T$-periodic in the first variable, and satisfying \ref{cond-h-0} and \ref{cond-h-2}. Let $a\colon\mathbb{R}\to\mathbb{R}$ be a locally integrable $T$-periodic function satisfying \ref{cond-a-star}. Let $g\colon\mathopen{[}0,+\infty\mathclose{[}\to\mathopen{[}0,+\infty\mathclose{[}$ be a continuous function satisfying \eqref{eq:cond-g-star}. Let $(u,v)$ be a non-trivial $T$-periodic solution of \eqref{eq:system-S} with $u(t)\geq0$ for each $t\in\mathbb{R}$. Then, there exist an index $n\in\{1,\ldots,N\}$ and a point $t^{*}\in J_{n}$ such that $u(t^{*})=\|u\|_{\infty}$. 
\end{lemma}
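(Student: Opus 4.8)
The plan is to pin down the location of the maximum by a first-order sign analysis of the solution, followed by a connectedness (continuation) argument along a negativity gap of the weight.

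First I would record the elementary facts. Since $u'=h(t,\cdot)$ is continuous, $u\in\mathcal{C}^{1}(\mathbb{R})$, and being continuous and $T$-periodic it attains its maximum $M:=\|u\|_{\infty}$ at some point $t^{*}$. Non-triviality forces $M>0$: if $u\equiv0$, then $u'=h(t,v)\equiv0$ would give $v\equiv0$ by \ref{cond-h-0} and \ref{cond-h-2}, contradicting $(u,v)\not\equiv(0,0)$. At the interior maximum $t^{*}$ one has $u'(t^{*})=0$, i.e.\ $h(t^{*},v(t^{*}))=0$, whence $v(t^{*})=0$ again by \ref{cond-h-0} and \ref{cond-h-2}.

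If $t^{*}\in\bigcup_{n}J_{n}$ there is nothing to prove, so I would assume $t^{*}$ lies in the open complement of $\bigcup_{n}J_{n}$, that is, in one of the gaps $I=\mathopen{]}\tau_{n},\sigma_{n+1}\mathclose{[}$ on which $a\le0$ a.e.\ by \ref{cond-a-star}. The heart of the argument is the local claim that $u\equiv M$ near any interior maximum point $s\in I$. At such an $s$ we have $u'(s)=0$, hence $v(s)=0$, and $u(s)=M>0$, so $u>0$ nearby and therefore $g(u)>0$ by \eqref{eq:cond-g-star}; since $a\le0$ there, $v'=-\lambda a(t)g(u)\ge0$, so $v$ is non-decreasing with $v(s)=0$, giving $v\le0$ to the left of $s$ and $v\ge0$ to the right. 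By \eqref{cond-h-1} the same signs pass to $u'=h(t,v)$, so $u$ is non-increasing to the left and non-decreasing to the right of $s$; combined with the fact that $M$ is the \emph{global} maximum, both inequalities force $u\equiv M$ on a two-sided neighborhood of $s$.

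Finally I would promote this local statement to the whole gap by connectedness: the set $S:=\{t\in I:u(t)=M\}$ is closed in $I$ by continuity of $u$, non-empty since $t^{*}\in S$, and open in $I$ by the local claim, so $S=I$ as $I$ is connected. By continuity $u\equiv M$ on $\overline{I}=\mathopen{[}\tau_{n},\sigma_{n+1}\mathclose{]}$; in particular $u(\tau_{n})=M$ with $\tau_{n}\in J_{n}$, which is the desired conclusion. I expect the delicate point to be the local claim, where one must handle the non-strict inequalities carefully — the weight $a$ is allowed to vanish, so $v$ is only monotone and not necessarily strictly so — and crucially invoke that $M$ is a global rather than merely local maximum in order to close the sign analysis; the continuation step is then routine.
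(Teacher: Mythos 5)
Your proof is correct, and it takes a genuinely different route from the paper's. The paper argues by contradiction with a case analysis at a maximum point $t^{*}\notin\bigcup_{n}J_{n}$: if $a\leq 0$ a.e.\ but $a\not\equiv 0$ on $\mathopen{[}t^{*},t^{*}+\delta\mathclose{]}$, it uses the \emph{strict} inequalities ($u>0$ near $t^{*}$, hence $g(u)>0$, hence $v(t^{*}+\delta)>0$ and $u'(t^{*}+\delta)>0$) to push $u$ strictly above $\|u\|_{\infty}$, a contradiction; the degenerate case $a\equiv 0$ near $t^{*}$ is handled separately by a sup-continuation ($t^{**}:=\sup\{t\colon a\equiv 0 \text{ in } \mathopen{[}t^{*},t\mathclose{]}\}$), after which the first case is invoked again. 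Your argument replaces both the case distinction and the contradiction by a single local rigidity claim — at any global maximum point $s$ in a gap, the two-sided monotonicity of $v$ (with $v(s)=0$) and \eqref{cond-h-1} force $u\equiv\|u\|_{\infty}$ on a neighborhood of $s$ — followed by an open-closed (connectedness) argument propagating the identity $u\equiv\|u\|_{\infty}$ to the closed gap, whose endpoint $\tau_{n}$ lies in $J_{n}$. What your route buys: it needs only the non-strict facts $a\leq 0$ and $g\geq 0$ (so strict positivity of $g(u)$, and even non-triviality of the solution, play no essential role), and it treats uniformly the configuration where $a$ vanishes identically near the maximum, which is exactly the case forcing the paper's extra continuation step. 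What the paper's route buys: its Case-1 computation is quantitative (it exhibits the strict increase of $u$ past its maximum), and that same computation is recycled almost verbatim in the proof of Theorem~\ref{large-solution-1}, where the forcing term $\alpha w$ is present; your qualitative connectedness argument would need to be re-inspected there. One small point of care in your write-up: the local claim applies to every point of $S=\{t\in I\colon u(t)=\|u\|_{\infty}\}$ precisely because each such point is an \emph{interior} global maximum (so $u'=0$ and then $v=0$ there by \ref{cond-h-0} and \ref{cond-h-2}); you use this correctly, but it is the hinge on which the openness of $S$ turns.
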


\begin{proof}
Let $t^{*}$ be a maximum point, i.e.~$u(t^{*})=\max_{t\in\mathbb{R}} u(t)$. Since $u(t^{*})>0$, there exists $\delta>0$ such that $u(t)>0$ in $\mathopen{[}t^{*},t^{*}+\delta\mathclose{]}$. Moreover, we observe that $u'(t^{*})=0$ and hence $v(t^{*})=0$. If $t^{*}\in\bigcup_{n=1}^{N} J_{n}$, the thesis is reached. Suppose that $t^{*}\notin \bigcup_{n=1}^{N} J_{n}$.
Let us start by assuming that
\begin{equation*}
a(t)\leq0, \quad \text{for a.e.~$t\in\mathopen{[}t^{*},t^{*}+\delta\mathclose{]}$,}
\qquad 
a \not\equiv 0, \quad \text{in~$\mathopen{[}t^{*},t^{*}+\delta\mathclose{]}$.}
\end{equation*}
Integrating the second equation in \eqref{eq:system-S} we have
\begin{equation}\label{eq-v-positive}
v(t)=v(t^{*})-\lambda \int_{t^{*}}^{t} a(\xi)g(u(\xi))\,\mathrm{d}\xi \geq0,
\quad \text{for every $t\in\mathopen{[}t^{*},t^{*}+\delta\mathclose{]}$,}
\end{equation}
and also
\begin{equation*}
v(t^{*}+\delta)=-\lambda \int_{t^{*}}^{t^{*}+\delta} a(t)g(u(t))\,\mathrm{d}t >0.
\end{equation*}
Therefore, by the first equation in \eqref{eq:system-S}, it follows that $u'(t^{*}+\delta)=h(t^{*}+\delta,v(t^{*}+\delta))>0$.
Finally, recalling \eqref{eq-v-positive}, from $u'(t)=h(t,v(t))\geq 0$ and $u'\not\equiv0$ in $\mathopen{[}t^{*},t^{*}+\delta\mathclose{]}$ (since $u'(t^{*}+\delta)>0$), we have
\begin{equation*}
\|u\|_{\infty} \geq u(t^{*}+\delta) = u(t^{*})+ \int_{t^{*}}^{t^{*}+\delta} u'(t) \,\mathrm{d}t > u(t^{*})=\|u\|_{\infty},
\end{equation*}
a contradiction.
On the other hand, if
\begin{equation*}
a\equiv 0, \quad \text{in $\mathopen{[}t^{*},t^{*}+\delta\mathclose{]}$,}
\end{equation*}
then $v'\equiv0$ in $\mathopen{[}t^{*},t^{*}+\delta\mathclose{]}$ and then $v\equiv0$ in $\mathopen{[}t^{*},t^{*}+\delta\mathclose{]}$, since $v(t^{*})=0$. And so $u'\equiv0$ in $\mathopen{[}t^{*},t^{*}+\delta\mathclose{]}$, then $u\equiv\|u\|_{\infty}$. As a consequence, let 
\begin{equation*}
t^{**}=\sup\{t\in \mathopen{]}t^{*},t^{*}+T\mathclose{]} \colon a\equiv 0 \; \text{in $\mathopen{[}t^{*},t\mathclose{]}$}\}.
\end{equation*}
Since by hypothesis $a\not\equiv0$ on $\mathbb{R}$, it follows that $t^{*}<t^{**}<+\infty$. If $t^{**}\in J_{n}$ for some $n=1,\ldots, N$, we reach the thesis by replacing $t^{*}$ with $t^{**}$. If not, there exists $\delta'>0$ such that
\begin{equation*}
a(t)\leq 0, \quad \text{for a.e.~$t\in\mathopen{[}t^{**},t^{**}+\delta'\mathclose{]}$,}
\qquad 
a \not\equiv 0, \quad \text{in~$\mathopen{[}t^{**},t^{**}+\delta'\mathclose{]}$,}
\end{equation*}
and we can repeat the previous arguments to reach a contradiction. The proof is complete.
\end{proof}

Now we provide an upper bound for the $T$-periodic solutions $(u,v)$ of \eqref{sys-tilde-S-alpha}, namely the validity of \ref{cond-H-lambda} for some $R>0$.

\begin{theorem}\label{large-solution-1}
Let $h\colon\mathbb{R}\times\mathbb{R}\to\mathbb{R}$ be a continuous function, $T$-periodic in the first variable, and satisfying \ref{cond-h-0} and \ref{cond-h-2}.
Let $a\colon\mathbb{R}\to\mathbb{R}$ be a locally integrable $T$-periodic function satisfying \ref{cond-a-star}. Let $g\colon\mathopen{[}0,+\infty\mathclose{[}\to\mathopen{[}0,+\infty\mathclose{[}$ be a continuous function satisfying \eqref{eq:cond-g-star}. 
Then, there exists $R>0$ such that \ref{cond-H-lambda} holds.
\end{theorem}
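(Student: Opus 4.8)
The plan is to argue by contradiction, choosing the radius $R$ small in a way dictated by the behaviour of $h$ at $-\infty$, and then ruling out solutions for large $\lambda$ by showing that any such solution would force its first component to become negative inside a positivity interval of $a$. Set $\ell:=\lim_{s\to-\infty}\underline{h}(s)=\inf_{s<0}\underline{h}(s)$; since $\underline{h}(s)<0$ for $s<0$ and $\underline{h}$ is increasing, $\ell\in[-\infty,0)$. I would fix once and for all a radius $R>0$ with $R<|\ell|\gamma/4$ (any $R>0$ if $\ell=-\infty$) and a level $W<0$ with $|\underline{h}(W)|>4R/\gamma$; this is possible precisely because $\underline{h}(s)\to\ell$ as $s\to-\infty$. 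Note $\underline{g}(R)>0$ by \eqref{eq:cond-g-star}. Suppose \ref{cond-H-lambda} fails for this $R$ and every $\lambda^{*}$: then there exist $\lambda_{n}\to+\infty$, $\alpha_{n}\ge0$, admissible weights $w_{n}$ (nonnegative, vanishing off $\bigcup_{n=1}^{N}J_{n}$), and $T$-periodic solutions $(u_{n},v_{n})$ of \eqref{syst-alpha-2} with $0\le u_{n}\le\|u_{n}\|_{\infty}=R$.

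By Lemma~\ref{lem-maximum} (whose proof is unaffected by the term $-\alpha_{n}w_{n}$, since $w_{n}\equiv0$ off $\bigcup_{n=1}^{N}J_{n}$) the maximum is attained at some $t_{n}^{*}\in J_{m}$, and after passing to a subsequence I may assume $m$ is fixed; since $t_{n}^{*}\in[\sigma_{m},\tau_{m}]$ and $|J_{m}|\ge\gamma$, one of the two half-intervals of length $\gamma/2$ about $t_{n}^{*}$ lies in $J_{m}$, and (up to a further subsequence and reasoning backwards in the symmetric case) I take $I_{n}:=[t_{n}^{*},t_{n}^{*}+\gamma/2]\subseteq J_{m}$. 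At $t_{n}^{*}$ one has $u_{n}'(t_{n}^{*})=0$, hence $v_{n}(t_{n}^{*})=0$ by \ref{cond-h-0}--\ref{cond-h-2}. On $I_{n}$ the weight is nonnegative, so $v_{n}'=-\lambda_{n}a\,g(u_{n})-\alpha_{n}w_{n}\le0$ and $u_{n}'=h(t,v_{n})\le0$; thus $u_{n}$ and $v_{n}$ are nonincreasing on $I_{n}$. While $u_{n}\ge R/2$ we have $g(u_{n})\ge\underline{g}(R)$, and integrating the second equation yields the \emph{key estimate} $v_{n}(t)\le-\lambda_{n}\underline{g}(R)\int_{t_{n}^{*}}^{t}a\le-\lambda_{n}\underline{g}(R)\,A^{*}(t-t_{n}^{*})$, valid as long as $u_{n}\ge R/2$ on $[t_{n}^{*},t]$.

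Next I would introduce the first time $\bar{t}_{n}$ in $I_{n}$ at which either $v_{n}=W$ or $u_{n}=R/2$. If no such time existed, then $u_{n}>R/2$ on all of $I_{n}$, whence $v_{n}(t_{n}^{*}+\gamma/2)\le-\lambda_{n}\underline{g}(R)A^{*}(\gamma/2)\to-\infty<W$, a contradiction for large $n$; so $\bar{t}_{n}$ exists. The alternative $u_{n}(\bar{t}_{n})=R/2$ is impossible for large $n$: on $[t_{n}^{*},\bar{t}_{n}]$ one has $v_{n}\in[W,0]$, so $|u_{n}'|=|h(t,v_{n})|\le|\overline{h}(W)|$, and reaching $R/2$ costs time $\bar{t}_{n}-t_{n}^{*}\ge(R/2)/|\overline{h}(W)|=:\beta_{0}>0$; but then the key estimate gives $W\le v_{n}(\bar{t}_{n})\le-\lambda_{n}\underline{g}(R)A^{*}(\beta_{0})\to-\infty$, absurd. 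Hence for large $n$ we have $v_{n}(\bar{t}_{n})=W$ with $u_{n}\ge R/2$ on $[t_{n}^{*},\bar{t}_{n}]$, and the key estimate yields $A^{*}(\bar{t}_{n}-t_{n}^{*})\le|W|/(\lambda_{n}\underline{g}(R))\to0$; since $A^{*}$ is positive, nondecreasing and $A^{*}(0^{+})=0$, this forces $\bar{t}_{n}-t_{n}^{*}\to0$, in particular $\bar{t}_{n}\le t_{n}^{*}+\gamma/4$ for large $n$.

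Finally, on $[\bar{t}_{n},t_{n}^{*}+\gamma/2]\subseteq J_{m}$ (of length $\ge\gamma/4$) monotonicity of $v_{n}$ gives $v_{n}\le W$, hence $u_{n}'=h(t,v_{n})\le\underline{h}(W)<0$; integrating,
\[
u_{n}(t_{n}^{*}+\tfrac{\gamma}{2})\le u_{n}(\bar{t}_{n})+\underline{h}(W)\tfrac{\gamma}{4}\le R-|\underline{h}(W)|\tfrac{\gamma}{4}<0,
\]
which contradicts $u_{n}\ge0$ and proves \ref{cond-H-lambda} for the chosen $R$. The main obstacle is exactly the generality allowed here: because $h(t,\cdot)$ is merely strictly increasing and both $a$ and the pushing term $\alpha w$ are only $L^{1}$, the phase-plane and time-map estimates of \cite{FeZa-2015} are unavailable, and the gap between $\underline{h}$ and $\overline{h}$ obstructs any naive comparison. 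The device that circumvents this is the constant $A^{*}(\delta)$, which turns $\int a\,g(u_{n})$ into a clean, $\lambda$-independent lower bound as soon as $u_{n}\ge R/2$, together with the first-hitting-time dichotomy, which guarantees that $v_{n}$ plunges below the fixed level $W$ before $u_{n}$ can descend to $R/2$; choosing $R$ small relative to $|\underline{h}(W)|$ then makes the ensuing drop of $u_{n}$ exceed $R$ on a fixed-length subinterval.
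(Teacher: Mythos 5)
Your overall strategy is, in substance, the paper's own proof of Theorem~\ref{large-solution-1}: a contradiction argument built on Lemma~\ref{lem-maximum}, the lower bound $\int_{J} a \geq A^{*}(|J|)$ on subintervals of a positivity interval, an estimate forcing $v$ below a fixed negative level while $u$ stays above $R/2$, and then a linear-in-time drop driving $u$ below zero. You organize it as a sequential argument ($\lambda_{n}\to+\infty$, hitting-time dichotomy) instead of the paper's explicit thresholds $\lambda^{*}_{1,\pm},\lambda^{*}_{2,\pm}$, which is a legitimate repackaging; your use of the monotonicity and positivity of $A^{*}$ is also correct (any interval of length $\delta'>\delta$ inside some $J_{n}$ contains one of length $\delta$, and $a>0$ a.e.\ there). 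However, there is a genuine gap: the case you dismiss as ``symmetric'' is \emph{not} symmetric under your choice of parameters.

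When $t_{n}^{*}$ lies in the right half of $J_{m}$ (e.g.\ $t_{n}^{*}=\tau_{m}$), only the backward interval $[t_{n}^{*}-\gamma/2,\,t_{n}^{*}]$ is available. On it one has $v_{n}\geq 0$ (and growing as $t$ decreases), so your negative level $W$ is never attained; the mirrored argument needs a \emph{positive} level $V$ with $\underline{h}(V)>4R/\gamma$, hence the constraint $R<\frac{\gamma}{4}\,\ell^{+}$, where $\ell^{+}:=\lim_{s\to+\infty}\underline{h}(s)\in\mathopen{]}0,+\infty\mathclose{]}$. Your $R$ is constrained only by $\ell=\lim_{s\to-\infty}\underline{h}(s)$, and you explicitly allow ``any $R>0$ if $\ell=-\infty$''. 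This breaks down concretely: take $h(t,s)=s$ for $s\leq 0$ and $h(t,s)=\arctan s$ for $s>0$, so $\ell=-\infty$, $\ell^{+}=\pi/2$, and choose $R=\gamma$ as your rule permits. In the backward case $|u_{n}'|=h(t,v_{n})\leq \pi/2$, so over an interval of length $\gamma/2$ the total drop of $u_{n}$ is at most $\pi\gamma/4<\gamma=R$, and no contradiction with $u_{n}\geq 0$ can ever be reached there, no matter how large $\lambda_{n}$ is. This is exactly why the paper imposes \emph{both} inequalities in \eqref{conds-R} and \eqref{eta-cond}, using $h^{-}_{\infty}$ and $h^{+}_{\infty}$ (equivalently $\underline{h}(-\eta)$ and $\underline{h}(\eta)$). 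The repair is cheap, since the statement only asks for the existence of some $R$: require $R<\frac{\gamma}{4}\min\{|\ell|,\ell^{+}\}$ and fix two levels $W<0<V$ with $|\underline{h}(W)|>4R/\gamma$ and $\underline{h}(V)>4R/\gamma$; then your forward argument and its genuine mirror image both close the proof.
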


\begin{proof}
By \ref{cond-h-0}, \ref{cond-h-2}, let $h^{\pm}_{\infty}\in\mathbb{R}$ and $s_{*}^{\pm}\in\mathbb{R}$ be such that
\begin{align*}
&h(t,s) \geq h^{+}_{\infty}>0, \quad \text{for all $t\in\mathbb{R}$, for all $s>s_{*}^{+}>0$,}
\\
&h(t,s) \leq h^{-}_{\infty}<0, \quad \text{for all $t\in\mathbb{R}$, for all $s<s_{*}^{-}<0$.}
\end{align*}
First, recalling \eqref{def-gamma}, we fix $R>0$ such that
\begin{equation}\label{conds-R}
-\frac{\gamma}{4} h^{-}_{\infty} > \dfrac{R}{2} 
\quad \text{ and } \quad 
\frac{\gamma}{4} h^{+}_{\infty}>\frac{R}{2}.
\end{equation}
Let $\eta=\eta(R)>0$ be such that
\begin{equation}\label{eta-cond}
-\frac{\gamma}{4} \underline{h}(-\eta) > \dfrac{R}{2} 
\quad \text{ and } \quad 
\frac{\gamma}{4} \underline{h}(\eta) > \dfrac{R}{2}.
\end{equation}
Notice that $\eta$ exists since $\underline{h}$ is a strictly increasing function and
\begin{equation*}
\limsup_{s\to-\infty}\underline{h}(s)\leq h^{-}_{\infty},
\qquad
\liminf_{s\to+\infty}\underline{h}(s)\geq h^{+}_{\infty}.
\end{equation*}
Let
\begin{align*}
&\lambda_{1,-}^{*} = \lambda_{1,-}^{*} (R) := \inf \biggl{\{} \lambda >0 \colon -\frac{\gamma}{8}\overline{h}\biggl{(}-\lambda \underline{g}(R) \, A^{*}\biggl{(}\frac{\gamma}{8}\biggr{)}\biggr{)} > \dfrac{R}{2} \biggr{\}},
\\
&\lambda_{1,+}^{*} = \lambda_{1,+}^{*} (R) := \inf \biggl{\{} \lambda >0 \colon \frac{\gamma}{8}\overline{h}\biggl{(}\lambda \underline{g}(R) \, A^{*}\biggl{(}\frac{\gamma}{8}\biggr{)}\biggr{)} > \dfrac{R}{2} \biggr{\}},
\\
&\lambda_{2,-}^{*} = \lambda_{2,-}^{*} (R) :=\eta \biggl{[} \underline{g}(R)\, A^{*}\biggl{(}-\frac{R}{2 \overline{h}(-\eta)}\biggr{)}\biggr{]}^{-1},
\\
&\lambda_{2,+}^{*} = \lambda_{2,+}^{*} (R) :=\eta \biggl{[} \underline{g}(R)\, A^{*}\biggl{(}\frac{R}{2 \overline{h}(\eta)}\biggr{)}\biggr{]}^{-1},
\end{align*}
and define
\begin{equation*}
\lambda^{*}:=\max\bigl{\{}\lambda_{1,-}^{*},\lambda_{1,+}^{*},\lambda_{2,-}^{*},\lambda_{2,+}^{*}\bigr{\}}.
\end{equation*}
Notice that the sets in the definitions of $\lambda_{1,\pm}^{*}$ are not empty and hence the infima are non-negative real numbers. Indeed, if the set in the definition of $\lambda_{1,-}^{*}$ were empty (for the other we reason analogously), we would have
\begin{equation}\label{pdnei}
-\frac{\gamma}{8}\overline{h}\biggl{(}-\lambda \underline{g}(R)\, A^{*} \biggl{(} \dfrac{\gamma}{8}\biggr{)} \biggr{)}< \dfrac{R}{2},
\quad \text{for all $\lambda>0$.}
\end{equation}
Taking $\lambda\to+\infty$ in \eqref{pdnei} we obtain a contradiction from \eqref{conds-R} and
\begin{equation*}
\limsup_{s\to-\infty}\overline{h}(s)\leq h^{-}_{\infty},
\qquad
\liminf_{s\to+\infty}\overline{h}(s)\geq h^{+}_{\infty}.
\end{equation*}
Moreover, $\lambda_{1,\pm}^{*}\neq 0$ since 
\begin{align*}
(0,\varepsilon)\cap \biggl{\{} \lambda >0 \colon -\frac{\gamma}{8}\overline{h}\biggl{(} -\lambda \underline{g}(R)\, A^{*}\biggl{(} \dfrac{\gamma}{8}\biggr{)}\biggr{)}> \dfrac{R}{2} \biggr{\}}=\emptyset,
\\
(0,\varepsilon)\cap \biggl{\{} \lambda >0 \colon \frac{\gamma}{8}\overline{h}\biggl{(} \lambda \underline{g}(R)\, A^{*}\biggl{(} \dfrac{\gamma}{8}\biggr{)}\biggr{)}> \dfrac{R}{2} \biggr{\}}=\emptyset,
\end{align*}
for sufficiently small $\varepsilon>0$.

Let $\lambda>\lambda^{*}$, $\alpha\geq 0$, and $w$ be as in the statement.
By contradiction, we suppose that there exists a $T$-periodic solution $(u,v)$ of \eqref{syst-alpha-2} with $0\leq u(t) \leq \|u\|_{\infty}=R$ for all $t\in\mathbb{R}$.
Reasoning as in Lemma~\ref{lem-maximum} and recalling that $w\equiv0$ on $(\mathbb{R}/T\mathbb{Z}) \setminus \bigcup_{n=1}^{N}J_{n}$, we know that there exist an index $n\in\{1,\ldots,N\}$ and a point $t^{*}\in J_{n} :=\mathopen{[}\sigma_{n},\tau_{n} \mathclose{]}$ such that $u(t^{*})=\|u\|_{\infty}=R$. 

Two non-exclusive situations can occur
\begin{equation*}
t^{*}\in\Bigl{[}\sigma_{n},\tau_{n}-\frac{\gamma}{2}\Bigr{]},
\quad \text{ or } \quad
t^{*}\in\Bigl{[}\sigma_{n}+\frac{\gamma}{2},\tau_{n}\Bigr{]}.
\end{equation*}
For the first alternative, it will be used the first inequality in \eqref{eta-cond}, while for the second alternative, the second inequality in \eqref{eta-cond}. Let us consider the first alternative (the other is analogous reasoning backwards). In particular, observe that
\begin{equation*}
\int_{t^{*}}^{t^{*}+\frac{\gamma}{2}} a(t)\,\mathrm{d}t > 0.
\end{equation*}
We claim that there exists $\hat{t}\in\mathopen{[}t^{*},t^{*}+\frac{\gamma}{4}\mathclose{]}\subseteq\mathopen{[}\sigma_{n},\tau_{n}\mathclose{]}$ such that $u(\hat{t})=R/2$.
By contradiction, we suppose that
\begin{equation*}
\dfrac{R}{2} < u(t) \leq R, \quad \text{for every $t\in\Bigl{[}t^{*},t^{*}+\frac{\gamma}{4}\Bigr{]}$.}
\end{equation*}
Since $u'(t^{*})=0$ and so $v(t^{*})=0$, integrating the second equation in \eqref{syst-alpha-2} we get
\begin{align*}
v(t) 
&= v(t^{*}) - \lambda \int_{t^{*}}^{t} a(\xi) g(u(\xi)) \,\mathrm{d}\xi - \alpha \int_{t^{*}}^{t} w(\xi) \,\mathrm{d}\xi
\\
&\leq - \lambda \underline{g}(R) \int_{t^{*}}^{t} a(\xi) \mathrm{d}\xi \leq 0,
\quad \text{for all $t\in\Bigl{[}t^{*},t^{*}+\frac{\gamma}{4}\Bigr{]}$,}
\end{align*}
and, recalling \eqref{def-A-star}, we obtain
\begin{equation*}
v(t) 
\leq - \lambda \underline{g}(R) \int_{t^{*}}^{t^{*}+\frac{\gamma}{8}} a(\xi) \mathrm{d}\xi 
\leq - \lambda \underline{g}(R) A^{*}\biggl{(} \dfrac{\gamma}{8}\biggr{)},
\quad \text{for all $t\in\Bigl{[}t^{*}+\frac{\gamma}{8},t^{*}+\frac{\gamma}{4}\Bigl{]}$.}
\end{equation*}
Next, integrating the first equation in \eqref{syst-alpha-2} we have
\begin{align}
\dfrac{R}{2} &\geq \biggl{|} u\biggl{(}t^{*}+\frac{\gamma}{4}\biggr{)}-u\biggl{(}t^{*}+\frac{\gamma}{8}\biggr{)} \biggr{|}
=\biggl{|}  \int_{t^{*}+\frac{\gamma}{8}}^{t^{*}+\frac{\gamma}{4}} h(t,v(t)) \,\mathrm{d}t \biggr{|}
\\
&\geq - \int_{t^{*}+\frac{\gamma}{8}}^{t^{*}+\frac{\gamma}{4}} h \biggl{(} t,- \lambda \underline{g}(R) A^{*}\biggl{(} \dfrac{\gamma}{8}\biggr{)} \biggr{)} \,\mathrm{d}t \\
&\geq  - \dfrac{\gamma}{8} \overline{h} \biggl{(} - \lambda \underline{g}(R) A^{*}\biggl{(} \dfrac{\gamma}{8}\biggr{)} \biggr{)}.
\label{eq-contr-1}
\end{align}
Then, a contradiction is reached since $\lambda>\lambda^{*}\geq \lambda^{*}_{1,-}$.

Therefore, let $\hat{t}\in\mathopen{]}t^{*},t^{*}+\frac{\gamma}{4}\mathclose{[}$ be such that $u(\hat{t})=R/2$. Due to the fact that $u'(t)<0$ in $\mathopen{]}t^{*},\tau_{n}\mathclose{]}$ (since $v(t)<0$ therein), we have that
\begin{align*}
&\dfrac{R}{2} \leq u(t) \leq R, \quad \text{for all $t\in\mathopen{[}t^{*},\hat{t}\mathclose{]}$,}
\\
&0 \leq u(t) \leq \dfrac{R}{2}, \quad \text{for all $t\in\mathopen{]}\hat{t},\tau_{n}\mathclose{]}$.}
\end{align*}

We claim that $v(\hat{t})<-\eta$.
Assume by contradiction that $v(\hat{t})\geq-\eta$. Observing that $v(t)\geq v(\hat{t})\geq-\eta$ for all $t\in\mathopen{[}t^{*},\hat{t}\mathclose{]}$ (since $v'(t)\leq0$ therein), we have
\begin{equation*}
u'(t) = h(t,v(t)) \geq h(t,-\eta) \geq \overline{h}(-\eta), \quad \text{for all $t\in\mathopen{[}t^{*},\hat{t}\mathclose{]}$.}
\end{equation*}
Therefore, we obtain that
\begin{equation*}
-\dfrac{R}{2} = u(\hat{t})- u(t^{*}) = \int_{t^{*}}^{\hat{t}} u'(t) \,\mathrm{d}t \geq (\hat{t}-t^{*}) \overline{h}(-\eta)
\end{equation*}
and so (since $\overline{h}(-\eta)<0$)
\begin{equation*}
\hat{t}\geq t^{*} - \dfrac{R}{2 \overline{h}(-\eta)} = t^{*} + \dfrac{R}{2 |\overline{h}(-\eta)|}.
\end{equation*}
We deduce that
\begin{align}
-\eta 
&\leq v(\hat{t}) 
= v(t^{*}) - \lambda \int_{t^{*}}^{\hat{t}} a(t) g(u(t)) \,\mathrm{d}t - \alpha \int_{t^{*}}^{\hat{t}} w(t) \,\mathrm{d}t 
\\
&\leq - \lambda \underline{g}(R) A^{*}\left(\frac{R}{2 |\overline{h}(-\eta)|}\right) 
< -\eta,
\label{eq-contr-2}
\end{align}
a contradiction in the election of $\lambda$ since $\lambda>\lambda^{*}\geq \lambda^{*}_{2,-}$.
Then, an integration of the first equation in \eqref{syst-alpha-2}, the first inequality in \eqref{eta-cond}, and the fact that $\hat{t}\leq t^{*}+\frac{\gamma}{4}\leq \tau_{n}-\frac{\gamma}{4}$ yield
\begin{equation*}
u(\tau_{n}) = u(\hat{t})+\int_{\hat{t}}^{\tau_{n}} h(t,v(t))\,\mathrm{d}t 
\leq \dfrac{R}{2}+\int_{\tau_{n}-\frac{\gamma}{4}}^{\tau_{n}}h(t,v(t))\,\mathrm{d}t 
< \dfrac{R}{2} + \frac{\gamma}{4} \underline{h}(-\eta) <0,
\end{equation*}
a contradiction with the fact that $u$ is a non-negative function.
\end{proof}

As a final result, we introduce a condition at infinity ensuring the validity of \ref{cond-H-lambda-00}, so the existence of a priori bounds for every $\lambda>0$.

\begin{theorem}\label{large-solution-2}
Let $h\colon\mathbb{R}\times\mathbb{R}\to\mathbb{R}$ be a continuous function, $T$-periodic in the first variable, and satisfying \ref{cond-h-0} and \ref{cond-h-2}. 
Let $a\colon\mathbb{R}\to\mathbb{R}$ be a locally integrable $T$-periodic function satisfying \ref{cond-a-star}. 
Let $\lambda>0$. Let $g\colon\mathopen{[}0,+\infty\mathclose{[}\to\mathopen{[}0,+\infty\mathclose{[}$ be a continuous function satisfying \eqref{eq:cond-g-star} and
\begin{equation}\label{super-infinity-2}
\lim_{s\to+\infty} \mathrm{sign}(K) \frac{h(t,Kg(s))}{s} = +\infty, \quad \text{uniformly in $t$, for every $K\in\mathbb{R}\setminus\{0\}$.}
\end{equation} 
Then, \ref{cond-H-lambda-00} holds.
\end{theorem}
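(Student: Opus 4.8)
The plan is to reproduce the architecture of the proof of Theorem~\ref{large-solution-1}, but to let the superlinear growth \eqref{super-infinity-2} play the role that ``$\lambda$ large'' played there, thereby producing the a~priori amplitude bound $R$ for an arbitrarily fixed $\lambda>0$. So fix $\lambda>0$. A preliminary remark is that \eqref{super-infinity-2} forces $g(s)\to+\infty$ as $s\to+\infty$ (otherwise $h(t,Kg(s))$ would stay bounded and the ratio in \eqref{super-infinity-2} would tend to $0$), whence $\underline{g}(R)=\min_{[R/2,R]}g\to+\infty$. The quantitative heart of the matter is then the following consequence of the \emph{uniformity in $t$} of \eqref{super-infinity-2}: for every fixed $c>0$, writing $\underline{g}(R)=g(\xi_R)$ with $\xi_R\in[R/2,R]$ and using $R\leq 2\xi_R$, one has
\[
\frac{\underline{h}(-c\,\underline{g}(R))}{R}\longrightarrow-\infty
\qquad\text{and}\qquad
\frac{\overline{h}(-c\,\underline{g}(R))}{R}\longrightarrow-\infty
\quad\text{as }R\to+\infty,
\]
that is, at thresholds of the form $-c\,g(\xi)$ \emph{both} the upper and the lower $t$-envelopes of $h$ are simultaneously super-negative. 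Keeping the threshold in this $g$-scaled form is exactly what lets one bypass the $t$-oscillation of $h$.

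I then argue by contradiction, assuming the existence of solutions $(u,v)$ of \eqref{syst-alpha-2} with $\|u\|_{\infty}=R$ for arbitrarily large $R$. By Lemma~\ref{lem-maximum} (recalling $w\equiv0$ off $\bigcup_n J_n$), the maximum is attained at some $t^{*}\in J_n=[\sigma_n,\tau_n]$ with $u(t^{*})=R$ and $v(t^{*})=0$. Since $a>0$ and $w\geq0$ on $J_n$, the second equation gives $v'<0$, hence $v<0$ and $u'=h(t,v)<0$ to the right of $t^{*}$, so that $u$ is strictly decreasing on $(t^{*},\tau_n]$; as in Theorem~\ref{large-solution-1} I treat the alternative $t^{*}\in[\sigma_n,\tau_n-\gamma/2]$, the other being symmetric. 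Note also that $u$ cannot reach $0$ in the interior of $(t^{*},\tau_n]$ without an immediate contradiction, since a zero of the nonnegative $u$ would be an interior minimum at which $u'=h(t,v)<0$; thus the contradiction will ultimately take the form $u(\tau_n)<0$.

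The first quantitative step mirrors the ``claim'' of Theorem~\ref{large-solution-1}: for $R$ large there is $\hat t\in(t^{*},t^{*}+\gamma/4]$ with $u(\hat t)=R/2$. Indeed, if $u>R/2$ on $[t^{*},t^{*}+\gamma/4]$ then $g(u)\geq\underline{g}(R)$ there, so integrating the second equation gives $v(t)\leq-\lambda\underline{g}(R)A^{*}(\gamma/8)$ on $[t^{*}+\gamma/8,t^{*}+\gamma/4]$, and integrating the first one yields $R/2\geq-\tfrac{\gamma}{8}\,\overline{h}(-\lambda\underline{g}(R)A^{*}(\gamma/8))$, contradicting the second displayed limit for $R$ large. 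When in addition $u\geq R/2$ on all of $[t^{*},t^{*}+\gamma/8]$ (the ``slow descent'' case) one gets $v(t^{*}+\gamma/8)\leq-\lambda\underline{g}(R)A^{*}(\gamma/8)=:-c\,\underline{g}(R)$; as $v$ is decreasing and $\tau_n-(t^{*}+\gamma/8)\geq3\gamma/8$, integrating the first equation over $[t^{*}+\gamma/8,\tau_n]$ gives $u(\tau_n)\leq R+\tfrac{3\gamma}{8}\,\underline{h}(-c\,\underline{g}(R))<0$ for $R$ large by the first displayed limit, contradicting $u\geq0$. This establishes \ref{cond-H-lambda-00} whenever the descent is slow.

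The main obstacle is the complementary ``fast descent'' case, in which $u$ falls below $R/2$ already inside $[t^{*},t^{*}+\gamma/8]$. Then the only a~priori information is of the form $\overline{h}(v(\hat t))\leq-R/\bigl(2(\hat t-t^{*})\bigr)$, which controls $v$ solely through the \emph{lower} envelope $\overline{h}$, whereas propagating the crash on $[\hat t,\tau_n]$ through $u'=h(t,v)\leq\underline{h}(v(\hat t))$ needs the \emph{upper} envelope $\underline{h}$; since $\underline{h}\geq\overline{h}$, the gap between them is exactly the $t$-oscillation of $h$, and a crude fast-drop bound cannot close it. The correct reading of this case is that rapid descent is precisely what yields the contradiction: one should estimate the \emph{time} for $u$ to descend from $R$ to $0$ inside $J_n$ and show, by a time-map/energy comparison in the spirit of Proposition~\ref{strong-max-p-2} (using the primitives $\underline{H}$ and $G$ and the integral $\int du/\overline{h}(\underline{H}^{-1}(\cdots G))$), that this time tends to $0$ as $R\to+\infty$ by virtue of \eqref{super-infinity-2}, hence is smaller than $|J_n|\geq\gamma$ for $R$ large; this forces $u$ to vanish inside $J_n$ while $v<0$, which is impossible. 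I expect the delicate point to be carrying out this non-autonomous time estimate uniformly in $t$ — i.e. extracting from \eqref{super-infinity-2} a lower bound on the descent speed that is phrased through the upper envelope $\underline{h}$ at the $g$-scaled levels where it is controlled — so as to merge the fast and slow regimes into a single bound $u(\tau_n)<0$.
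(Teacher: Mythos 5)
Your proposal is incomplete, and the gap is the one you flag yourself: the ``fast descent'' case is never proved. Up to that point the argument is sound and matches the paper: your reduction of \eqref{super-infinity-2} to its $\underline{g}$-scaled form (the paper's \eqref{super-infinity-3}) via $\underline{g}(R)=g(\xi_R)$, $\xi_R\in\mathopen{[}R/2,R\mathclose{]}$, is exactly the paper's reduction; ruling out the plateau $u>R/2$ on $\mathopen{[}t^{*},t^{*}+\gamma/4\mathclose{]}$ is the paper's contradiction \eqref{eq-contr-1} read with $R\to+\infty$ in place of $\lambda\to+\infty$; and your slow-descent case is a correct, complete argument. But a proof that ends with ``one should estimate the descent time\dots\ I expect the delicate point to be\dots'' has not established \ref{cond-H-lambda-00}: the scenario in which $u$ falls from $R$ to $R/2$ in time $<\gamma/8$ is precisely the scenario in which your $g$-scaled thresholds give no lower bound on $|v(\hat{t})|$, and you leave it open.

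The paper's proof never isolates a ``fast'' case, because it dichotomizes on the \emph{value} of $v(\hat{t})$ rather than on the elapsed time. Keeping the scheme of Theorem~\ref{large-solution-1}, it fixes $\eta=\eta(R)$ through $\underline{h}$ by $-\tfrac{\gamma}{8}\underline{h}(-\eta)=\tfrac{R}{2}=\tfrac{\gamma}{8}\underline{h}(\eta)$, so that \eqref{eta-cond} holds. If $v(\hat{t})<-\eta$, then $v\leq-\eta$ on $\mathopen{[}\hat{t},\tau_n\mathclose{]}$ and the endgame $u(\tau_n)\leq\tfrac{R}{2}+\tfrac{\gamma}{4}\underline{h}(-\eta)<0$ closes the proof. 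If instead $v(\hat{t})\geq-\eta$, then on $\mathopen{[}t^{*},\hat{t}\mathclose{]}$ one has $|u'|=|h(t,v)|\leq|\overline{h}(-\eta)|$, so the descent necessarily took time $\hat{t}-t^{*}\geq R/(2|\overline{h}(-\eta)|)$; integrating the second equation of \eqref{syst-alpha-2} over that long interval gives $v(\hat{t})\leq-\lambda\underline{g}(R)\,A^{*}\bigl(R/(2|\overline{h}(-\eta)|)\bigr)$, and this is the contradiction \eqref{eq-contr-2}, reached for $R$ large by \eqref{super-infinity-3}. In this bookkeeping a fast fall automatically lands in the first alternative, so it needs no separate estimate; this is the device your write-up is missing.

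That said, your diagnosis that the $t$-oscillation of $h$ (the gap between $\underline{h}$ and $\overline{h}$) is the danger point is not misplaced, and it does not fully disappear in the paper's scheme either: with $\lambda$ fixed, the contradiction in \eqref{eq-contr-2} requires $\lambda\underline{g}(R)\,A^{*}\bigl(R/(2|\overline{h}(-\eta(R))|)\bigr)>\eta(R)$, where $\eta(R)$ is calibrated through $\underline{h}$ while the interval length is controlled through $\overline{h}$; this follows from \eqref{super-infinity-3} when $|\overline{h}(-\eta(R))|$ remains of order $R$ (e.g.\ $h$ independent of $t$, or envelopes with bounded ratio), but when the ratio $\overline{h}/\underline{h}$ is unbounded the argument of $A^{*}$ shrinks to $0$ and the inequality is no longer automatic --- a detail the paper subsumes under ``minor modifications.'' So your slow-case device (thresholds $-c\,\underline{g}(R)$, where uniformity in $t$ controls both envelopes simultaneously) is actually the more robust half of the argument; but until the complementary case is carried out --- either by the paper's value-dichotomy, or by genuinely executing your descent-time program --- the proposal does not prove the theorem.
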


\begin{proof}
Let $\lambda>0$. For every $R>0$ we define $\eta=\eta(R)>0$ such that 
\begin{equation*}
-\frac{\gamma}{8} \underline{h}(-\eta) = \dfrac{R}{2} 
\quad \text{ and } \quad 
\frac{\gamma}{8} \underline{h}(\eta) = \dfrac{R}{2},
\end{equation*}
so that \eqref{eta-cond} holds. The proof follows exactly the same scheme of the one of Theorem~\ref{large-solution-1}. The only modifications are in the way we reach the contradictions in \eqref{eq-contr-1} and in \eqref{eq-contr-2}. In the present situation $\lambda$ is fixed and the contradictions are reached taking $R$ sufficiently large, exploiting 
\begin{equation}\label{super-infinity-3}
\lim_{s\to+\infty} \mathrm{sign}(K) \frac{h(t,K\underline{g}(s))}{s} = +\infty, \quad \text{uniformly in $t$, for every $K\in\mathbb{R}\setminus\{0\}$.}
\end{equation} 
To conclude, we just observe that condition \eqref{super-infinity-3} follows from \eqref{super-infinity-2}. Indeed, let $C_{s}\in \mathopen{[}s/2,s\mathclose{]}$ be such that $g(C_{s})=\underline{g}(s)$, then $h(t,K\underline{g}(s))/s = h(t,Kg(C_{s}))/C_{s} \cdot C_{s}/s$ and so
$\mathrm{sign}(K) h(t,K\underline{g}(s))/s \geq 1/2 \cdot \mathrm{sign}(K) h(t,Kg(C_{s}))/C_{s}$, 
and from this clearly \eqref{super-infinity-3} follows from \eqref{super-infinity-2}. 
We omit the other details since they require only minor modifications of the proof of Theorem~\ref{large-solution-1}.
\end{proof}

\section{Examples}\label{section-5}

In this final section, we present some applications of our existence results to second-order differential equations.

\subsection{$\phi$-Laplacian operator}\label{section-5.1}

We consider the $\phi$-Laplacian differential equation
\begin{equation}\label{eq-phi-Lapl}
(\phi(u'))'+\lambda a(t)g(u)=0,
\end{equation}
where $\phi\colon\mathbb{R}\to\mathbb{R}$ is an increasing homeomorphism with $\phi(0)=0$ (for the readers convenience, we stress that $\phi(\mathbb{R})=\mathbb{R}$).
We notice that equation \eqref{eq-phi-Lapl} can be written as a planar system of the form
\begin{equation*}
\begin{cases}
\, u'=\phi^{-1}(v), \\
\, v'=-\lambda a(t) g(u),
\end{cases}
\end{equation*}
which corresponds to system \eqref{eq:system-S} with $h(t,s)=\phi^{-1}(s)$. Thus, hypotheses \ref{cond-h-0} and \ref{cond-h-2} are trivially satisfied. 

We can now present the following existence result for $T$-periodic solutions of the $\phi$-Laplacian equation \eqref{eq-phi-Lapl}, as a direct application of Theorem~\ref{th-main}.

\begin{theorem}\label{th-5.1}
Let $\phi\colon\mathbb{R}\to\mathbb{R}$ be an increasing homeomorphism with $\phi(0)=0$ satisfying the upper $\sigma$-condition at zero, that is
\begin{equation}\label{cond-upper-sigma}
\limsup_{s\to 0^{+}} \dfrac{|\phi(\sigma s)|}{\phi(s)} < +\infty, 
\quad \text{for every $|\sigma|>1$.}
\end{equation}
Let $a\colon\mathbb{R}\to\mathbb{R}$ be a locally integrable $T$-periodic function satisfying \eqref{eq:cond-a-diesis} and \ref{cond-a-star}. Let $g\colon\mathopen{[}0,+\infty\mathclose{[}\to\mathopen{[}0,+\infty\mathclose{[}$ be a continuous function, regularly oscillating at zero, satisfying \eqref{eq:cond-g-star} and
\begin{equation}\label{cond-gphi}
\lim_{s\to 0^{+}} \dfrac{g(s)}{\phi(s)} = 0.
\end{equation}
Then, there exists $\lambda^{*}>0$ such that for every $\lambda>\lambda^{*}$, \eqref{eq-phi-Lapl} has at least one positive $T$-periodic solution.
\end{theorem}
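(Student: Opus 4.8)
The plan is to recast \eqref{eq-phi-Lapl} as the planar system \eqref{eq:system-S} with $h(t,s)=\phi^{-1}(s)$ and then invoke Theorem~\ref{th-main}. Since $\phi$ is an increasing homeomorphism of $\mathbb{R}$ with $\phi(0)=0$, its inverse $\phi^{-1}$ is a continuous, strictly increasing function with $\phi^{-1}(0)=0$; regarded as a function of $(t,s)$ it is trivially $T$-periodic in $t$ and thus satisfies \ref{cond-h-0} and \ref{cond-h-2}. The hypotheses on $a$ (namely \eqref{eq:cond-a-diesis} and \ref{cond-a-star}) and on $g$ (continuity, regular oscillation at zero, and \eqref{eq:cond-g-star}) are assumed outright. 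Hence the whole proof reduces to checking that the pair $(h,g)$ satisfies the superlinearity-at-zero condition \eqref{main-cond-zero}; once this is done, Theorem~\ref{th-main} produces, for $\lambda$ large, a $T$-periodic solution $(u,v)$ of the system with $u>0$, and its first component $u$ is precisely the sought positive $T$-periodic solution of \eqref{eq-phi-Lapl}.

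Because $h$ does not depend on $t$, verifying \eqref{main-cond-zero} amounts to showing that
\[
\lim_{s\to0^{+}}\frac{\phi^{-1}(Kg(s))}{s}=0, \qquad \text{for every }K\in\mathbb{R}.
\]
The case $K=0$ is immediate since $\phi^{-1}(0)=0$. For $K>0$ I would argue by contradiction: if the ratio did not tend to $0$, there would be $\varepsilon>0$ and a sequence $s_{n}\to0^{+}$ with $\phi^{-1}(Kg(s_{n}))\geq\varepsilon s_{n}>0$, whence, applying the increasing map $\phi$, $Kg(s_{n})\geq\phi(\varepsilon s_{n})$. If $\varepsilon\geq1$ monotonicity gives $\phi(\varepsilon s_{n})\geq\phi(s_{n})$, while if $0<\varepsilon<1$ the upper $\sigma$-condition \eqref{cond-upper-sigma} with $\sigma=1/\varepsilon>1$ yields $\phi(s_{n})\leq M\,\phi(\varepsilon s_{n})$ for large $n$ and some constant $M>0$; in either case $Kg(s_{n})/\phi(s_{n})$ stays bounded away from $0$, contradicting \eqref{cond-gphi}.

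The case $K<0$ is handled by the mirror argument on the negative half-line: here $\phi^{-1}(Kg(s))<0$, and assuming $\phi^{-1}(Kg(s_{n}))\leq-\varepsilon s_{n}$ leads, after applying $\phi$ and passing to absolute values, to $|K|g(s_{n})\geq|\phi(-\varepsilon s_{n})|$, which one again compares to $\phi(s_{n})$ through \eqref{cond-upper-sigma} in order to contradict \eqref{cond-gphi}. I expect this last comparison to be the only genuinely delicate point, since it is where the behaviour of $\phi$ at negative arguments enters and where the interplay between the $\sigma$-condition and \eqref{cond-gphi} must be used quantitatively; the positive case is completely routine. Once \eqref{main-cond-zero} is established, the conclusion follows verbatim from Theorem~\ref{th-main}.
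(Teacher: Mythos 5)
Your proposal is correct and follows essentially the same route as the paper's own proof: rewrite \eqref{eq-phi-Lapl} as the planar system with $h(t,s)=\phi^{-1}(s)$, verify \eqref{main-cond-zero} by combining \eqref{cond-gphi} with the upper $\sigma$-condition \eqref{cond-upper-sigma} (your contradiction argument for $K>0$ is just the contrapositive of the paper's direct estimate $0<Kg(s)<\phi(\varepsilon s)$ for $s$ small), and then invoke Theorem~\ref{th-main}. The step you single out as delicate, namely bounding $\phi(s_{n})$ by a constant multiple of $|\phi(-\varepsilon s_{n})|$ when $K<0$, is treated in the paper by exactly the same device you propose, i.e.\ by reading \eqref{cond-upper-sigma} as giving $\limsup_{s\to 0^{+}}\phi(s)/|\phi(\sigma s)|<+\infty$ also for $-1<\sigma<0$ and then repeating the $K>0$ argument with $-\varepsilon$ in place of $\varepsilon$.
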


\begin{proof}
We are going to show that the hypotheses of Theorem~\ref{th-main} are satisfied.
We prove that hypotheses \eqref{cond-upper-sigma} and \eqref{cond-gphi} implies that \eqref{main-cond-zero} holds, that is
\begin{equation}\label{eq-th-5.1-HP}
\lim_{s\to 0^{+}} \dfrac{\phi^{-1}(K g(s))}{s} = 0,
\quad \text{for every $K\in\mathbb{R}$.}
\end{equation}
The case $K=0$ is trivial since $\phi^{-1}(0)=0$. Let $K>0$ and $\varepsilon\in\mathopen{]}0,1\mathclose{[}$, then there exists some $\delta>0$ such that
\begin{equation}\label{eq-th-5.1}
0 < \dfrac{\phi(s)}{\phi(\varepsilon s)} \dfrac{g(s)}{\phi(s)} < \dfrac{1}{K}, 
\quad \text{for every $0<s<\delta$,}
\end{equation}
due to \eqref{cond-gphi} and the fact that \eqref{cond-upper-sigma} implies that
\begin{equation*}
\limsup_{s\to 0^{+}} \dfrac{\phi(s)}{|\phi(\sigma s)|} < +\infty, 
\quad \text{for every $0<|\sigma|<1$.}
\end{equation*}
Therefore, from \eqref{eq-th-5.1} we deduce that
\begin{equation*}
0 < K g(s) < \phi(\varepsilon s),
\quad \text{for every $0<s<\delta$,}
\end{equation*}
and thus, by applying $\phi^{-1}$, \eqref{eq-th-5.1-HP} and so \eqref{main-cond-zero} hold. If $K<0$, we proceed similarly by considering
\begin{equation*}
\dfrac{1}{K} < \dfrac{\phi(s)}{\phi(-\varepsilon s)} \dfrac{g(s)}{\phi(s)} < 0, 
\quad \text{for every $0<s<\delta$,}
\end{equation*}
instead of \eqref{eq-th-5.1} and repeating the same argument.
Then, the result follows by a direct application of Theorem~\ref{th-main}.
\end{proof}

The following existence result is a consequence of Theorem~\ref{th-main-2}. We omit the proof since it is analogous to the one of Theorem~\ref{th-5.1}.

\begin{theorem}\label{th-5.2}
Let $\phi\colon\mathbb{R}\to\mathbb{R}$ be an increasing homeomorphism with $\phi(0)=0$ satisfying the upper $\sigma$-condition at zero \eqref{cond-upper-sigma} and the lower $\sigma$-condition at infinity
\begin{equation}\label{cond-lower-sigma}
\limsup_{s\to \pm\infty} \dfrac{\phi(\sigma s)}{\phi(s)} <+\infty, 
\quad \text{for every $\sigma>1$.}
\end{equation}
Let $a\colon\mathbb{R}\to\mathbb{R}$ be a locally integrable $T$-periodic function satisfying \eqref{eq:cond-a-diesis} and \ref{cond-a-star}. Let $g\colon\mathopen{[}0,+\infty\mathclose{[}\to\mathopen{[}0,+\infty\mathclose{[}$ be a continuous function, regularly oscillating at zero, satisfying \eqref{eq:cond-g-star}, \eqref{cond-gphi} and
\begin{equation*}
\lim_{s\to \pm \infty} \dfrac{g(|s|)}{|\phi(s)|} = +\infty.
\end{equation*}
Then, for every $\lambda>0$, \eqref{eq-phi-Lapl} has at least one positive $T$-periodic solution.
\end{theorem}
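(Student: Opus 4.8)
The plan is to apply Theorem~\ref{th-main-2} to the planar system with $h(t,s)=\phi^{-1}(s)$, exactly as in the proof of Theorem~\ref{th-5.1}. Since $\phi$ is an increasing homeomorphism with $\phi(0)=0$, hypotheses \ref{cond-h-0} and \ref{cond-h-2} hold automatically, and the assumptions on $a$ and $g$ are transcribed verbatim. The verification of the superlinear condition at zero \eqref{main-cond-zero}, namely $\phi^{-1}(Kg(s))/s\to0$ as $s\to0^{+}$ for every $K\in\mathbb{R}$, is identical to the argument already carried out in Theorem~\ref{th-5.1}, using the upper $\sigma$-condition \eqref{cond-upper-sigma} together with \eqref{cond-gphi}; I would simply recall it. The genuinely new task is to check the superlinear condition at infinity \eqref{main-cond-infinity}, that is
\begin{equation*}
\lim_{s\to+\infty}\mathrm{sign}(K)\,\frac{\phi^{-1}(Kg(s))}{s}=+\infty,\quad\text{for every }K\in\mathbb{R}\setminus\{0\}.
\end{equation*}

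The first preparatory step is to rewrite \eqref{cond-lower-sigma} in a usable multiplicative form: for every fixed $M>1$ there are a constant $C_{M}>0$ and a threshold beyond which $|\phi(Ms)|\leq C_{M}|\phi(s)|$, both as $s\to+\infty$ and as $s\to-\infty$. Next I treat $K>0$. Fixing an arbitrary $M>1$, the goal is to show that $Kg(s)>\phi(Ms)$ for all $s$ large, since applying the increasing map $\phi^{-1}$ then yields $\phi^{-1}(Kg(s))>Ms$, hence $\phi^{-1}(Kg(s))/s>M$, and the arbitrariness of $M$ gives the limit $+\infty$. To obtain the inequality I write
\begin{equation*}
\frac{Kg(s)}{\phi(Ms)}=K\,\frac{g(s)}{\phi(s)}\,\frac{\phi(s)}{\phi(Ms)},
\end{equation*}
where $g(s)/\phi(s)\to+\infty$ by the $s\to+\infty$ part of the hypothesis $\lim_{s\to\pm\infty}g(|s|)/|\phi(s)|=+\infty$, while $\phi(s)/\phi(Ms)\geq1/C_{M}>0$ by the first step; the product therefore exceeds $1$ for $s$ large.

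For $K<0$ I repeat the scheme on the negative axis: fixing $M>1$, I aim at $\phi^{-1}(Kg(s))<-Ms$ for $s$ large, equivalently $|K|g(s)>|\phi(-Ms)|$, which after applying $\phi^{-1}$ gives $\phi^{-1}(Kg(s))/s<-M$ and hence $\mathrm{sign}(K)\phi^{-1}(Kg(s))/s>M$. Here I factor
\begin{equation*}
\frac{|K|g(s)}{|\phi(-Ms)|}=|K|\,\frac{g(s)}{|\phi(-s)|}\,\frac{|\phi(-s)|}{|\phi(-Ms)|},
\end{equation*}
and this is exactly where both halves of the hypothesis are needed: the factor $g(s)/|\phi(-s)|\to+\infty$ comes from the $s\to-\infty$ part of $\lim_{s\to\pm\infty}g(|s|)/|\phi(s)|=+\infty$ (which does \emph{not} follow from the $s\to+\infty$ part, since $\phi$ is not assumed odd), while $|\phi(-s)|/|\phi(-Ms)|\geq1/C_{M}$ again uses \eqref{cond-lower-sigma}, now as $s\to-\infty$. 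Once \eqref{main-cond-infinity} is established, Theorem~\ref{th-main-2} applies and produces, for every $\lambda>0$, a $T$-periodic solution $(u,v)$ with $u>0$; its first component is then a positive $T$-periodic solution of \eqref{eq-phi-Lapl}.

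The step I expect to be most delicate is the sign bookkeeping for $K<0$ combined with the fact that $\phi$ need not be odd: one must resist deducing the behaviour at $-\infty$ from that at $+\infty$ and instead exploit the full two-sided strength of both \eqref{cond-lower-sigma} and the superlinear assumption on $g/\phi$. Everything else is routine monotonicity of $\phi^{-1}$ and the conversion of the $\sigma$-condition into the multiplicative bound on $\phi$.
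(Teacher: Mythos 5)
Your proposal is correct and follows exactly the route the paper intends: the paper omits the proof of Theorem~\ref{th-5.2}, declaring it ``analogous to the one of Theorem~\ref{th-5.1}'', and your argument is precisely that analogue --- reuse the verification of \eqref{main-cond-zero} from Theorem~\ref{th-5.1}, derive \eqref{main-cond-infinity} from \eqref{cond-lower-sigma} together with $\lim_{s\to\pm\infty}g(|s|)/|\phi(s)|=+\infty$, and apply Theorem~\ref{th-main-2}. Your sign bookkeeping for $K<0$ (using the $s\to-\infty$ halves of both hypotheses rather than oddness of $\phi$, which is not assumed) is exactly the care the omitted proof requires, so the details you supply are sound.
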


\begin{remark}\label{rem-5.1}
The terminology for the $\sigma$-conditions at zero and at infinity are taken from \cite{GHMaZa-2011}.
Actually, the upper and lower $\sigma$-conditions at infinity were previously introduced and applied in \cite{GHMaZa-1993,GHMaZa-1996} for an odd homeomorphism $\phi$. As observed in \cite{GHMaZa-1993}, the upper $\sigma$-condition at infinity is related to the classical $\Delta_{2}$-condition considered in theory of Orlicz spaces (see \cite[Chapter~8]{AdFo-2003}); more precisely the $\Delta_{2}$-condition near infinity is expressed by the fact that $\phi(\sigma s) / \phi(s)$ is bounded from above in a neighborhood of infinity. Here we use the same kind of upper bound in a neighborhood of zero.
\hfill$\lhd$
\end{remark}

A corollary of Theorem~\ref{th-5.2} can be given for the $(p,q)$-Laplacian operator, namely
\begin{equation*}
\phi(s) = |s|^{p-2}s+|s|^{q-2}s,
\quad \text{with $1< q < p < +\infty$.}
\end{equation*}
We omit the straightforward proof.

\begin{theorem}\label{th-5.3}
Let $1\leq q < p < +\infty$. Let $a\colon\mathbb{R}\to\mathbb{R}$ be a locally integrable $T$-periodic function satisfying \eqref{eq:cond-a-diesis} and \ref{cond-a-star}. Let $g\colon\mathopen{[}0,+\infty\mathclose{[}\to\mathopen{[}0,+\infty\mathclose{[}$ be a continuous function, regularly oscillating at zero, satisfying 
\begin{equation*}
\lim_{s\to 0^{+}} \dfrac{g(s)s}{s^{q}} = 0
\quad \text{ and } \quad
\lim_{s\to +\infty} \dfrac{g(s)s}{s^{p}} = +\infty.
\end{equation*}
Then, for every $\lambda>0$, the differential equation
\begin{equation*}
(|u'|^{p-2}u'+|u'|^{q-2}u')'+\lambda a(t)g(u)=0
\end{equation*}
has at least one positive $T$-periodic solution.
\end{theorem}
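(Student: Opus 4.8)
The plan is to obtain Theorem~\ref{th-5.3} as a direct corollary of Theorem~\ref{th-5.2} applied to the $(p,q)$-Laplacian homeomorphism
\[
\phi(s) = |s|^{p-2}s + |s|^{q-2}s, \qquad \phi(0)=0,
\]
taking $1<q<p<+\infty$ as the principal case (the endpoint $q=1$ is the delicate one; see below). First I would record the elementary facts that $\phi$ is odd, strictly increasing, and maps $\mathbb{R}$ onto $\mathbb{R}$, so that hypotheses \ref{cond-h-0} and \ref{cond-h-2} hold for $h(t,s)=\phi^{-1}(s)$ exactly as in the reduction preceding Theorem~\ref{th-5.1}. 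Writing $\phi(s)=s^{p-1}+s^{q-1}$ for $s>0$, everything then reduces to verifying the two $\sigma$-conditions on $\phi$ and translating the growth hypotheses on $g$.

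For the $\sigma$-conditions, the same elementary bound does both jobs. For $s>0$ and $|\sigma|>1$,
\[
|\phi(\sigma s)| = |\sigma|^{p-1}s^{p-1}+|\sigma|^{q-1}s^{q-1} \leq |\sigma|^{p-1}\bigl(s^{p-1}+s^{q-1}\bigr)=|\sigma|^{p-1}\phi(s),
\]
since $|\sigma|^{q-1}\leq|\sigma|^{p-1}$; using the oddness of $\phi$ the same bound holds for $\sigma<-1$. Thus $|\phi(\sigma s)|/\phi(s)\leq|\sigma|^{p-1}$ uniformly in $s>0$, which yields at once the upper $\sigma$-condition at zero \eqref{cond-upper-sigma} and the lower $\sigma$-condition at infinity \eqref{cond-lower-sigma} (for $\sigma>1$), in fact as a global bound rather than a mere $\limsup$.

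Next I would pin down the asymptotics of $\phi$ to convert the hypotheses on $g$ into those of Theorem~\ref{th-5.2}. For $0<s<1$ one has $s^{q-1}\leq\phi(s)\leq 2s^{q-1}$, while for $s>1$ one has $s^{p-1}\leq\phi(s)\leq 2s^{p-1}$; hence $\phi(s)/s^{q-1}\to1$ as $s\to0^{+}$ and $\phi(s)/s^{p-1}\to1$ as $s\to+\infty$. Writing
\[
\frac{g(s)}{\phi(s)} = \frac{g(s)s}{s^{q}}\cdot\frac{s^{q-1}}{\phi(s)}\quad(s\to0^{+}), \qquad \frac{g(s)}{\phi(s)} = \frac{g(s)s}{s^{p}}\cdot\frac{s^{p-1}}{\phi(s)}\quad(s\to+\infty),
\]
the assumptions $g(s)s/s^{q}\to0$ and $g(s)s/s^{p}\to+\infty$ give $g(s)/\phi(s)\to0$, i.e.\ \eqref{cond-gphi}, and, since $|\phi(s)|=\phi(|s|)$, $g(|s|)/|\phi(s)|\to+\infty$ as $s\to\pm\infty$. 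With the regular oscillation of $g$ at zero and \eqref{eq:cond-a-diesis}, \ref{cond-a-star} inherited verbatim, all hypotheses of Theorem~\ref{th-5.2} are met and the conclusion follows for every $\lambda>0$.

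The only genuinely delicate point, which I expect to be the main obstacle, is the endpoint $q=1$: there $|s|^{q-2}s=\mathrm{sign}(s)$, so $\phi(s)=|s|^{p-2}s+\mathrm{sign}(s)$ acquires a jump at the origin, fails to be a homeomorphism of $\mathbb{R}$ onto $\mathbb{R}$, and Theorem~\ref{th-5.2} no longer applies as stated. I would handle this either by restricting $h=\phi^{-1}$ to $\phi(\mathbb{R})$ and re-running the abstract scheme of Section~\ref{section-3} directly, or by noting that near zero $\phi(s)\to1$, so that the superlinear-at-zero requirement degenerates into the automatic condition $g(s)\to0$; in any event I would treat $1<q<p$ as the principal case and flag the strictly borderline nature of $q=1$.
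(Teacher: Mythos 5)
Your proposal is correct and follows exactly the route the paper intends: the paper itself states Theorem~\ref{th-5.3} as a corollary of Theorem~\ref{th-5.2} for $\phi(s)=|s|^{p-2}s+|s|^{q-2}s$ and omits the ``straightforward proof,'' which is precisely the verification of \eqref{cond-upper-sigma}, \eqref{cond-lower-sigma}, \eqref{cond-gphi} and the condition at infinity that you carry out. Your flag on the endpoint $q=1$ is also well taken: there $\phi$ fails to be a homeomorphism of $\mathbb{R}$ onto $\mathbb{R}$, and the ``$1\leq q$'' in the statement is inconsistent with the paper's own introduction of the $(p,q)$-Laplacian with $1<q<p$, so your treatment of $1<q<p$ as the principal case matches what the paper actually proves.
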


\subsection{$p(t)$-Laplacian operator}\label{section-5.2}

We study the $p(t)$-Laplacian differential equation
\begin{equation}\label{eq-p-t}
(|u'|^{p(t)-2}u')'+\lambda a(t)g(u)=0,
\end{equation}
where $p\colon\mathbb{R}\to\mathopen{]}1,+\infty\mathclose{[}$ is a continuous $T$-periodic function. Then, there exist $\underline{p},\overline{p}\in\mathbb{R}$ such that
\begin{equation}\label{cond-eq-p-t}
1 < \underline{p} \leq p(t) \leq \overline{p} < +\infty, \quad \text{for all $t\in\mathbb{R}$.}
\end{equation}
Equation \eqref{eq-p-t} corresponds to the planar system
\begin{equation*}
\begin{cases}
\, |u'|^{p(t)-2}u'=v, \\
\, v'=-\lambda a(t) g(u).
\end{cases}
\end{equation*}
From the first equation we obtain
\begin{equation*}
u' = h(t,v) = |v|^{\frac{2-p(t)}{p(t)-1}} v = |v|^{\frac{1}{p(t)-1}} \mathrm{sign}(v)
\end{equation*}
and thus $h$ satisfies hypotheses \ref{cond-h-0} and \ref{cond-h-2}.

We can thus state the following result.

\begin{theorem}\label{th-5.3}
Let $p\colon\mathbb{R}\to\mathopen{]}1,+\infty\mathclose{[}$ be a continuous $T$-periodic function. Let $a\colon\mathbb{R}\to\mathbb{R}$ be a locally integrable $T$-periodic function satisfying \eqref{eq:cond-a-diesis} and \ref{cond-a-star}. Let $g\colon\mathopen{[}0,+\infty\mathclose{[}\to\mathopen{[}0,+\infty\mathclose{[}$ be a continuous function, regularly oscillating at zero, satisfying
\begin{equation}\label{cond-th5.3-0}
\limsup_{s\to 0^{+}} \dfrac{g(s)s}{s^{\overline{p}}} < +\infty
\end{equation}
and 
\begin{equation}\label{cond-th5.3-infinity}
\liminf_{s\to +\infty} \dfrac{g(s)s}{s^{\overline{p}}} >0,
\end{equation}
where $\overline{p}$ is defined as in \eqref{cond-eq-p-t}.
Then, for every $\lambda>0$, \eqref{eq-p-t} has at least one positive $T$-periodic solution.
\end{theorem}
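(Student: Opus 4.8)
The plan is to recognize \eqref{eq-p-t} as an instance of system \eqref{eq:system-S} and to invoke Theorem~\ref{th-main-2}. As already observed, setting $v := |u'|^{p(t)-2}u'$ turns \eqref{eq-p-t} into the planar system whose first equation is $u' = h(t,v)$, with
\[
h(t,s) = |s|^{\frac{1}{p(t)-1}}\,\mathrm{sign}(s).
\]
Since $p$ is continuous and $T$-periodic with $1 < \underline{p} \le p(t) \le \overline{p}$, the map $h$ is continuous, $T$-periodic in $t$, vanishes at $s=0$, and is strictly increasing in $s$; hence \ref{cond-h-0} and \ref{cond-h-2} hold. The hypotheses on $a$ and $g$ required by Theorem~\ref{th-main-2} are assumed directly, except for the two growth conditions \eqref{main-cond-zero} and \eqref{main-cond-infinity}, whose verification is the entire content of the proof.

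First I would treat \eqref{main-cond-zero}. The case $K=0$ is immediate since $h(t,0)=0$. For $K \ne 0$ one has $h(t,Kg(s)) = \mathrm{sign}(K)\,(|K|g(s))^{1/(p(t)-1)}$, so that
\[
\frac{|h(t,Kg(s))|}{s} = |K|^{1/(p(t)-1)}\,\frac{g(s)^{1/(p(t)-1)}}{s}.
\]
The key observation is that, as $s\to 0^{+}$, the base $|K|g(s)$ is eventually less than $1$, and on $\mathopen{]}0,1\mathclose{[}$ the power $b \mapsto b^{1/(p(t)-1)}$ is largest at the smallest exponent, that is, at $p(t)=\overline{p}$. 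This lets me replace the $t$-dependent exponent by the extremal value $1/(\overline{p}-1)$ and obtain a bound that is uniform in $t$. Condition \eqref{cond-th5.3-0}, which controls $g(s)$ against $s^{\overline{p}-1}$, then forces $g(s)^{1/(\overline{p}-1)}/s$ to vanish, giving \eqref{main-cond-zero} with the correct sign for each sign of $K$.

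The verification of \eqref{main-cond-infinity} is dual. For $K\ne 0$ one has $\mathrm{sign}(K)\,h(t,Kg(s))/s = (|K|g(s))^{1/(p(t)-1)}/s$, and now, as $s\to+\infty$, the base exceeds $1$, so the power is smallest at the smallest exponent, again $1/(\overline{p}-1)$. Bounding the $t$-dependent exponent from below by this extremal value reduces the uniform estimate to $(|K|g(s))^{1/(\overline{p}-1)}/s$, and condition \eqref{cond-th5.3-infinity} makes this tend to $+\infty$. With \eqref{main-cond-zero} and \eqref{main-cond-infinity} in hand, Theorem~\ref{th-main-2} applies and yields, for every $\lambda>0$, a $T$-periodic solution $(u,v)$ with $u>0$; since this $u$ is precisely the sought solution of \eqref{eq-p-t}, the proof is complete.

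The main obstacle is the uniformity in $t$: because the exponent $1/(p(t)-1)$ varies with $t$, one cannot read the limits off a single power law, and the argument hinges on isolating the correct extremal exponent $\overline{p}$ on the regime where the base lies below, respectively above, $1$. The sign bookkeeping for positive and negative $K$ is then routine once this $\overline{p}$-reduction is set up.
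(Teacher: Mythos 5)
Your reduction of \eqref{eq-p-t} to system \eqref{eq:system-S} and the plan to conclude via Theorem~\ref{th-main-2} coincide with the paper's strategy, and your exponent-comparison step is correct as far as it goes: since $1/(p(t)-1)\geq 1/(\overline{p}-1)$ for all $t$, one has $b^{1/(p(t)-1)}\leq b^{1/(\overline{p}-1)}$ for $0<b<1$ and $b^{1/(p(t)-1)}\geq b^{1/(\overline{p}-1)}$ for $b>1$, uniformly in $t$. The genuine gap lies in the two final deductions. Condition \eqref{cond-th5.3-0} only provides a constant $C>0$ with $g(s)\leq C s^{\overline{p}-1}$ for $s$ small, so your uniform bound becomes
\[
\frac{(|K|g(s))^{1/(\overline{p}-1)}}{s}\leq \bigl(|K|C\bigr)^{1/(\overline{p}-1)},
\]
which is bounded but does \emph{not} tend to $0$: a limsup-finite hypothesis cannot ``force $g(s)^{1/(\overline{p}-1)}/s$ to vanish''. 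Indeed, $g(s)=s^{\overline{p}-1}$ satisfies \eqref{eq:cond-g-star}, is regularly oscillating at zero, and satisfies \eqref{cond-th5.3-0}, yet $g(s)^{1/(\overline{p}-1)}/s\equiv 1$. Symmetrically, \eqref{cond-th5.3-infinity} only gives $g(s)\geq c\,s^{\overline{p}-1}$ for large $s$, whence $(|K|g(s))^{1/(\overline{p}-1)}/s\geq (|K|c)^{1/(\overline{p}-1)}$, a positive constant rather than $+\infty$. So, as written, neither \eqref{main-cond-zero} nor \eqref{main-cond-infinity} is established.

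The paper avoids this trap precisely by \emph{not} freezing the exponent at its extremal value: it keeps $1/(p(t)-1)$ and uses the identity
\[
\frac{g(s)^{\frac{1}{p(t)-1}}}{s}
= \biggl(\frac{g(s)s}{s^{\overline{p}}}\biggr)^{\frac{1}{p(t)-1}} s^{\frac{\overline{p}-p(t)}{p(t)-1}},
\]
in which the hypotheses control the first factor (uniform boundedness near $0$, uniform positive lower bound near $+\infty$), while the leftover power $s^{(\overline{p}-p(t))/(p(t)-1)}$ is what supplies the convergence to $0$ (resp.\ to $+\infty$) wherever $p(t)<\overline{p}$. Your substitution of $\overline{p}$ for $p(t)$ is exactly the degenerate case in which this compensating power collapses to $s^{0}=1$; you have discarded the only source of decay/growth that the weak hypotheses \eqref{cond-th5.3-0}--\eqref{cond-th5.3-infinity} leave available, which is why your bound terminates in a constant. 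To repair the argument, estimate in the order the paper does: factor out $\bigl(g(s)s/s^{\overline{p}}\bigr)^{1/(p(t)-1)}$ first, bound it uniformly in $t$ using the monotonicity of $b\mapsto b^{x}$ exactly as you did, and only then let the explicit power of $s$ produce the limit. (Be aware that at points where $p(t)=\overline{p}$ this compensating power also equals $1$, so in that borderline situation the conclusion really requires reading the hypotheses as full limits, $g(s)s/s^{\overline{p}}\to 0$ and $g(s)s/s^{\overline{p}}\to+\infty$.)
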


\begin{proof}
In order to verify hypothesis \eqref{main-cond-zero} we are going to show that
\begin{equation*}
\lim_{s\to 0^{+}} \dfrac{g(s)^{\frac{1}{p(t)-1}}}{s} = 0,
\quad \text{uniformly in $t$.}
\end{equation*}
This is a consequence of \eqref{cond-th5.3-0} and
\begin{equation*}
\dfrac{g(s)^{\frac{1}{p(t)-1}}}{s} 
= \biggl{(}\dfrac{g(s)s}{s^{\overline{p}}}\biggr{)}^{\frac{1}{p(t)-1}} s^{\frac{\overline{p}-p(t)}{p(t)-1}},
\quad \text{for all $s>0$.}
\end{equation*}
Analogously, from \eqref{cond-th5.3-infinity} and the above equality, we have
\begin{equation*}
\lim_{s\to +\infty} \dfrac{g(s)^{\frac{1}{p(t)-1}}}{s} = +\infty,
\quad \text{uniformly in $t$,}
\end{equation*}
thus hypothesis \eqref{main-cond-infinity} is verified. Then, we can apply Theorem~\ref{th-main-2} to reach the thesis.
\end{proof}

\subsection{Minkowski curvature operator}\label{section-5.3}

As a last example, we investigate the positive $T$-periodic solutions of the Minkowski curvature equation
\begin{equation}\label{eq-Mink}
\biggl{(} \dfrac{u'}{\sqrt{1-(u')^{2}}} \biggr{)}'+\lambda a(t)g(u)=0,
\end{equation}
which corresponds to the planar system
\begin{equation*}
\begin{cases}
\, u'=\dfrac{v}{\sqrt{1+v^{2}}}, \\
\, v'=-\lambda a(t) g(u).
\end{cases}
\end{equation*}
In this case, $h(t,s)= s / \sqrt{1+s^{2}}$, thus \ref{cond-h-0} and \ref{cond-h-2} are trivially satisfied.

Then, we have the following corollary of Theorem~\ref{th-main}.

\begin{theorem}\label{th-5.4}
Let $a\colon\mathbb{R}\to\mathbb{R}$ be a locally integrable $T$-periodic function satisfying \eqref{eq:cond-a-diesis} and \ref{cond-a-star}. Let $g\colon\mathopen{[}0,+\infty\mathclose{[}\to\mathopen{[}0,+\infty\mathclose{[}$ be a continuous function, regularly oscillating at zero, satisfying \eqref{eq:cond-g-star} and 
\begin{equation}\label{eq-cond-mink}
\lim_{s\to 0^{+}} \dfrac{g(s)}{s} = 0.
\end{equation}
Then, there exists $\lambda^{*}>0$ such that for every $\lambda>\lambda^{*}$ equation \eqref{eq-Mink} has at least one positive $T$-periodic solution.
\end{theorem}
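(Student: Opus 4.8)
The plan is to obtain the result as a direct corollary of Theorem~\ref{th-main}, by checking that all of its hypotheses are in force. The assumptions imposed on $a$ and on $g$ are literally those of Theorem~\ref{th-main}, and it was already noted above that the choice $h(t,s)=s/\sqrt{1+s^{2}}$ fulfils \ref{cond-h-0} (since $h(t,0)=0$) and \ref{cond-h-2} (since $s\mapsto s/\sqrt{1+s^{2}}$ is strictly increasing). Consequently, the single nontrivial point I need to verify is the generalized superlinearity-at-zero condition \eqref{main-cond-zero}.

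First I would exploit that here $h$ does not depend on the time variable, so that the uniformity in $t$ required by \eqref{main-cond-zero} is automatic and it suffices to study the one-variable map $\xi\mapsto h(t,\xi)=\xi/\sqrt{1+\xi^{2}}$. The crucial elementary estimate is
\begin{equation*}
|h(t,\xi)| = \dfrac{|\xi|}{\sqrt{1+\xi^{2}}} \leq |\xi|, \quad \text{for all $\xi\in\mathbb{R}$,}
\end{equation*}
coming simply from $\sqrt{1+\xi^{2}}\geq 1$. Evaluating at $\xi=Kg(s)$ for $s>0$ then gives
\begin{equation*}
0\leq \dfrac{|h(t,Kg(s))|}{s} \leq |K|\,\dfrac{g(s)}{s}.
\end{equation*}
The case $K=0$ is immediate from $h(t,0)=0$; for $K\neq 0$, letting $s\to 0^{+}$ and invoking hypothesis \eqref{eq-cond-mink} forces the right-hand side to vanish, which yields \eqref{main-cond-zero}. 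With this in hand, Theorem~\ref{th-main} applies and produces the threshold $\lambda^{*}>0$ together with the positive $T$-periodic solutions for $\lambda>\lambda^{*}$.

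I do not expect any genuine obstacle in this argument; the only conceptual remark worth making is that the Minkowski operator yields a \emph{bounded} $h$ (indeed $|h(t,\xi)|<1$), so the superlinearity-at-infinity condition \eqref{main-cond-infinity} is structurally impossible and Theorem~\ref{th-main-2} is not available. This is exactly why the conclusion can only be asserted for $\lambda$ sufficiently large and not for every $\lambda>0$, in contrast to the $\phi$-Laplacian and $p(t)$-Laplacian cases treated above.
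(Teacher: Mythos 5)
Your proof is correct and follows essentially the same route as the paper: the paper likewise reduces the statement to Theorem~\ref{th-main} by observing that \eqref{eq-cond-mink} implies \eqref{main-cond-zero} for $h(t,s)=s/\sqrt{1+s^{2}}$, which is precisely your estimate $|h(t,Kg(s))|/s \leq |K|\,g(s)/s$ coming from $\sqrt{1+\xi^{2}}\geq 1$. Your closing remark on the boundedness of $h$ ruling out \eqref{main-cond-infinity} correctly explains why only the large-$\lambda$ conclusion is available here.
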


\begin{proof}
We notice that hypothesis \eqref{eq-cond-mink} implies that 
\begin{equation*}
\lim_{s\to 0^{+}} \dfrac{K g(s)}{s\sqrt{1+(Kg(s))^{2}}} = 0,
\quad \text{for every $K\in\mathbb{R}$,}
\end{equation*}
which is exactly \eqref{main-cond-zero}. 
Then, Theorem~\ref{th-main} applies and the proof is concluded.
\end{proof}

\appendix
\section{Maximum principles for planar systems}\label{appendix-A}

In this section, we present some maximum principles for the planar system
\begin{equation}\label{syst-max-p}
\begin{cases}
\, u' = h(t,v),
\\
\, v' = k(t,u),
\end{cases}
\end{equation}
where $h\colon\mathbb{R}\times\mathbb{R}\to\mathbb{R}$ is a continuous function, $T$-periodic in the first variable, and such that
\begin{enumerate}[leftmargin=25pt,labelsep=10pt,label=\textup{$(i)$}]
\item $h(t,0)=0$ for every $t\in\mathbb{R}$;
\label{cond-app-i}
\end{enumerate}
\begin{enumerate}[leftmargin=25pt,labelsep=10pt,label=\textup{$(ii)$}]
\item there exist $\underline{h},\overline{h} \colon \mathbb{R} \to \mathbb{R}$ continuous, with $\overline{h}$ monotone increasing, and such that $0 \leq \underline{h}(s)s \leq h(t,s)s \leq \overline{h}(s)s$, for almost every $t\in \mathbb{R}$ and for all $|s|\leq \eta$, for some $\eta>0$;
\label{cond-app-iii}
\end{enumerate}
Let $\underline{H}(s):=\int_{0}^{s}\underline{h}(\xi) \,\mathrm{d}\xi
$ and denote by $\underline{H}^{-1}_{\mathrm{l}}$ and $\underline{H}^{-1}_{\mathrm{r}}$ the left and right inverse of $\underline{H}$, respectively.

We assume that $k\colon\mathbb{R}\times\mathbb{R}\to\mathbb{R}$ is an $L^{1}$-Carath\'{e}odory function, $T$-periodic in the first variable.

We first present a weak maximum principle.

\begin{proposition}[Weak maximum principle]\label{weak-max-p}
Let $h\colon\mathbb{R}\times\mathbb{R}\to\mathbb{R}$ be a continuous function, $T$-periodic in the first variable, and satisfying \ref{cond-app-i}. Let $k\colon\mathbb{R}\times\mathbb{R}\to\mathbb{R}$ be an $L^{1}$-Carath\'{e}odory function, $T$-periodic in the first variable, and such that
\begin{equation*}
k(t,s)<0, \quad \text{for a.e.~$t\in\mathbb{R}$, for all $s\in\mathopen{]}-\infty,0\mathclose{[}$.}
\end{equation*}
If $(u,v)$ is a $T$-periodic solution of \eqref{syst-max-p}, then $u(t)\geq0$ for all $t\in\mathbb{R}$.
\end{proposition}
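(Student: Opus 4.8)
The plan is to argue by contradiction, exploiting that a continuous $T$-periodic function attains its global minimum. Suppose the conclusion fails, so that $\min_{t\in\mathbb{R}} u = u(t_{0}) < 0$ for some $t_{0}$ (the minimum is attained on one period by periodicity and continuity). Since $u'=h(\cdot,v)$ is continuous we have $u\in\mathcal{C}^{1}(\mathbb{R})$, and because $t_{0}$ is a global minimum, $u'(t_{0})=0$; by the first equation in \eqref{syst-max-p} this reads $h(t_{0},v(t_{0}))=0$. The first key step is to deduce $v(t_{0})=0$. Here I would invoke the sign structure of $h$ coming from \ref{cond-app-i} together with the standing structural hypothesis on $h$ (namely that $h(t,\cdot)$ vanishes only at the origin and shares the sign of its argument, which in the applications is guaranteed by \ref{cond-h-0}--\ref{cond-h-2}): the identity $h(t_{0},v(t_{0}))=0$ then forces $v(t_{0})=0$.

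Next I would localize around $t_{0}$. Since $u(t_{0})<0$ and $u,v$ are continuous, I would fix $\varepsilon>0$ so small that $u(t)<0$ and $|v(t)|\le\eta$ for all $t\in\mathopen{]}t_{0}-\varepsilon,t_{0}+\varepsilon\mathclose{[}$, with $\eta>0$ as in the structural hypothesis. On this interval the one-sided sign condition on $k$ yields $v'(t)=k(t,u(t))<0$ for a.e.~$t$, so that $v$ is absolutely continuous and strictly decreasing there (this is exactly where the $L^{1}$-Carath\'eodory regularity of $k$ is used). Combined with $v(t_{0})=0$, strict monotonicity gives $v(t)>0$ for every $t\in\mathopen{]}t_{0}-\varepsilon,t_{0}\mathclose{[}$.

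Finally I would feed this back into the first equation to contradict minimality. For $t\in\mathopen{]}t_{0}-\varepsilon,t_{0}\mathclose{[}$ we have $0<v(t)\le\eta$, whence $u'(t)=h(t,v(t))>0$ by the sign property of $h$. Thus $u$ is strictly increasing on $\mathopen{]}t_{0}-\varepsilon,t_{0}\mathclose{[}$, so $u(t)<u(t_{0})$ for $t$ slightly smaller than $t_{0}$, contradicting that $u(t_{0})$ is the global minimum. This contradiction shows $u(t)\ge0$ for all $t\in\mathbb{R}$.

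I expect the only genuinely delicate point to be the sign property of $h$: that $h(t,s)$ vanishes exactly at $s=0$ and has the same sign as $s$ for small $|s|$. This is precisely the structural input that lets us pass from $u'(t_{0})=0$ to $v(t_{0})=0$, and then from $v>0$ back to $u'>0$; a merely weak inequality $h(t,s)s\ge0$ would be insufficient, as it would allow $h(t,\cdot)$ to vanish away from the origin and break the argument. The remaining ingredients---that $v$ is monotone where $u<0$, and that the whole argument is confined to a small neighborhood of $t_{0}$ (so that the restriction $|s|\le\eta$ in \ref{cond-app-iii} is harmless)---are routine.
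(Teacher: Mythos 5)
Your proof is correct, but it follows a genuinely different route from the paper's. The paper argues globally: it first rules out the case $u<0$ everywhere (integrating $v'=k(t,u)$ over a period would give $0<0$), then takes a \emph{maximal} interval $\mathopen{]}t_{0},t_{1}\mathclose{[}$ on which $u<0$, infers from $u'(t_{0})\leq 0\leq u'(t_{1})$ and the sign structure of $h$ that $v(t_{0})\leq 0\leq v(t_{1})$, and reaches the contradiction
\begin{equation*}
0 \leq v(t_{1})-v(t_{0}) = \int_{t_{0}}^{t_{1}} k(t,u(t))\,\mathrm{d}t < 0.
\end{equation*}
You instead argue locally at a global minimum point $t_{0}$ of $u$: there $u'(t_{0})=0$ forces $v(t_{0})=0$, the sign of $k$ makes $v$ strictly decreasing near $t_{0}$, hence $v>0$ and therefore $u'>0$ just to the left of $t_{0}$, contradicting minimality. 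Your version dispenses with both the maximal-interval construction and the preliminary ``not everywhere negative'' step, and is arguably more elementary; the paper's version has the small advantage of needing sign information on $h$ only at the two endpoints rather than on a one-sided neighborhood. Concerning the point you flag as delicate: you are right, and it is not a defect of your argument relative to the paper's. Both proofs rest on the same implicit strengthening of \ref{cond-app-i}, namely the strict sign condition $h(t,s)s>0$ for $s\neq 0$ (condition \eqref{cond-h-1}, guaranteed in the applications by \ref{cond-h-0} and \ref{cond-h-2}). Indeed, the paper's own step ``$h(t_{0},v(t_{0}))\leq 0$, therefore by \ref{cond-app-i} we have $v(t_{0})\leq 0$'' is not justified by $h(t,0)=0$ alone: taking $h(t,s)=-s$, $k(t,s)=s$, $T=2\pi$, the pair $(u,v)=(\cos t,\sin t)$ satisfies every hypothesis as literally stated in the proposition, yet $u$ changes sign. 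So the strict sign structure of $h$ is genuinely needed by both arguments, and your proof, which makes this explicit, is sound under the same (intended) hypotheses as the paper's.
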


\begin{proof}
Let $(u,v)$ be a $T$-periodic solution of \eqref{syst-max-p}. By contradiction, we suppose that there exists $t^{*}\in\mathbb{R}$ such that $u(t^{*})<0$. First of all, we observe that, if $u(t)<0$ for all $t\in\mathbb{R}$, then
\begin{equation*}
0 = v(T)-v(0) = \int_{0}^{T} v'(t) \,\mathrm{d}t 
= \int_{0}^{T} k(t,u(t)) \,\mathrm{d}t
< 0,
\end{equation*}
which is a contradiction. Therefore, $u$ is non-negative in some points.
Let $\mathopen{]}t_{0},t_{1}\mathclose{[}\subseteq \mathbb{R}$ be the maximal interval containing $t^{*}$ such that $u(t)<0$ for all $t\in\mathopen{]}t_{0},t_{1}\mathclose{[}$.
By the $T$-periodicity, we observe that the interval is bounded and, by the continuity of $u$ and $u'$, we have $u(t_{0})=u(t_{1})=0$ and $u'(t_{0}) \leq 0 \leq u'(t_{1})$.
From the first equation in \eqref{syst-max-p} we deduce that $h(t_{0},v(t_{0}))=u'(t_{0})\leq 0$ and $h(t_{1},v(t_{1}))=u'(t_{1})\geq 0$, therefore by \ref{cond-app-i} we have $v(t_{0}) \leq 0 \leq v(t_{1})$. As a consequence, we have
\begin{equation*}
0 \leq v(t_{1})-v(t_{0}) = \int_{t_{0}}^{t_{1}} v'(t) \,\mathrm{d}t 
= \int_{t_{0}}^{t_{1}} k(t,u(t)) \,\mathrm{d}t
< 0,
\end{equation*}
a contradiction. The proof is complete.
\end{proof}

Secondly we present the following strong maximum principle.

\begin{proposition}[Strong maximum principle]\label{strong-max-time-maps}
Let $h\colon\mathbb{R}\times\mathbb{R}\to\mathbb{R}$ be a continuous function, $T$-periodic in the first variable, and satisfying \ref{cond-app-i} and \ref{cond-app-iii}. Let $k\colon\mathbb{R}\times\mathbb{R}\to\mathbb{R}$ be an $L^{1}$-Carath\'{e}odory function, $T$-periodic in the first variable. Assume that
\begin{enumerate}[leftmargin=27pt,labelsep=10pt,label=\textup{$(iii)$}]
\item there exists $\overline{k}\colon\mathopen{[}0,\gamma\mathclose{]}\to\mathopen{[}0,+\infty\mathclose{[}$ continuous and such that $0 \leq k(t,s)\leq \overline{k}(s)$, for almost every $t\in \mathbb{R}$ and for all $s\in\mathopen{[}0,\gamma\mathclose{]}$, for some $\gamma>0$, and moreover
$\overline{K}(u):=\int_{0}^{u}\overline{k}(s) \,\mathrm{d}s>0$, for all $u\in\mathopen{]}0,\gamma\mathclose{]}$.
\label{cond-app-iv}
\end{enumerate}
If at least one of the integrals
\begin{equation*}
\int_{0}^{\varepsilon}\frac{\mathrm{d}u}{\overline{h}(\underline{H}^{-1}_{\mathrm{l}}(\overline{K}(u)))}, 
\quad
\int_{0}^{\varepsilon}\frac{\mathrm{d}u}{\overline{h}(\underline{H}^{-1}_{\mathrm{r}}(\overline{K}(u)))}
\end{equation*}
diverges (for every $\varepsilon>0$ sufficiently small), then every non-trivial $T$-periodic solution $(u,v)$ of \eqref{syst-max-p} with $u(t)\geq 0$ for all $t\in\mathbb{R}$, satisfies $u(t)>0$ for all $t\in\mathbb{R}$.
\end{proposition}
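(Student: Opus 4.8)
The plan is to argue by contradiction: suppose $(u,v)$ is a non-trivial $T$-periodic solution with $u\geq0$ but $u(t_{0})=0$ for some $t_{0}$. Since $u\not\equiv0$ (otherwise $v'=k(t,0)\geq0$ forces $k(t,0)=0$ a.e.\ and then $(u,v)\equiv(0,0)$), the set $\{u>0\}$ is a non-empty proper open subset of $\mathbb{R}/T\mathbb{Z}$; each of its components $\mathopen{]}a,b\mathclose{[}$ supplies a \emph{left} endpoint $a$ (positivity on the right) and a \emph{right} endpoint $b$ (positivity on the left). I would then pick the endpoint matching the divergent integral: if the right-inverse integral diverges I take $t_{0}=a$ and work forward on $\mathopen{[}t_{0},t_{0}+\delta\mathclose{]}$, and if the left-inverse one diverges I take $t_{0}=b$ and run the symmetric backward argument. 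In the forward case, $u(t_{0})=0$ is an interior minimum so $u'(t_{0})=0$, whence $h(t_{0},v(t_{0}))=0$ and thus $v(t_{0})=0$ by \ref{cond-app-i} and \ref{cond-app-iii}. Shrinking $\delta$ so that $0\le u\le\min\{\eta,\gamma\}$ there, hypothesis \ref{cond-app-iv} gives $v'=k(t,u)\geq0$, hence $v\geq v(t_{0})=0$, and then $u'=h(t,v)\geq\underline{h}(v)\geq0$, so $u$ is non-decreasing on $\mathopen{[}t_{0},t_{0}+\delta\mathclose{]}$.

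The core is an \emph{energy comparison}. For $u,v\geq0$ small, multiplying the two pointwise bounds $0\le\underline{h}(v)\le h(t,v)$ (from \ref{cond-app-iii}) and $0\le k(t,u)\le\overline{k}(u)$ (from \ref{cond-app-iv}) yields $\underline{h}(v)\,k(t,u)\le \overline{k}(u)\,h(t,v)$, so that
\[
\frac{\mathrm{d}}{\mathrm{d}t}\underline{H}(v(t)) = \underline{h}(v(t))\,k(t,u(t)) \le \overline{k}(u(t))\,h(t,v(t)) = \frac{\mathrm{d}}{\mathrm{d}t}\overline{K}(u(t)), \quad \text{a.e.}
\]
Integrating from $t_{0}$, where both primitives vanish, gives $\underline{H}(v(t))\le\overline{K}(u(t))$ on $\mathopen{[}t_{0},t_{0}+\delta\mathclose{]}$. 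Since $\underline{H}$ is non-decreasing on $\mathopen{[}0,+\infty\mathclose{[}$, this inverts to $v(t)\le\underline{H}^{-1}_{\mathrm{r}}(\overline{K}(u(t)))$ (the right inverse absorbing any flat part of $\underline{H}$), and monotonicity of $\overline{h}$ then produces the scalar differential inequality
\[
u'(t) = h(t,v(t)) \le \overline{h}(v(t)) \le \overline{h}\bigl(\underline{H}^{-1}_{\mathrm{r}}(\overline{K}(u(t)))\bigr) =: F(u(t)),
\]
where $F(u)>0$ for $u\in\mathopen{]}0,\gamma\mathclose{]}$ because $\overline{K}(u)>0$ there.

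Finally I would integrate this as a time map. As $u$ is non-decreasing and $C^{1}$ with $F(u)>0$ on the range of positive values, the change of variables $\xi=u(t)$ gives
\[
\int_{0}^{u(t_{0}+\delta)} \frac{\mathrm{d}\xi}{F(\xi)} = \int_{t_{0}}^{t_{0}+\delta} \frac{u'(t)}{F(u(t))}\,\mathrm{d}t \le \int_{t_{0}}^{t_{0}+\delta} 1 \,\mathrm{d}t = \delta < +\infty,
\]
which contradicts the assumed divergence of $\int_{0}^{\varepsilon}\mathrm{d}\xi/\overline{h}(\underline{H}^{-1}_{\mathrm{r}}(\overline{K}(\xi)))$, since $u(t_{0}+\delta)>0$. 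The backward case is entirely analogous after reflecting in time, and there it is the integral with $\underline{H}^{-1}_{\mathrm{l}}$ that is used; hence divergence of \emph{either} integral suffices. I expect the main obstacle to be the energy comparison step—specifically, justifying that the one-sided primitive inequality $\underline{H}(v)\le\overline{K}(u)$ holds with both quantities vanishing at $t_{0}$ (which requires the sign analysis pinning down $v(t_{0})=0$ and $v\geq0$), and then selecting the correct one-sided inverse $\underline{H}^{-1}_{\mathrm{r}}$ or $\underline{H}^{-1}_{\mathrm{l}}$ so that the Osgood-type integrand dominates $u'/F(u)$ on the appropriate side; the concluding time-map estimate is then routine.
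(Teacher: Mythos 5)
Your proof is correct and follows essentially the same route as the paper's: the same sign analysis at the vanishing point ($v(t_{0})=0$, $v\geq 0$, $u$ non-decreasing on $\mathopen{[}t_{0},t_{0}+\delta\mathclose{]}$), the same energy comparison $\underline{H}(v(t))\leq\overline{K}(u(t))$ obtained by differentiating along solutions, the same inversion through $\underline{H}^{-1}_{\mathrm{r}}$ (resp.\ $\underline{H}^{-1}_{\mathrm{l}}$) combined with the monotonicity of $\overline{h}$, and the same Osgood-type time-map integration to contradict the divergence hypothesis. Your explicit selection of a left or right endpoint of a component of $\{u>0\}$ to match whichever integral diverges is precisely the observation the paper records separately in Remark~\ref{rem-A.1}, so the only (trivially fixable) slip is that $\delta$ should also be shrunk so that $|v|\leq\eta$ on the interval, since hypothesis \ref{cond-app-iii} bounds $h(t,s)$ only for $|s|\leq\eta$.
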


Observe that the function $\underline{H}$ is strictly decreasing on $\mathopen{[}-\eta,0\mathclose{]}$ and strictly increasing on $\mathopen{[}0,\eta\mathclose{]}$, then for $c:=\min\{\underline{H}(-\eta),\underline{H}(\eta)\}$ the left and right inverse $\underline{H}^{-1}_{\mathrm{l}}$ and $\underline{H}^{-1}_{\mathrm{r}}$ are defined on $\mathopen{[}0,c\mathclose{]}$ with range contained in $\mathopen{[}-\eta,0\mathclose{]}$ and in $\mathopen{[}0,\eta\mathclose{]}$, respectively.
Hence, for $\varepsilon\leq\gamma$ sufficiently small such that $\overline{K}(\varepsilon)\leq c$, we have 
\begin{equation}\label{cond-integrals}
\overline{h}(\underline{H}^{-1}_{\mathrm{l}}(\overline{K}(s)))<0<\overline{h}(\underline{H}^{-1}_{\mathrm{r}}(\overline{K}(s))),
\quad \text{for all $s\in\mathopen{]}0,\varepsilon\mathclose{]}$}.
\end{equation}

\begin{proof}
By contradiction, we suppose that $(u,v)$ is a non-trivial $T$-periodic solution of \eqref{syst-max-p} with $u(t)\geq 0$ for all $t\in\mathbb{R}$ and such that $u$ vanishes somewhere.
Let $t_{0}\in \mathbb{R}$ be such that $u(t_{0})=0$. Without loss of generality, by the periodicity of $u$, we can suppose that $t_{0}$ is such that $u(t)>0$ in $\mathopen{]}t_{0},t_{0}+\delta\mathclose{]}$ or in $\mathopen{[}t_{0}-\delta,t_{0}\mathclose{[}$, for some $\delta>0$ with $u(t_{0}\pm\delta)\leq\varepsilon$ and $|v(t_{0}\pm\delta)|\leq\eta$. 

Assume that the first situation occurs. We observe that $u'(t_{0})=0$ (since $u$ is non-negative), $v(t_{0})=0$ (by \ref{cond-app-i}, \ref{cond-app-iii}, and the first equation in \eqref{syst-max-p}). Moreover, $v'(t)=k(t,u(t))\geq0$ for a.e.~$t\in \mathopen{[}t_{0},t_{0}+\delta\mathclose{]}$ and so $0\leq v(t)\leq v(t_{0}+\delta)$ for all~$t\in \mathopen{[}t_{0},t_{0}+\delta\mathclose{]}$. Furthermore, recalling hypothesis \ref{cond-app-iv}, we deduce that
\begin{align*}
\underline{h}(v(t)) v'(t) - \overline{k}(u(t)) u'(t)
&= \underline{h}(v(t)) k(t, u(t)) - \overline{k}(u(t)) h(t,v(t))
\\
&\leq - \overline{k}(u(t)) \bigl{(} h(t,v(t)) - \underline{h}(v(t)) \bigr{)} \leq 0,
\end{align*}
for almost every $t\in\mathopen{[}t_{0},t_{0}+\delta\mathclose{]}$, where the last inequality follows from \ref{cond-app-iii}.
Therefore, we have
\begin{equation*}
\dfrac{\mathrm{d}}{\mathrm{d}t} \bigl{(} \underline{H}(v(t)) - \overline{K}(u(t)) \bigr{)} \leq 0,
\quad \text{for a.e.~$t\in\mathopen{[}t_{0},t_{0}+\delta\mathclose{]}$,}
\end{equation*}
and so
\begin{equation*}
\underline{H}(v(t)) - \overline{K}(u(t)) \leq \underline{H}(v(t_{0})) - \overline{K}(u(t_{0})) = 0, 
\quad \text{for all $t\in\mathopen{[}t_{0},t_{0}+\delta\mathclose{]}$.}
\end{equation*}
Since $\underline{H}$ is strictly increasing on a right neighborhood of zero, we find
\begin{equation*}
0 \leq v(t) \leq \underline{H}^{-1}_{\mathrm{r}}(\overline{K}(u(t))),
\quad \text{for all $t\in\mathopen{[}t_{0},t_{0}+\delta\mathclose{]}$,}
\end{equation*}
and hence, by the monotonicity of $\overline{h}$, we find
\begin{equation*}
u'(t) = h(t,v(t)) \leq \overline{h}(v(t)) \leq \overline{h}(\underline{H}^{-1}(\overline{K}(u(t)))),
\quad \text{for all $t\in\mathopen{[}t_{0},t_{0}+\delta\mathclose{]}$.}
\end{equation*}
Observe that $\overline{h}(\underline{H}^{-1}(\overline{K}(u(t))))>0$ on $\mathopen{]}t_{0},t_{0}+\delta\mathclose{]}$ since $u(t)>0$ in the same interval.
Next, dividing by $\overline{h}(\underline{H}^{-1}(\overline{K}(u(t))))$, an integration leads to
\begin{equation*}
\int_{t}^{t_{0}+\delta} \dfrac{u'(\xi)}{\overline{h}(\underline{H}^{-1}(\overline{K}(u(\xi))))} \mathrm{d}\xi
\leq t_{0}+\delta-t,
\quad \text{for all $t\in\mathopen{]}t_{0},t_{0}+\delta\mathclose{]}$,}
\end{equation*}
and, by a change of variable, we have
\begin{equation*}
\int_{u(t)}^{u(t_{0}+\delta)} \dfrac{\mathrm{d}s}{\overline{h}(\underline{H}^{-1}(\overline{K}(s)))} 
\leq t_{0}+\delta-t,
\quad \text{for all $t\in\mathopen{]}t_{0},t_{0}+\delta\mathclose{]}$.}
\end{equation*}
At last, passing to the limit as $t\to (t_{0})^{+}$, we obtain
\begin{equation*}
\int_{0}^{u(t_{0}+\delta)} \dfrac{\mathrm{d}s}{\overline{h}(\underline{H}^{-1}(\overline{K}(s)))} 
\leq \delta,
\end{equation*}
a contradiction with respect to the hypothesis of the divergence of the integral at $0^{+}$.

If $u(t)>0$ in $\mathopen{[}t_{0}-\delta,t_{0}\mathclose{[}$, we reach a contradiction following a similar argument. The proof is complete.
\end{proof}

\begin{remark}\label{rem-A.1}
From an inspection of the proof, it is apparent the fact that if we assume by contradiction that a non-trivial $T$-periodic solution $(u,v)$ with $u\geq 0$ is such that $u$ vanishes at some point, then there are at least two points $t_{0},t_1$ such that $u(t)>0$ in $\mathopen{]}t_{0},t_{0}+\delta_{0}\mathclose{]}$ and $u(t)>0$ in $\mathopen{]}t_{1}-\delta_{1},t_{1}\mathclose{]}$, for some $\delta_{0},\delta_{1}>0$. Hence, the divergence of at least one of the two improper integrals in \eqref{cond-integrals} is sufficient to achieve a contradiction.
This is not the case for other boundary value problems, for instance the Neumann one, where the divergence of both improper integrals would be required.
\hfill$\lhd$
\end{remark}

\begin{remark}\label{rem-A.2}
The condition expressed in \eqref{cond-integrals} is sharp, as one can see from an analysis of the autonomous planar system
\begin{equation*}
\begin{cases}
\, u' = h(v),
\\
\, v'=k(u),
\end{cases}
\end{equation*}
with $h\colon\mathbb{R}\to\mathbb{R}$ a strictly increasing continuous function such that $h(0)=0$, and $k\colon\mathopen{[}0,+\infty\mathclose{[}\to\mathopen{[}0,+\infty\mathclose{[}$ a continuous function such that $k(0)=0$ and $k(s)>0$ for $s>0$. We observe that $h=\overline{h}=\underline{h}$ and $k=\overline{k}=\underline{k}$.
In this case, we have a Hamiltonian system with a geometry of saddle point at the origin and the level line $H(v) - K(u)=0$ splits as the union of the origin (equilibrium point), a stable manifold entering in the fourth quadrant and an unstable one in the first quadrant. In this case, the divergence of the integrals means that the solutions on the two manifolds do not hit the origin in finite time. The above autonomous system is related to the quasilinear second-order equation
\begin{equation*}
(\phi(u'))' = k(u)
\end{equation*}
for $\phi=h^{-1}$. In this case our condition turns out to be equivalent to the one involving the Legendre transform in \cite[Theorem~1.1.1]{PuSe-2007}, that is
\begin{equation*}
\int_{0^{+}} \dfrac{\mathrm{d}s}{\mathcal{H}^{-1}(K(s))} = \infty,
\end{equation*}
where $\mathcal{H}(s)=s\phi(s)-\int_{0}^{s} \phi(\xi)\,\mathrm{d}\xi$ (with the additional assumption in \cite{PuSe-2007} of monotonicity of $k$ in a right neighborhood of zero, an assumption which is not required in Proposition~\ref{strong-max-time-maps}).
\hfill$\lhd$
\end{remark}

\bibliographystyle{elsart-num-sort}
\bibliography{FeSPZa-biblio}

\end{document}